\numberwithin{equation}{section}\theoremstyle{definition}
 \newtheorem{Theorem}[equation]{Theorem}
 \newtheorem{Prop}[equation]{Proposition}
 \newtheorem{Lemma}[equation]{Lemma}
 \newtheorem{Cor}[equation]{Corollary}
 \newtheorem{Defn}[equation]{Definition}
 \newtheorem{Example}[equation]{Example}
 \newtheorem{Remark}[equation]{Remark}
\def\enumerate{\begingroup\ifnum\@enumdepth>3\@toodeep\else
      \advance\@enumdepth\@ne
      \edef\@enumctr{enum\romannumeral\the\@enumdepth}%
      \topsep\z@\parskip\z@
      \list{\csname label\@enumctr\endcsname}
        {\@nmbrlisttrue\let\@listctr\@enumctr
         \parsep\z@\itemsep\z@\topsep\z@
         \setcounter{\@enumctr}{0}
         \def\makelabel##1{\hss\llap{\rm ##1}}
       }\fi}
\def\({\big(}
\def\){\big)}
\def\C{\mathbb C}
\def\N{\mathbb N}
\def\Z{\mathbb Z}
\def\F{\mathbb F}
\def\E{\mathcal E}
\def\c{\mathbf C}
\def\O{\mathcal O}
\def\T{\mathcal T}
\def\cL{\mathcal L}
\def\cR{\mathcal R}
\def\p{\mathfrak p}
\DeclareMathOperator{\Hom}{Hom}
\DeclareMathOperator{\Ind}{Ind}
\DeclareMathOperator{\row}{row}
\DeclareMathOperator{\End}{End}
\DeclareMathOperator{\Pstab}{Pstab}
\DeclareMathOperator{\main}{main}
\DeclareMathOperator{\verge}{verge}
\DeclareMathOperator{\suppl}{suppl}
\DeclareMathOperator{\Irr}{Irr}
\begin{document}
\setlength{\abovedisplayskip}{4pt}
\setlength{\belowdisplayskip}{4pt}
\parindent=0pt


\title{Irreducible constituents of minimal degree in supercharacters of the finite unitriangular groups}

\author{Richard Dipper, Qiong Guo\\ \\Institut f\"{u}r Algebra und Zahlentheorie\\ Universit\"{a}t Stuttgart, 70569 Stuttgart, Germany
\\ \scriptsize{E-mail: richard.dipper@mathematik.uni-stuttgart.de, guo.qiong@mathematik.uni-stuttgart.de}
\setcounter{footnote}{-1}\footnote{\emph{Date:} December 12th, 2013.}
\setcounter{footnote}{-1}\footnote{\emph{2010 Mathematics Subject Classification.} Primary 20C15, 20D15. Secondary 20C33, 20D20 }
\setcounter{footnote}{-1}\footnote{\emph{Key words and phrases.} Unitriangular group, supercharacter, irreducible character.}}

\date{}


\maketitle

\begin{abstract}
Let $q$ be a prime power and $U$ the group of lower unitriangular matrices of order $n$ for some natural number $n$. We give a lower bound for the degrees of irreducible constituents of Andr\'{e}-Yan supercharacters and classify the supercharacters having constituents whose degree assume this lower bound. Moreover we show that the number of distinct irreducible characters of $U$ meeting this condition is a polynomial in $(q-1)$ with nonnegative integral coefficients and exhibit monomial sources for those.

\end{abstract}

\section{Introduction}
Let $p$ be a prime,  $q$  a power of $p$, $\F_q$ the finite field with $q$ elements and $U_n(q)$ ($n\in \N$) the group of lower unitriangular $(n\times n)$-matrices with entries in $\F_q$. Thus $U$ is a $p$-Sylow subgroup of the full general linear group $GL_n(q)$. It is known that determining the conjugacy classes of $U$ for all $n$ and $q$ is a wild problem. Even finding their number as function $C(n,q)$ of $q$ and $n$ and hence the number of distinct irreducible complex characters is still an open problem. A longstanding conjecture contributed to  G. Higman \cite{higman} states that $C(q,n)$ should be a polynomial in $q$ with integral coefficients depending only on $n$, not on $ q$. G. Lehrer refined this by conjecturing that the number of pairwise distinct irreducible characters of $U$  of a fixed degree $q^c, c\in \Z_{\geqslant 0}$, should be a polynomial in $q$ with integral coefficients \cite{lehrer}, which more recently was refined once more by Isaacs who conjectured that these polynomials should be actually polynomials in $(q-1)$ with nonnegative integral coefficients \cite{Isaacs2}. This, of course, uses the fact that the degrees of the complex irreducible characters of the $p$-group $U$ are powers of $q$ not just of $p$, by a result of Huppert \cite{huppert}, which is actually true for $\F_q$-algebra groups by a theorem of Isaacs \cite{Isaacsq}.
The Andr\'{e}-Yan supercharacter theory \cite{andre1}, \cite{yan} provides an approximation to the problem of classifying the irreducible complex characters of $U$. A supercharacter theory for some finite group $G$ consists of a set partition of the collection of conjugacy classes, the unions of the parts called superclasses, and a set of pairwise orthogonal complex characters, called supercharacters such that every irreducible complex character of $U$ occurs as constituent in precisely one supercharacter. Moreover superclasses and supercharacters are in 1-1 correspondence and supercharacters are constant on superclasses.

\smallskip

An $\F_q$-algebra group $G$ is of the form $G=\{1+x\,|\, x\in J(A)\}$ for some finite dimensional $\F_q$-algebra $A$ with Jacobson radical $J(A)$. Taking for $A$ the $\F_q$-algebra of lower triangular matrices, $V=J(A)$ is the nilpotent $\F_q$-algebra of strictly lower triangular matrices and $U=\{1+x\,|\,x\in J(A)\}$ is indeed an $\F_q$-algebra group. Moreover $U$ acts on $V=J(A)$ by left and right multiplication and hence on the set of linear complex characters $\hat V$ of the additive group $(V, +)$. The map $f: U\rightarrow V: u\mapsto u-1\in V$ induces an 1-cocycle $\alpha: \hat V\times U\longrightarrow \C ^*: (\chi, u)\mapsto \chi(u^{-1}-1)$ providing a right monomial action of $U$ on $\C \hat V$ with monomial basis $\hat V$. There is a similar left hand side construction for a monomial action of $U$ on $\hat V$ from the left. With this action from both sides $\C \hat V$ becomes an $U$-$U$-bimodule which is isomorphic to the regular bi-representation $_{\C U} \C U _{\C U}$. Each biorbit of $\hat V$ under the action of $U$ is a union of right orbits. It turns out that all right orbits in a biorbit induce isomorphic right modules, and any two right orbits being contained in different biorbits afford orthogonal characters. The different (and hence orthogonal) characters afforded by all the right orbits are the Andr\'{e}-Yan supercharacters.
These can be described combinatorially. We show that  there is for each right orbit a lower bound for the degrees of irreducible constituents occurring in the representation of $\C U$ on $\C \O$ for an orbit of $\hat V$ under the right action of $U$. We say the irreducible $\C U$-module $S$ has minimal dimension, if $\dim_\C S$ assumes this lower bound in the right orbit module of which it is an irreducible constituent.

\smallskip

Inspecting the endomorphism rings of right $U$-orbit modules we obtain a completely combinatorial necessary and sufficient condition for supercharacters having irreducible character of minimal degree as constituents. This is our first main result.
Moreover we show, those have multiplicity one in their supercharacters and show that there are $q^c$, $c\in \N$ many irreducible constituents of minimal degree in supercharacters, where $c\in \N$ is determined combinatorially. As a consequence we obtain that the number of distinct irreducible characters of $U$ of minimal degree in their supercharacters (of degree $q^d, d\in \Z_{\geqslant 0}$ fixed)  is a polynomial in $(q-1)$ with nonnegative integral coefficients.

\smallskip

By a theorem of Halasi \cite{halasi}, every irreducible character $\mu$ of $U$ is induced from a linear character $\lambda: H\rightarrow \C ^*=\C\setminus \{0\}$ for some $\F_q$-algebra subgroup $H$ of $U$. We call the pair $(H, \lambda)$ a monomial source of $\mu\in \Irr(U)$. Our second main result determines monomial sources of irreducible characters of $U$ of minimal degree in their supercharacters, (\ref{6.11}). These are in fact linear characters of certain pattern subgroups of $U$ and hence irreducible characters of minimal degree in their supercharacters are well-induced in the sense of Evseev \cite{Evseev}.

\smallskip

In her doctoral thesis \cite{guo} the second author determined the irreducible $U$-constituents of the permutation module of $GL_n(q)$ on the cosets of a maximal parabolic subgroup of $GL_n(q)$. In a forthcoming paper we shall show that these irreducible $\C U$-modules are precisely the irreducible constituents of minimal dimension in a certain combinatorially determined subclass of $U$-orbit modules having irreducible constituents of minimal degree.

\medskip

We now fix some notation which is used throughout this paper.
 We identify the set
$\Phi=\{(i,j)\,|\,1\leqslant i, j \leqslant n, i\not=j\}$ with
the standard root system of $G$ where
$\Phi^+=\{(i,j)\in \Phi \,|\,i>j\}$, $\,\Phi^-=\{(i,j)\in \Phi
\,|\,i<j\}$ are the positive respectively negative roots with
respect to the basis $\Delta=\{(i+1,i)\in
\Phi^+\,|\,1\leqslant i \leqslant n-1\}$ of $\Phi$.
A subset $J$ of $\Phi$ is {\bf closed} if $(i,j),(j,k)\in J, (i,k)\in \Phi$ implies
$(i,k)\in J.$
For $1\leqslant i, j \leqslant n$ let $\epsilon_{ij}$ be the
$n\times n$-matrix $g=(g_{ij})$ over $\F_q$, with $g_{ij}=1$ and
$g_{kl}=0$ for all $1\leqslant k,l \leqslant n$ with
$(k,l)\not=(i,j).$ Thus $\{\epsilon_{ij}\,|\, 1\leqslant i, j
\leqslant n \}$ is the natural basis
of the $\F_q$-algebra $M_n(\F_q)$ of $n\times n$-matrices with
entries in $\F_q.$
For $1\leqslant i, j \leqslant n,$\, $i\not=j$ and $\alpha\in
\F_q$, let $x_{ij}(\alpha)=E_n+\alpha \epsilon_{ij},$ where $E_n$
is   the $n\times n$-identity matrix. Then
$X_{ij}=\{x_{ij}(\alpha)\,|\,\alpha\in \F_q\}$ is the {\bf root
subgroup} of $G$ associated with
the root $(i,j)\in \Phi,$ and is isomorphic to the additive
group $(\F_q,+)$ of the underlying field $\F_q$, hence is in
particular abelian. Moreover $U=\langle
x_{ij}(\alpha)\,|\,1\leqslant j<i\leqslant n, \, \alpha\in
\F_q\rangle$ is the unitriangular subgroup of $G=GL_n(q)$
consisting of all lower triangular matrices with ones on the
diagonal. It is well known that for a closed subset
$J$ of $\Phi^+$, the set
$
U_J=\{u\in U\,|\,u_{ij}=0,\,\forall\, (i,j)\notin J\}$ is the
subgroup of $U$ generated by $X_{kl},\, (k,l)\in J$ and if we choose any linear ordering on $J$ then
$U_J=\{\prod_{(i,j)\in
J}x_{ij}(\alpha_{ij})\,|\,\alpha_{ij}\in \F_q\}$, where the
products are given in the fixed linear ordering.

\section{$U$-Supercharacters}
In \cite{yan} Yan constructed a basis of $\C U$, called Fourier basis, such that $U$ acts monomially (from both left and right) on it. We shall give here a brief overview on Yan's construction setting it up in a notation more suitable for our work here than the original one used by Yan. We shall use an approach introduced by Markus Jedlitschky in his thesis  \cite{markus} which produces Yan's Fourier basis. We begin with a very general setting.

For the moment let $G$ be an arbitrary group and $(V,+)$ be an abelian group on which $G$ acts as group of automorphisms, the action denoted by right multiplication. Let $K$ be a field. Then $G$ acts on $K^V$, the set of functions from $V$ to $K$ by
\begin{equation}\label{2.1}
 (\tau\ldotp g)(A)=\tau(A g^{-1}) \text{ for } \tau\in K^V, g\in G, A\in V. 
\end{equation}
In particular, the subset $\hat V=\Hom(V, K^*)$ of linear characters of $V$ in $K$ of $K^V$ is $G$-invariant, since for $\chi\in \hat V$, $A, B\in V$ and $g\in G$ we have
\begin{eqnarray*}
 (\chi\ldotp g)(A+B)&=&\chi((A+B))g^{-1}=\chi(Ag^{-1}+Bg^{-1})\\
&=& \chi(Ag^{-1})\chi(Bg^{-1})=(\chi\ldotp g)(A) \chi\ldotp g(B)
\end{eqnarray*}
proving that $\chi\ldotp g$ is again a linear character on $V$. Suppose now that $f: G\longrightarrow V$ is a (right sided) 1-cocycle, i.e. we have
\begin{equation}\label{cocycle}
 f(xg)=f(x)g+f(g) \quad\forall\, x,g\in G.
\end{equation}
We define $\alpha: \hat V \times G \longrightarrow K^*: (\chi, g)\mapsto \chi(f(g^{-1}))$ for $\chi\in \hat V, g\in G,$ then the following holds:
\begin{Theorem}[Jedlitschky \cite{markus}]\label{jed}
Let $K\hat V$ be the $K$-vector space with basis $\hat V$. Then $K \hat V$ becomes a $KG$-module with monomial basis $\hat V$, where the new action denoted by ``$*$'' of $G$ on $\hat V$ is given as
$$
\chi * g=\alpha(\chi,g) \chi\ldotp g=\chi(f(g^{-1}) )\chi \ldotp g
$$ 
for $\chi\in \hat V, g\in G$.
\end{Theorem}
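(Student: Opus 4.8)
The plan is to reduce the statement to two things: that the ``$*$''-action sends each basis vector to a nonzero scalar multiple of a basis vector (monomiality), and that it satisfies the two axioms of a (right) group action; linearity then makes $K\hat V$ a $KG$-module. The first point is already in place: by \eqref{2.1} the ``dot''-action of $G$ on $K^V$ is $K$-linear, and since $\hat V$ is $G$-invariant, each $g\in G$ permutes the basis $\hat V$ via $\chi\mapsto\chi\ldotp g$. Hence $\chi*g=\alpha(\chi,g)\,\chi\ldotp g$ is a nonzero scalar multiple of the basis vector $\chi\ldotp g\in\hat V$, so once we know ``$*$'' is an action it is automatically monomial.

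For the identity axiom, first observe that applying the cocycle relation \eqref{cocycle} with $x=g=1$ gives $f(1)=f(1)\ldotp 1+f(1)=2f(1)$, so $f(1)=0$ in $V$; therefore $\alpha(\chi,1)=\chi(f(1))=\chi(0)=1$ and $\chi*1=\chi\ldotp 1=\chi$ for all $\chi\in\hat V$.

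The substantive point is compatibility with composition, i.e.\ $(\chi*g)*h=\chi*(gh)$ for all $\chi\in\hat V$ and $g,h\in G$. Unwinding the definitions, $(\chi*g)*h=\alpha(\chi,g)\,\alpha(\chi\ldotp g,h)\,\chi\ldotp(gh)$ while $\chi*(gh)=\alpha(\chi,gh)\,\chi\ldotp(gh)$, so everything comes down to the ``twisted cocycle identity'' $\alpha(\chi,g)\,\alpha(\chi\ldotp g,h)=\alpha(\chi,gh)$. Here I would simply compute: by \eqref{2.1}, $\alpha(\chi\ldotp g,h)=(\chi\ldotp g)(f(h^{-1}))=\chi\bigl(f(h^{-1})g^{-1}\bigr)$, so the left-hand side is $\chi(f(g^{-1}))\,\chi\bigl(f(h^{-1})g^{-1}\bigr)=\chi\bigl(f(h^{-1})g^{-1}+f(g^{-1})\bigr)$, using that $\chi$ is a homomorphism from $(V,+)$ to $(K^*,\cdot)$; and the right-hand side is $\chi(f((gh)^{-1}))=\chi(f(h^{-1}g^{-1}))$. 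These two agree precisely because the $1$-cocycle property \eqref{cocycle}, applied to the pair $(h^{-1},g^{-1})$, gives $f(h^{-1}g^{-1})=f(h^{-1})g^{-1}+f(g^{-1})$.

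I do not expect a genuine obstacle here: the argument is a direct verification, and the only thing requiring care is the placement of the inverses and the left/right conventions, so that the argument of $f$ appearing on the right-hand side of the twisted cocycle identity is exactly the one produced by the cocycle relation on the left. Once the twisted cocycle identity is established, associativity of ``$*$'' is immediate (associativity of the ``dot''-action being inherited from its being a genuine action on $K^V$), and monomiality of the basis $\hat V$ has already been noted, so the proof is complete.
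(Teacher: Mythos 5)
Your verification is correct and is exactly the argument the paper has in mind: the paper omits the proof (citing Jedlitschky's thesis) but remarks immediately after the statement that the cocycle condition \eqref{cocycle} is precisely what makes $(\chi*g)*h=\chi*(gh)$ hold, and your twisted cocycle identity $\alpha(\chi,g)\,\alpha(\chi\ldotp g,h)=\alpha(\chi,gh)$, obtained by applying \eqref{cocycle} to the pair $(h^{-1},g^{-1})$, is that verification carried out in full. The remaining points (monomiality since $\alpha(\chi,g)=\chi(f(g^{-1}))\in K^*$, and $f(1)=0$ giving the identity axiom) are handled correctly as well.
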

The fact that $f$ satisfies (\ref{cocycle}) ensures, that the $*$-action on  $K \hat V$ is indeed compatible with the multiplication in $G$, that is we have
$$
(\chi* g)* h=\chi *(gh) \quad \forall\, \chi\in \hat V, g, h\in G.
$$
For simplicity, we replace the $*$ in \ref{jed} by the standard notation and write $\chi g$ instead of $\chi* g$. Obviously, if $G$ acts on $V$ from the left as group of automorphisms,  replacing $f$ by an left 1-cocycle $\tilde f: G\longrightarrow V$ satisfying $\tilde f(gx)=g\tilde f(x)+\tilde f(g)$, we can define analogously the left $KG$-module $K\hat V$ with monomial basis $\hat V$. Moreover if $G$ acts on $V$ from both sides such that $(gA)h=g(Ah)$ for all $g, h\in g$ and $A\in V$, and if $f$ is a right and left 1-cocycle, $K\hat V$ becomes an $KG$-bimodule with monomial action of $G$ on the base $\hat V$ from both sides. $K\hat V$ decomposes into a direct sum of $KG$-modules (from right, left, both sides) of $G$ on $\hat V$ under the dot permutation action ``$\ldotp$'' in (\ref{2.1}).

We apply this to the special case, where $G=U=U_n(q)$ is the group of lower unitriangular $n\times n$-matrices with entries in the filed $\F_q$ with $q$ a prime power. Let $V$ be the set of nilpotent strictly lower triangular $n\times n$-matrices over $\F_q$, thus $V=\{u-1\,|\,u\in U\}$. Then $V$ is the Lie algebra of $U$ and in particular an abelian group under addition of matrices. $U$ acts on $V$ by right- and left multiplication group of automorphisms and both action commute by associativity of matrix multiplication. Henceforth we take $K=\C$ and choose once for all a non trivial character $\theta:(\F_q,+)\longrightarrow \C^{*}$. Moreover the $V^*=\Hom(V, \F_q)$ has a basis given by the coordinate functions $\xi_{ij}: V\longrightarrow \F_q: A=A_{ij}\in \F_q$, where $A_{ij}$ denotes the entry of the matrix $A$ at position $(i,j)$, $1\leqslant i,j \leqslant n$. For a matrix $B\in V$, we define the linear $\C$-character $\chi_{_B}\in \hat V=\Hom((V,+),\C^*)$ to be 
$$
\chi_{_B}=\theta\circ (\sum_{i,j} B_{ij}\xi_{ij}).
$$
Then $\hat V=\{\chi_{_B}\,|\,B\in V\}$.
Now consider the map $f: U\longrightarrow V: u\mapsto u-1$. It can be seen easily that $f$ is a two sided 1-cocycle from $U$ to $V$, thus we may apply theorem \ref{jed} to turn the $\C$-space $\C \hat V$ into an $\C U$-bimodule, where $U$ acts on both sides on the basis $\hat V$ of $\C\hat V$ monomially.

Indeed the $\C U$-bimodule $\C \hat V$ is isomorphic to the regular $\C U$-bimodule $_{ \C U}\C U_{ \C U}$. This can be shown using the fact that $f: U\longrightarrow V: u\mapsto u-1$ is a bijection. Moreover, since $V$ is a finite group, the $\C$-vector space $\C^V$ of functions from $V$ to $\C$ is isomorphic to the group algebra $\C V$ as $\C V$-module. Here for any field $\C $ the action of $V$ on $\C^V$ from the right is given in (\ref{2.1}) and similarly from the left by setting
$$
A \ldotp \tau(B)=\tau(-A+B), \text{ for } \tau\in \C^V, A, B\in V.
$$
(Recall that addition in the abelian group $V$ turns into multiplication in the group algebra $\C V$). The isomorphism $\C^V\longrightarrow \C V$ then is given by evaluation of $\tau\in \C^V$:
$$
\tau\mapsto \sum_{A\in V} \tau(A) A.
$$
In particular, for $A\in V$, the linear character $\chi_A\in \hat V \leqslant \C^V$ is mapped to
\begin{equation}
 \chi_{_A }\mapsto \sum_{B\in V}\chi_{_A}(B) B =|V|e_{_{-A}}\in \C V,
\end{equation}
 where $e_{_{-A}}$ is the primitive idempotent in $\C V$ associated with $-A\in V$. Thus \ref{jed} yields an action of $\C U$ on $\C V$ such that $U$ acts on the basis $\{e_{_A}\,|\,A\in V\}$ of $\C V$ monomially.

Next we describe the action of $U$ on $\hat V$. Recall that for $A\in V, u\in U$, the linear character $\chi_A\ldotp u \in \hat V$ is given by
\begin{equation}
( \chi_{_A} \ldotp u ) (B)=\chi_{_A}(Bu^{-1})\quad\forall\, B\in V.
\end{equation}
Since $\chi_A\ldotp u\in \hat V$ there must be a $C\in V$ such that $\chi_A\ldotp u=\chi_C$. In order to describe $C$ for given $A$ and $u$ we let  \,$\bar{} : M_{n\times n}(\F_q)\longrightarrow V: A \mapsto \overline{A}\in V$ the natural projection. Recall that $\epsilon_{ij}\in M_{n\times n}(\F_q)$ denotes the $(i,j)$-th matrix unit. For $A\in M_{n\times n}(\F_q)$, we denote by $A_{ij}\in \F_q$  the entry of $A$ at position $(i,j)$, for $1\leqslant i, j\leqslant n$. Thus $A=\sum_{1\leqslant i, j\leqslant n}A_{ij}\epsilon_{ij}$ and $\overline A =\sum_{1\leqslant  j<i\leqslant n}A_{ij}\epsilon_{ij}$. For $u\in U$ let $u^t$ denote the transposed matrix (an upper unitriangular matrix). Then we have the following:
\begin{Lemma}[Yan]\label{2.6}
 Let $A\in V$ and $u\in U$. Then $\chi_{_A} \ldotp u=\chi_{_C}$, where $C=\overline{Au^{-t}}$, setting $u^{-t}=(u^{-1})^t$.
\end{Lemma}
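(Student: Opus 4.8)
The plan is to recast the whole computation in terms of the trace pairing on matrices. The two elementary observations that do all the work are: (1) for any $A,B\in M_{n\times n}(\F_q)$ one has $\sum_{i,j}A_{ij}B_{ij}=\mathrm{tr}(A^tB)$, so that by the very definition of $\chi_{_A}$ one has $\chi_{_A}(B)=\theta\bigl(\mathrm{tr}(A^tB)\bigr)$ for $A,B\in V$; and (2) if $B\in V$ is strictly lower triangular, then $\mathrm{tr}(A^tB)=\mathrm{tr}(\overline A^tB)$ for \emph{every} $A\in M_{n\times n}(\F_q)$, because the entries of $A$ on or above the diagonal only get paired with zero entries of $B$. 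Combined with the cyclic invariance of the trace, these suffice.

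First I would check that the right dot-action stays inside $V$: for $B\in V$ and $u\in U$ the product $Bu^{-1}$ is strictly lower triangular (a strictly lower triangular matrix times a lower unitriangular one), so $(\chi_{_A}\ldotp u)(B)=\chi_{_A}(Bu^{-1})$ makes sense, and by Theorem \ref{jed} we already know $\chi_{_A}\ldotp u$ is a linear character of $V$, hence equals $\chi_{_C}$ for a unique $C\in V$; it only remains to identify $C$.

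Next I would simply compute, for $B\in V$,
$$(\chi_{_A}\ldotp u)(B)=\chi_{_A}(Bu^{-1})=\theta\bigl(\mathrm{tr}(A^tBu^{-1})\bigr)=\theta\bigl(\mathrm{tr}(u^{-1}A^tB)\bigr),$$
using cyclicity of the trace. Since $u^{-1}A^t=(Au^{-t})^t$ with $u^{-t}=(u^{-1})^t$, this equals $\theta\bigl(\mathrm{tr}((Au^{-t})^tB)\bigr)$. Now invoke observation (2): as $B\in V$, I may replace $Au^{-t}$ by its strictly lower triangular truncation $\overline{Au^{-t}}$ without changing the value, obtaining $(\chi_{_A}\ldotp u)(B)=\theta\bigl(\mathrm{tr}((\overline{Au^{-t}})^tB)\bigr)=\chi_{_{\overline{Au^{-t}}}}(B)$. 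Since this holds for all $B\in V$, we conclude $\chi_{_A}\ldotp u=\chi_{_C}$ with $C=\overline{Au^{-t}}$.

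There is no genuine obstacle here; the one point requiring care — and the reason the statement is phrased with the truncation $\overline{\phantom{A}}$ — is that $Au^{-t}$ itself generally lies outside $V$ (the matrix $u^{-t}$ is upper unitriangular), so the passage from $Au^{-t}$ to $\overline{Au^{-t}}$ must be justified, which is exactly observation (2): $\chi_{_C}$ is only ever evaluated on strictly lower triangular arguments. An alternative would be to verify the formula first on the root subgroup generators $u=x_{kl}(\alpha)$ and extend multiplicatively via $(\chi\ldotp g)\ldotp h=\chi\ldotp(gh)$, but the trace computation disposes of an arbitrary $u$ in one stroke and is cleaner.
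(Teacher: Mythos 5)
Your argument is correct and complete: the identity $\chi_{_A}(B)=\theta(\mathrm{tr}(A^tB))$, cyclicity of the trace, and the observation that $\mathrm{tr}(M^tB)=\mathrm{tr}(\overline{M}^tB)$ for strictly lower triangular $B$ together give $(\chi_{_A}\ldotp u)(B)=\theta(\mathrm{tr}((\overline{Au^{-t}})^tB))=\chi_{_{\overline{Au^{-t}}}}(B)$ for all $B\in V$, which is exactly the claim. The paper states this lemma without proof, attributing it to Yan, so there is nothing to compare against; your trace-pairing computation is a clean self-contained justification, and it is consistent with the paper's subsequent description of the action via truncated column operations on root-subgroup generators.
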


By general theory the idempotent $e_{_A}$ in $KV$ affording the linear character $\chi_{_A}$ of $V$ is given as:
$$e_{_A}=\frac{1}{|V|}\sum_{B\in V}\overline{\chi_{_A}(B)}B,$$
where $\bar{} : \C\rightarrow \C: z\mapsto \bar{z}$ denotes complex conjugation.

We write $[A]=e_{_{A}}$ and illustrate this idempotent by a triangle, omitting superfluous zeros in the upper half of matrix $A\in V$. For instance
\begin{center}
\begin{picture}(150, 50)
\put(0,20){$e_{_A}=$}
\put(30,0){\line(1,0){50}}
\put(30,0){\line(0,1){50}}
\put(30,50){\line(1,-1){50}}
\put(32,33){0}
\put(32,18){$\alpha$}
\put(32,3){$\beta$}
\put(47,18){$0$}
\put(47,3){$\gamma$}
\put(62,3){$0$}
\put(82,0){,}
\put(100,20) {$\alpha, \beta, \gamma\in \F_q$}
\end{picture}
\end{center}
denotes the idempotent $e_{_A}=[A]\in KV$ affording the linear character $\chi_{_A}\in \hat V$ with $$A=
\begin{pmatrix}
 0 & 0 & 0\\
\alpha & 0&0 \\
 \beta & \gamma &0 
\end{pmatrix}\in V.$$
For $A\in V, u\in U$ we denote the matrix $\overline{Au^{-t}}\in V$ by $A\ldotp u$. This defines indeed a permutation action of $u$ on $V$. Moreover, using theorem \ref{jed} we derive a monomial action of $u\in U$ on the idempotent  basis $\{[A]\,|\,A\in V\}$ of $KV$ interpreting again linear characters of $V$ as elements of the group $KV$:
\begin{Cor}\label{2.7}
 Let $A\in V$ and $u\in U$. Then 
$$[A] u=\chi_{_{A\ldotp u}}(u-1)[A\ldotp u].$$
\end{Cor}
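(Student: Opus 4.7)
The plan is to transport the $*$-action of Theorem \ref{jed} from the monomial basis $\hat V$ of $\C\hat V$ across the evaluation isomorphism $\Psi\colon \C^{V}\to \C V$, $\tau \mapsto \sum_{B\in V}\tau(B)\,B$, which, as recalled in the text, identifies $\chi_{_A}$ with $|V|\,e_{_{-A}}$. Pushing the action through $\Psi$ will produce the claimed monomial action on the idempotent basis $\{[A]\mid A\in V\}$.

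Concretely, instantiating Theorem \ref{jed} with the two-sided $1$-cocycle $f(u)=u-1$ and naming the dot-orbit point by Lemma \ref{2.6}, one obtains
$$
\chi_{_A} * u \;=\; \chi_{_A}(u^{-1}-1)\,\chi_{_{A\ldotp u}}.
$$
Applying $\Psi$ to both sides and cancelling $|V|$ yields $e_{_{-A}}\,u = \chi_{_A}(u^{-1}-1)\,e_{_{-(A\ldotp u)}}$. Replacing $A$ by $-A$ and using $\overline{(-A)u^{-t}} = -(A\ldotp u)$ (so the dot action commutes with negation), this rearranges to
$$
[A]\,u \;=\; \overline{\chi_{_A}(u^{-1}-1)}\;[A\ldotp u].
$$

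What remains is to rewrite the scalar $\overline{\chi_{_A}(u^{-1}-1)}$ in the form $\chi_{_{A\ldotp u}}(u-1)$. Evaluating the permutation dot action of (\ref{2.1}) at $B=u-1\in V$ gives
$$
(\chi_{_A}\ldotp u)(u-1) \;=\; \chi_{_A}\bigl((u-1)u^{-1}\bigr) \;=\; \chi_{_A}(1-u^{-1}) \;=\; \overline{\chi_{_A}(u^{-1}-1)},
$$
while by Lemma \ref{2.6} the left-hand side equals $\chi_{_{A\ldotp u}}(u-1)$. Substituting delivers the statement. The only real hazard is keeping track of signs and complex conjugation across the two translations (character basis $\leftrightarrow$ idempotent basis, and $u\leftrightarrow u^{-1}$); no content beyond Theorem \ref{jed} and Lemma \ref{2.6} is genuinely new.
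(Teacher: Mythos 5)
Your proposal is correct and is exactly the argument the paper leaves implicit: transport the $*$-action of Theorem \ref{jed} through the evaluation isomorphism $\C^V\cong\C V$ (with the sign flip $\chi_{_A}\mapsto |V|e_{_{-A}}$) and then identify the scalar via Lemma \ref{2.6}, using $(u-1)u^{-1}=-(u^{-1}-1)$ to convert the conjugate $\overline{\chi_{_A}(u^{-1}-1)}$ into $\chi_{_{A\ldotp u}}(u-1)$. The sign and conjugation bookkeeping, which is the only delicate point, is handled correctly.
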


For $1\leqslant j < i \leqslant n$ and $\alpha\in \F_q$ recall that $x_{ij}(\alpha)=1+\alpha \epsilon_{ij}\in U$ and that the root subgroup $X_{ij}=\{x_{ij}(\alpha)\,|\,\alpha\in \F_q\}$ is an abelian subgroup of $U$ isomorphic to $(\F_q,+)$. Moreover $x_{ij}(\alpha)$ acts on any matrix $A\in M_{n\times n}(\F_q)$ by the elementary column operation adding $\alpha$ times column $i$ to column $j$ in $A$. Now $x_{ij}(\alpha)^{-t}=x_{ji}(-\alpha)$, hence $A\ldotp x_{ij}(\alpha)$ is obtained from $A$ by adding $-\alpha$ times column $j$ to column $i$ (from left to right) and setting nonzero entries in the resulting matrix at position on or to the right of the diagonal to zero. We call this maneuver ``truncated column operation'' (from left to right). Since every element of $U$ can be written uniquely as product $u=\prod_{1\leqslant j < i \leqslant n}x_{ij}(\alpha_{ij})$ for $\alpha_{ij}\in \F_q$, where the product is taken in an arbitrary but fixed linear order of the indices $(i,j)$, the (permutation) action of $u\in U$ on $[A]$ for $A\in V$ can be described by the corresponding sequence of truncated column operations. Moreover
\begin{equation}\label{2.8}
 \chi_{A\ldotp x_{ij}(\alpha)}(x_{ij}(\alpha)-1)= \chi_{_{A\ldotp x_{ij}(\alpha)}}(\alpha e_{ij})=\theta(\alpha A_{ij})
\end{equation}since column $j$ coincides in $A$ and $A.x_{ij}(\alpha)$.

\begin{Cor}\label{2.9}
 Let $1\leqslant j < i \leqslant n$, $\alpha\in \F_q$ and $A\in V$. Then 
$$[A]x_{ij}(\alpha)=\theta(\alpha A_{ij})[A\ldotp x_{ij}(\alpha)].$$
\end{Cor}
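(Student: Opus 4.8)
The statement is a direct specialization of Corollary \ref{2.7} to the single root element $u = x_{ij}(\alpha)$, combined with the computation recorded in (\ref{2.8}); the plan is simply to assemble these. Concretely, I would first note that $x_{ij}(\alpha) = 1 + \alpha\epsilon_{ij}$, so $x_{ij}(\alpha) - 1 = \alpha\epsilon_{ij}$, which lies in $V$ because $i > j$. Feeding $u = x_{ij}(\alpha)$ into Corollary \ref{2.7} then gives immediately
$$[A]\,x_{ij}(\alpha) = \chi_{_{A\ldotp x_{ij}(\alpha)}}(\alpha\epsilon_{ij})\,[A\ldotp x_{ij}(\alpha)],$$
so everything reduces to identifying the monomial scalar $\chi_{_{A\ldotp x_{ij}(\alpha)}}(\alpha\epsilon_{ij})$ with $\theta(\alpha A_{ij})$.

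For that, I would unwind the definition $\chi_{_B} = \theta\circ\bigl(\sum_{k,l} B_{kl}\xi_{kl}\bigr)$ with $B = A\ldotp x_{ij}(\alpha)$. Since the coordinate function $\xi_{kl}$ picks out the $(k,l)$-entry and $\alpha\epsilon_{ij}$ has its only nonzero entry $\alpha$ at position $(i,j)$, one gets $\chi_{_B}(\alpha\epsilon_{ij}) = \theta(\alpha B_{ij})$. So the claim is equivalent to the single-entry identity $(A\ldotp x_{ij}(\alpha))_{ij} = A_{ij}$.

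This last identity is exactly the observation underlying (\ref{2.8}): by definition $A\ldotp x_{ij}(\alpha) = \overline{A\,x_{ij}(\alpha)^{-t}} = \overline{A\,x_{ji}(-\alpha)}$, which is obtained from $A$ by the truncated column operation that adds $-\alpha$ times column $j$ of $A$ to its column $i$ and then zeroes out all entries on or above the diagonal. Since $j \ne i$, column $j$ is not touched by this operation, so column $j$ of $A\ldotp x_{ij}(\alpha)$ coincides with column $j$ of $A$; in particular the $(i,j)$-entries agree, the position $(i,j)$ being strictly below the diagonal (as $i > j$) and hence not affected by the truncation. Assembling the three steps yields $[A]\,x_{ij}(\alpha) = \theta(\alpha A_{ij})\,[A\ldotp x_{ij}(\alpha)]$.

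There is essentially no obstacle here: the only point requiring care is the index bookkeeping in the truncated column operation, namely that column $j$ stays fixed and that the $(i,j)$-entry survives the truncation — and this has already been spelled out in the discussion preceding (\ref{2.8}).
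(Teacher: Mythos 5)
Your proposal is correct and follows exactly the paper's own route: the corollary is the specialization of Corollary \ref{2.7} to $u=x_{ij}(\alpha)$, with the scalar identified via the computation (\ref{2.8}), whose justification is precisely your observation that the truncated column operation leaves column $j$ (and in particular the subdiagonal entry at position $(i,j)$) untouched. Nothing is missing.
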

Similarly the left operation of $u$ on the idempotent basis $\{[A]\,|\,A\in V\}$ of $V$ can be described by sequences of truncated row operations from down up, the coefficient in $\C$ being obtained similarly.

\begin{Remark}\label{2.10}
 The basis $\{\chi_{_A}\,|\,A\in V\}$ of $\C \hat V$ mapped back into $\C U\cong \C^U$ by extending $f^{-1}: V \longrightarrow U:  A\mapsto A+1\in U$ be linearity is Yan's Fourier basis of $\C U$, dual to the basis $\{f^{-1}[A]\,|\,A\in V\}$ of $\C U$.
\end{Remark}

We set $\E=\{[A]\,|\,A\in V\}$. The orbits of $U$ acting on $\E$ can now be described combinatorially using \ref{2.9} and its left handed analogue.

\begin{Defn}\label{2.11}
 Let $1\leqslant j < i \leqslant n$. The arm $h_{ij}^a$ centred at $(i,j)$ consists of all positions $(i,k)\in \Phi^+$ strictly to the right of $(i,j)$, thus $h_{ij}^a=\{(i,k)\,|\,j<k<i\}$, and the hook leg $h_{ij}^l$ is the set of positions $(l,j)\in \Phi^+$ strictly above $(i,j)$, thus $h_{ij}^l=\{(l,j)\,|\,j<l<i\}$. Finally the hook $h_{ij}$ centred at $(i,j)$ is defined to be $h_{ij}=h_{ij}^a\cup h_{ij}^b\cup \{(i,j)\}$. This may be illustrated as 
\begin{center}
 \begin{picture}(120,130)
  \put(0,25){\line(1,0){100}}
\put(0,25){\line(0,1){100}}
\put(0,125){\line(1,-1){100}}
\put(30,45){\circle*{4}}
\put(80,45){\circle*{4}}
\put(30,95){\circle*{4}}
\multiput(30,45)(7,0){7}{\line(1,0){4}}
\multiput(30,45)(0,7){7}{\line(0,1){5}}
\put(88,43){\footnotesize$i$}
\put(28,103){\footnotesize$j$}
\put(22,35){\footnotesize$(i,j)$}
\put(31,65){\line(1,1){30}}
\put(65,95){\footnotesize$h_{ij}^l$}
\put(61,40){\line(-1,-1){30}}
\put(26,0){\footnotesize$h_{ij}^a$}
 \end{picture}
\end{center}
\end{Defn}

Let $[A]\in \E$, and let $\tilde \O$ be the $U$-$U$-biorbit containing $[A]$. If $A$ is the zero matrix, $\O=\{[A]\}$. Otherwise let $A_{ij}\not=0$ be the lowest non zero entry in  the first non zero column $j$ of $A$ from the left. Acting by truncated row- and column operations, i.e. by elements of $U$ of the form $x_{ik}(\alpha_k)$ from the left and $x_{lj}(\beta_l)$ from the right, for suitable $\alpha_k, \beta_{l}\in \F_q$,  $l, k=j+1,\ldots,i-1$ we find $[B]\in \O$ such that $B_{ij}=A_{ij}$ and $B_{kl}=0$ for all $(k,l)\in h_{ij}^a\cup h_{ij}^l$. Choosing $0\not=B_{st}$ to be the lowest non zero value in $B$ in the first non zero column $t$ strictly to the right of column $j$ (so $j<t<s\leqslant n-1$) and proceeding in the same way, we obtain an idempotent $[C]\in \O$ with $A_{ij}=B_{ij}=C_{ij}, C_{st}=B_{st}$, such that all entries on the hook arms $h_{ij}^a$, $h_{st}^a$ of $C$ are zeros. Proceeding in this way we finally find an idempotent $[D]$ in $\O$ such that in $D$ in each  row and in each column there is at most one non zero entry. Such idempotents are called {\bf verge idempotents} and we have shown that  each $U$-$U$-biorbit $\O$ of $\E$ contains a verge idempotent. One sees easily that each biorbit contains at most one verge idempotent and we have shown that there is a bijection between verges and $U$-$U$-biorbits on $\E$. Obviously the idempotent  $[A]\in \E$ for the  zero matrix $A=0$ affords the trivial representation of $V$.

\begin{Defn}\label{2.12}
 A (right) {\bf template} is an idempotent $[A]\in \C V$, $A\in V$ such that the entries to the right of the lowest non zero entries in each column of $A$ are zero. So if $A_{ij}\not=0$, $1\leqslant j<i \leqslant n$, but $A_{kj}=0$ for $i+1\leqslant k \leqslant n$, then all entries on $h_{ij}^a$ are zero.

\medskip

A {\bf main condition} of a template $[A]$ is a position $(i,j)$ such that $A_{ij}$ is the lowest non zero entry in column $j$ of $A$. Note that the main conditions of templates are in different rows by construction. We denote the set of main conditions of the template $[A]$ by $\main[A]$ and call $[B]\in \C V$ with $ B=\sum_{(i,j)\in \main(A)} A_{ij}e_{ij}$, the {\bf verge} of the template $[A]$, denoted by $\verge[A]=[B]$. Note that by construction $\verge[A]$, $[A]$ a template, is indeed a {\bf verge idempotent}, that is $\verge[A]$ has in each row and each column at most one non zero entry. 
We may illustrate this as follows:
\end{Defn}
\begin{center}
 \begin{picture}(190,145)
\put(80,0) {\line(1,0){140}}
\put(80,0){\line(0,1){140}}  
\put(80,140){\line(1,-1){140}}
\put(0,60){template $[A]=$}
\put(100,30) {\line(1,0){90}}
\put(100,30){\line(0,1){90}}
\put(140,40) {\line(1,0){40}}
\put(140,40){\line(0,1){40}}  
\put(110,60) {\line(1,0){50}}
\put(110,60){\line(0,1){50}}
\put(120,10) {\line(1,0){90}}
\put(120,10){\line(0,1){90}}
\put(100,30){\circle*{3}}
\put(140,40){\circle*{3}}
\put(110,60){\circle*{3}}
\put(120,10){\circle*{3}}
\put(90,25){$z$}
\put(132,33){$z$}
\put(105,52){$z$}
\put(110,5){$z$}
\put(116,27){$\times$}
\put(116,57){$\times$}
\put(136,57){$\times$}
\put(122,80) {\line(1,1){30}}
\put(154,110) {hook}
 \end{picture}
\end{center}
The position denoted by $z$ are main conditions and have non zero entries in $A$ . All other possible non zero entries are located in columns with main conditions strictly above those and not on hook intersection, that is to the right of a main condition, indicated by $\times$. The positions strictly above main conditions not to the right of main conditions are called {\bf supplementary conditions} and their set is denoted by $\suppl(\p)$, where $\p=\main[A]$. Thus $A_{st}\not=0$ implies $(s,t)\in \p$ or $(s,t)\in \suppl(\p)$.

\begin{Theorem}[Yan]\label{2.13}
 Each right $U$-orbit $\O$ contains precisely one template $[A]$. Moreover if $\O$, $\O'$ are right $U$-orbits on $\E$ with templates $[A]$ respectively $[A'], A, A'\in V$, then $\C \O\cong \C \O'$ if and only if $\verge[A]=\verge[A']$. If $\verge[A]\not=\verge[A']$, then $\C \O$ and $\C \O'$ afford orthogonal characters, that is have no irreducible constituent in common.
\end{Theorem}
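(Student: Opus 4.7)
My plan is to prove the theorem in three steps.

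\textbf{Step 1 (Existence of a template in each right orbit).} Given $[A]\in\E$, I would produce a template in $[A]\cdot U$ by a left-to-right column-reduction: for $j=1,2,\ldots,n-1$ in order, if the current matrix has a nonzero $j$th column with lowest nonzero entry at row $i_j$ and value $v_j$, then for each $l$ with $j<l<i_j$ I right-multiply by $x_{lj}(A_{i_j,l}/v_j)$. By \ref{2.8} and \ref{2.9}, this truncated column operation kills $A_{i_j,l}$; because column $j$ vanishes strictly below row $i_j$, the operation fixes columns $\leq j$ and does not disturb any later pivot. Iterating yields an idempotent in the orbit with all arms zero, i.e., a template.

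\textbf{Step 2 (Uniqueness).} Let $[A],[A']$ be templates in a common right orbit. The biorbit--verge bijection established in the excerpt forces $\verge[A]=\verge[A']$, so $\main[A]=\main[A']=:\p$ and $A_{ij}=A'_{ij}=:v_j$ for $(i,j)\in\p$. Writing $A'=\overline{Av}$ with $v=I+w$ upper unitriangular,
\[
A'_{l,k}=A_{l,k}+\sum_{j<k}w_{j,k}A_{l,j}\qquad(l>k).
\]
I would show by induction on $k$ that column $k$ of $A$ and $A'$ agree. Column $1$ is invariant (empty sum). For the inductive step, I run a nested induction on $j<k$ in increasing order through those $j$ for which $(i_j,j)\in\p$ and $i_j>k$. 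For such $j$, the position $(i_j,k)$ lies on the arm of $(i_j,j)$, so $A'_{i_j,k}=0=A_{i_j,k}$, producing
\[
v_j w_{j,k}+\sum_{j'<j}A_{i_j,j'}\,w_{j',k}=0,
\]
where I have used that $A_{i_j,j'}=0$ for $j<j'<k$ (arm of $(i_j,j)$ in $A$). For $j'<j$ with no main condition in column $j'$ or with $i_{j'}\leq k$, the template structure of $A$ forces $A_{i_j,j'}=0$; for the remaining $j'<j$ (main condition at row $>k$) the inner induction has already set $w_{j',k}=0$. Hence $v_j w_{j,k}=0$, and since $v_j\neq 0$, $w_{j,k}=0$. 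For indices $j<k$ not of the processed form, column $j$ of $A$ vanishes in rows $>k$, so $w_{j,k}$ contributes nothing to $A'_{l,k}$ for $l>k$. Therefore column $k$ of $A'$ equals column $k$ of $A$, completing the outer induction and giving $A=A'$.

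\textbf{Step 3 (Isomorphism and orthogonality).} The excerpt has already asserted that right $U$-orbits lying in a common biorbit afford isomorphic right $\C U$-modules, while right orbits in distinct biorbits afford orthogonal characters. Combined with the biorbit--verge bijection, $\C\O\cong\C\O'$ holds iff $\O,\O'$ share a biorbit iff $\verge[A]=\verge[A']$; orthogonality is the complementary statement.

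The main obstacle is Step 2, namely the careful bookkeeping verifying that for each $j<k$ either the arm-condition equation forces $w_{j,k}=0$ or column $j$ has no effect on column $k$ of $A'$. The nested induction proceeds cleanly because the main conditions of a template lie in pairwise distinct rows, which makes the underlying linear system triangular. A backup strategy is a counting argument: templates with a fixed verge $[B]$ number $q^{|\suppl(\p)|}$ (where $\p=\main[B]$), and a stabilizer computation shows the biorbit of $[B]$ decomposes into the same number of right orbits, so Step 1 combined with the pigeonhole principle forces uniqueness.
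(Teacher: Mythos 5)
Your Steps 1 and 2 are sound, and Step 2 is in fact more explicit than anything the paper writes down: the paper never spells out uniqueness of the template in a right orbit, whereas your nested induction is a correct direct argument. The key points check out: since main conditions of a template occupy pairwise distinct rows, the entry $A_{i_j,j'}$ can be nonzero for $j'<j$ only when column $j'$ carries a main condition strictly below row $i_j$, hence below row $k$, so the linear system in the $w_{j',k}$ is indeed triangular with nonzero pivots $v_j$; and for unprocessed $j$ the source column dies above row $k$, so $w_{j,k}$ is invisible in rows $>k$. The column reduction in Step 1 also works, because once the arm of the pivot of column $j$ is cleared, all of row $i_j$ strictly to the right of $j$ vanishes, so later operations with source column $>j$ never disturb that row again. (A small point you should make explicit: concluding $\verge[A]=\verge[A']$ from the biorbit--verge bijection uses that $\verge[A]$ actually lies in the biorbit of the template $[A]$, which follows from the leg-clearing procedure described before Definition \ref{2.12}.)

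Step 3, however, is a genuine gap. The assertion you cite --- that right orbits in one biorbit afford isomorphic modules while right orbits in different biorbits afford orthogonal characters --- appears only in the introduction as a description of the Andr\'e--Yan theory; it is not established anywhere before Theorem \ref{2.13}, and it is precisely the module-theoretic content of the theorem you are proving, so invoking it is circular. The paper supplies the missing arguments only in Section 3: Proposition \ref{3.1} identifies $\Hom_{\C U}(\C\O_A^r,\C\O_B^r)$ with $\C(\O_A^l\cap\O_B^r)$ using self-injectivity of $\C U$; Corollary \ref{3.3} deduces from this the dichotomy ``isomorphic or disjoint''; and Theorem \ref{3.8}(4) produces, for each template $[B]$ with $\verge[B]=\verge[A]$, an element $u\in U$ with $u[A]\in\C^*[B]$, so that left multiplication by $u$ gives the required isomorphism $\C\O_A^r\to\C\O_B^r$. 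To close your proof you need these two ingredients or equivalents: an explicit isomorphism between orbit modules of templates with equal verge (obtained by filling the hook legs via truncated row operations), and an argument that distinct verges force $\O_A^l\cap\O_{A'}^r=\emptyset$ and hence orthogonality. Your backup counting argument is a legitimate alternative for the uniqueness in Step 2, but it does not repair Step 3.
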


In particular $[A] \C U\cong \verge[A] \C U$ for any template $[A]$ in $\E$. An isomorphism is obtained by a sequence of truncated row operations applied to $\verge[A]$ yielding the template $[A]$. The characters afforded by the right $U$-orbits $\O$ of $\E$ are supercharacters in Yan's supercharacter theory for $U$. By \ref{2.13}  for any idempotent $[B]\in \E$, we can find a unique verge idempotent $[A]\in \E$ such that $[B]\C U\cong [A]\C U$, so it suffices to investigate the supercharacters arising from orbits generated by verge idempotents and we can extend the definition \ref{2.12} by denoting $\main[B]=\main [A]$ and $\verge[B]=\verge[A]$.

\section{Hom-spaces and irreducibles of minimal dimension}
Most of the material presented in this section is known or can easily be derived from the existing literature (e.g. \cite{super} or \cite{yan}). However, since we employ a representation theoretic approach using Hom-spaces between orbit modules explicitely, we provide proofs as well.

\medskip

For $[A]\in \E, A\in V$, we denote the right- respectively left $U$-orbit in $\E$ containing $[A]$ by $\O_A^r$ and $\O_A^l$ respectively. Recall that $\C \E$ is isomorphic to $\C U$ as $U$-$U$-bimodule (see Remark \ref{2.10}), an isomorphism given by the inverse $f^{-1}$ of the (bijective) 1-cocycle $f: U\rightarrow V.$ More precisely
\[
f^{-1}[A]=\frac{1}{|V|}\sum_{B\in V} \overline{\chi_{_A}(B)}(B+1)\in \C U
\]
extends by linearity to an $U$-$U$-bimodule isomorphism. In particular $\C \O^r_A$ and $\C \O^l_A$ are isomorphic under $f^{-1}$ to the right respectively the left ideal of $\C U$ generated by $f^{-1}[A]$.

\begin{Prop}\label{3.1}
Let $[A], [B]\in\E$. Then $\Hom_{\C U}(\C \O^r_A, \C\O^r_B)$ has $\C$-basis $f^{-1}(\O^l_A\cap \O_B^r)$, where $x\in f^{-1}(\O^l_A\cap \O_B^r)$ acts on $\C \O^r_A$ by left multiplication.
\end{Prop}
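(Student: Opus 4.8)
The plan is to exploit the bimodule isomorphism $f^{-1}\colon \C\E \xrightarrow{\sim} {}_{\C U}\C U_{\C U}$ and reduce the statement to the classical description of homomorphisms between right ideals of a group algebra. Write $a=f^{-1}[A]$ and $b=f^{-1}[B]$, so that $f^{-1}$ carries $\C\O^r_A$ onto the right ideal $a\C U$ and $\C\O^r_B$ onto $b\C U$. Then $\Hom_{\C U}(\C\O^r_A,\C\O^r_B)\cong \Hom_{\C U}(a\C U, b\C U)$, and the first step is to recall the standard fact that, since $a$ is an idempotent, $\Hom_{\C U}(a\C U, b\C U)\cong b\,\C U\, a$ via the map sending $c\in b\C U a$ to left multiplication $x\mapsto cx$ on $a\C U$. (Here one uses that $a$ being idempotent means $a\C U$ is a direct summand of $\C U$ with $\End_{\C U}(\C U)\cong \C U$ acting by left multiplication, and $\Hom_{\C U}(a\C U,b\C U)= b(\C U)a$ inside $\C U$.) So the problem becomes: identify $b\,\C U\,a$ as a subspace of $\C U$ and transport it back through $f^{-1}$.

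Next I would compute $b\,\C U\,a = f^{-1}[B]\cdot \C U\cdot f^{-1}[A]$ inside $\C\E$ using the bimodule structure. Because $f^{-1}$ is a bimodule isomorphism, $f^{-1}[B]\,\C U\,f^{-1}[A]$ corresponds under $f^{-1}$ to $\C\E$-elements of the form: the image of $[B]$ under the two-sided action of $\C U$ that also lands in the right orbit $\C\O^r_A$. Concretely, the left ideal $(\C U)f^{-1}[A]$ corresponds to $\C\O^l_A$ (the left orbit module), by the left-handed analogue of the remark that $f^{-1}$ sends $\C\O^r_A$ to $a\C U$; and intersecting the left orbit module $\C\O^l_A$ with the right orbit module $\C\O^r_B$ inside $\C\E$ gives exactly $\C(\O^l_A\cap \O^r_B)$, since $\E$ is a basis of $\C\E$ and the orbit modules are spanned by the basis vectors in the respective orbits. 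Thus $b\,\C U\,a$ corresponds to $\C(\O^l_A\cap \O^r_B)$, and its preimage $f^{-1}(\O^l_A\cap\O^r_B)$ is a $\C$-basis of $\Hom_{\C U}(\C\O^r_A,\C\O^r_B)$, acting by left multiplication as claimed.

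The point requiring the most care is the third step: verifying that inside $\C U$ one genuinely has $f^{-1}[B]\cdot\C U\cdot f^{-1}[A] = f^{-1}\big(\C(\O^l_A\cap\O^r_B)\big)$ as subspaces, i.e. that the double coset space picture is exact on the nose rather than merely up to some spanning/containment. One inclusion is clear: applying $U$ on the right to $f^{-1}[A]$ and then hitting with $f^{-1}[B]$ on the left (via the bimodule action, which on basis elements $[A]$ is monomial by Corollaries \ref{2.7}, \ref{2.9} and their left analogues) produces scalar multiples of basis vectors $f^{-1}[C]$ with $[C]$ simultaneously in $\O^r_A$ (right translate of $[A]$) and in $\O^l_B$ — but I must be careful about which orbit is left and which is right, matching the asymmetry in the statement ($\O^l_A\cap\O^r_B$). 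Unwinding: $\Hom_{\C U}(a\C U,b\C U)=b\,\C U\,a$; the factor $\C U\cdot a$ is the left ideal $\C U\, f^{-1}[A]\cong \C\O^l_A$, while left-multiplying by $b=f^{-1}[B]$ and landing in $b\C U\cong \C\O^r_B$ forces the result into $\C\O^r_B$; hence into $\C(\O^l_A\cap\O^r_B)$. The reverse inclusion follows because each single basis element $[C]$ with $[C]\in\O^l_A$ is, up to a nonzero scalar, $u[A]$ for some $u\in U$ (left monomial action), so $f^{-1}[C]=(\text{scalar})\,u\,f^{-1}[A]\in \C U\cdot a$; and if moreover $[C]\in\O^r_B$ then $f^{-1}[C]\in b\C U$ as well, because $\C\O^r_B = b\C U$ and $f^{-1}[C]$ is a basis vector of $\C\O^r_B$. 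Combining the two inclusions gives the equality, and dimension-counting then confirms the spanning set is actually a basis. A brief remark that these Hom-maps are nonzero precisely when $\O^l_A\cap\O^r_B\neq\varnothing$, recovering the orthogonality half of Theorem \ref{2.13}, would make a natural closing observation.
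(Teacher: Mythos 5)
Your overall route is the paper's: pass through the bimodule isomorphism $f^{-1}$, describe $\Hom_{\C U}(a\C U,\,b\C U)$ for the right ideals $a\C U=f^{-1}(\C \O_A^r)$ and $b\C U=f^{-1}(\C \O_B^r)$, and identify the answer with the span of $f^{-1}(\O_A^l\cap \O_B^r)$. The gap is in your first step: $a=f^{-1}[A]$ is \emph{not} an idempotent of $\C U$ in general. The element $[A]=e_{_A}$ is an idempotent of the group algebra $\C V$ of the \emph{additive} group $V$, and $f^{-1}$ is only a $U$-$U$-bimodule (i.e.\ vector space) isomorphism $\C \E\to\C U$, not an algebra homomorphism: for $n\geqslant 3$ one has $(B+1)(C+1)=BC+B+C+1\neq (B+C)+1$ because $BC\neq 0$ in $V$, so multiplication in $\C U$ does not correspond to multiplication in $\C V$ under $f^{-1}$. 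Consequently the standard identity $\Hom(a\C U,b\C U)\cong b\,\C U\,a$ for idempotent $a$ is not available as you invoke it, and your ``reverse inclusion'' in fact only places $f^{-1}[C]$ in $\C U a\cap b\C U$, not visibly in $b\,\C U\,a$.

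The repair is exactly what the paper does: since $\C U$ is semisimple (self-injective), any $\phi\in\Hom_{\C U}(a\C U, b\C U)$ extends to an endomorphism of the regular module, hence is left multiplication by some $c\in\C U$, and $\phi\mapsto \phi(a)=ca$ is a $\C$-linear isomorphism onto $\C U a\cap b\C U$; no idempotency of $a$ is needed (well-definedness of the inverse uses only that $a$ generates $a\C U$). Since $\C U a=f^{-1}(\C \O_A^l)$ and $b\C U=f^{-1}(\C \O_B^r)$ are spanned by the subsets $f^{-1}(\O_A^l)$ and $f^{-1}(\O_B^r)$ of the basis $f^{-1}(\E)$ of $\C U$, their intersection has basis $f^{-1}(\O_A^l\cap \O_B^r)$ --- this part of your argument is correct and coincides with the paper's. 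If you insist on the formula $b\,\C U\,a$, you can still recover $\C U a\cap b\C U=b\,\C U\,a$ from von Neumann regularity of the semisimple algebra $\C U$ (write $x=sa=br$, choose $a'$ with $aa'a=a$, and compute $x=xa'a=bra'a\in b\,\C U\,a$), but this is an unnecessary detour compared with quoting self-injectivity directly.
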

\begin{proof}
$\C U$ is a self injective algebra hence, for any $x, y\in \C U$ every homomorphism from $x\C U$ to $y \C U$ is obtained by left multiplication by some element of $\C U$. As an easy consequence we have
\begin{eqnarray*}
 \Hom_{\C U} ([A] \C U, [B] \C U) \cong f^{-1}(\C U[A]\cap [B] \C U)\cong \C (\O^l_A\cap \O^r_B)\end{eqnarray*} as $\C$-vector space, since $\O^l_A, \O^r_B \subseteq \E$. We conclude that $f^{-1}(\O^l_A\cap \O^r_B)$ gives a $\C$-basis of $\Hom_{\C U}(\C \O^r_A, \C \O_B^r)$.
\end{proof}

\begin{Defn}\label{3.2}
 Let $M, N$ be $\C U$-modules. We say that $M$ and $N$ are {\bf disjoint}, if they have no irreducible constituents in common, that is if the characters afforded by $M$ and $N$ are orthogonal.
\end{Defn}

Proposition \ref{3.1} enables us to give a proof of part of Yan's theorem \ref{2.13}.

\begin{Cor}\label{3.3}
 Let $\O_1, \O_2\subseteq \E$ be right $U$-orbits. Then either $\C \O_1\cong \C \O_2$ or $\C O_1$ and $\C O_2$ are disjoint. 
\end{Cor}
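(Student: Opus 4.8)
The plan is to deduce this from Proposition \ref{3.1} together with an elementary combinatorial observation about biorbits. First I would recall that the right $U$-orbit modules $\C \O_1$, $\C \O_2$ are isomorphic (as right $\C U$-modules) to the right ideals $f^{-1}[A]\C U$ and $f^{-1}[B]\C U$ for representatives $[A]\in \O_1$, $[B]\in \O_2$, and that by Proposition \ref{3.1} we have $\Hom_{\C U}(\C \O_1, \C \O_2)\cong \C(\O^l_A\cap \O^r_B)$, the space spanned by $f^{-1}$ of the elements lying in the intersection of the left orbit of $[A]$ with the right orbit of $[B]$ (here $\O^r_B=\O_2$).

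The key dichotomy is: either $\O^l_A\cap \O^r_B=\emptyset$, or it is nonempty. In the first case $\Hom_{\C U}(\C \O_1, \C \O_2)=0$ and symmetrically (running the same argument with the roles of $A$ and $B$, or equivalently observing that $\O^l_B\cap \O^r_A$ is also empty since both conditions just say $[A]$ and $[B]$ lie in different $U$-$U$-biorbits) $\Hom_{\C U}(\C \O_2, \C \O_1)=0$ as well; since $\C U$ is semisimple-free only in the sense that — more to the point, since the characters afforded by $\C\O_1$ and $\C\O_2$ are genuine characters and $\Hom_{\C U}(\C\O_1,\C\O_2)=0$ forces the inner product of these characters to be $0$, the two modules are disjoint in the sense of Definition \ref{3.2}. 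In the second case, pick $[C]\in \O^l_A\cap \O^r_B$. Then $\O^r_C=\O^r_B=\O_2$, so $\C\O_2\cong \C\O_C$; and $[C]\in \O^l_A$ means $[C]=u[A]$ for some $u\in U$, whence left multiplication by $f^{-1}(u)$ (a unit in $\C U$) is a right-$\C U$-module isomorphism $\C\O^r_A\to \C\O^r_C$, i.e. $\C\O_1\cong \C\O_2$.

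The main point to get right is the reduction of ``disjoint'' to ``$\Hom$-space is zero in both directions'': over $\C$ the character $\langle \chi_{\C\O_1},\chi_{\C\O_2}\rangle_U=\dim_\C\Hom_{\C U}(\C\O_1,\C\O_2)$, so a single vanishing $\Hom$-space already suffices; I would simply cite the standard identity. The only genuinely substantive ingredient is the observation that $\O^l_A\cap\O^r_B\neq\emptyset$ forces $\O^r_A$ and $\O^r_B$ to be conjugate under left multiplication, hence give isomorphic right modules — and this is immediate once one notes that left and right multiplications on $\E$ commute (associativity of matrix multiplication, as recorded in Section 2), so that a left translate of $[A]$ lying in the right orbit of $[B]$ has the same right orbit as $[B]$ and a left-translation-isomorphic right orbit to that of $[A]$. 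I expect the only mild obstacle is bookkeeping the identifications $\C\O^r_A\cong f^{-1}[A]\C U$ and keeping the left/right actions straight; there is no deep obstruction here, which is consistent with the remark in the text that the material is essentially known.

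\begin{proof}
By Remark \ref{2.10} the bimodule isomorphism $f^{-1}\colon \C\E\to\C U$ identifies $\C\O^r_A$ with the right ideal $f^{-1}[A]\C U$ for any $[A]\in\E$. Let $[A]\in\O_1$, $[B]\in\O_2$. By Proposition \ref{3.1},
\[
\dim_\C\Hom_{\C U}(\C\O_1,\C\O_2)=\dim_\C\Hom_{\C U}(\C\O^r_A,\C\O^r_B)=|\O^l_A\cap\O^r_B|,
\]
and since the left hand side equals $\langle\chi_1,\chi_2\rangle_U$, where $\chi_i$ is the character afforded by $\C\O_i$, the modules $\C\O_1$ and $\C\O_2$ are disjoint precisely when $\O^l_A\cap\O^r_B=\emptyset$.

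Suppose $\O^l_A\cap\O^r_B\neq\emptyset$ and choose $[C]$ in this intersection. Since $[C]\in\O^r_B$ we have $\O^r_C=\O^r_B=\O_2$, so $\C\O_2=\C\O^r_C$. Since $[C]\in\O^l_A$, there is $u\in U$ with $[C]=u[A]$, and because the left and right actions of $U$ on $\E$ commute, left multiplication by $f^{-1}(u)\in\C U$ restricts to a right $\C U$-module isomorphism $\C\O^r_A\xrightarrow{\ \sim\ }\C\O^r_C$. Composing, $\C\O_1\cong\C\O_2$. Hence either $\C\O_1\cong\C\O_2$, or $\O^l_A\cap\O^r_B=\emptyset$ and $\C\O_1$ and $\C\O_2$ are disjoint.
\end{proof}
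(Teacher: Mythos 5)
Your proof is correct and follows essentially the same route as the paper: apply Proposition \ref{3.1}, and when $\O^l_A\cap\O^r_B\neq\emptyset$ use that left multiplication by the relevant $u\in U$ is an invertible right-module map carrying $\C\O_1$ onto $\C\O_2$ (the paper leaves the ``zero Hom-space implies disjoint'' step implicit, which you correctly justify via the inner product of characters). One cosmetic slip: you should multiply by $u\in U\subseteq\C U$ itself, not by $f^{-1}(u)$, since $f^{-1}$ is defined on $V$, not on $U$; and $[C]\in\O^l_A$ gives $u[A]=\kappa[C]$ only up to a nonzero scalar $\kappa$, which of course does not affect the argument.
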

\begin{proof}
 Let $\Hom_{\C U}(\C \O_1, \C \O_2)\not=(0)$. Let $[A]\in \O_1$ and let $\tilde \O_1$ be the left $U$-orbit in $\E$ generated by $[A]$. By \ref{3.1} we find $[B]\in \tilde \O_1\cap \O_2$, thus there exists $u\in U$ such that $u[A]=\kappa[B]$ with $[B]\in \O_2, \kappa\in \C^*$. Thus $u [A] \C U=[B]\C U=\C \O_2$ and left multiplication by $u$ maps $\C \O_1$ surjectively onto $\C \O_2$. But left multiplication by $u$ is invertible and therefore $\C \O_1\cong \C \O_2$, as desired. 
\end{proof}

\begin{Defn}\label{3.4}
 Let $[A]\in \E$. The {\bf projective stabilizer} $\Pstab_u[A]$ of $[A]$ in $U$ is defined to be
\[
\Pstab_U[A]=\{u\in U\,|\, [A] u=\lambda_u [A], \, \lambda_u \in \C ^*\}.
\]
Thus $\Pstab_U[A]$ acts on $\C [A]$ by the linear character $\tau: u\mapsto \lambda_u\in \C^*$. By general theory we have
\[
[A]\C U\cong \Ind^U_{\Pstab_U [A]} \C_\tau,
\]
where $\C_\tau$ denotes the one dimensional $\C \Pstab_u[A]$-module affording $\tau$. The left projective stabilizer $\Pstab_U^{\ell}[A]$ is defined analogously.
\end{Defn}

We illustrate the action of root subgroups $X_{kl}$ ($1 \leqslant l <k \leqslant n$) on idempotents as follows
\begin{equation}
 \begin{picture}(100,100)
\put(0, 40){$[A]=$}
 \put(40,0) {\line(1,0){90}}
 \put(40,0) {\line(0,1){90}}
 \put(40,90) {\line(1,-1){90}}
\put(60,20) {\line(1,0){50}}
 \put(60,20) {\line(0,1){50}}
\put(60,20){\circle*{2}}
\put(50,12){\tiny$(i,j)$}
\put(75,12){\tiny$(i,l)$}
\put(80,20){\circle*{2}}
\put(60,50){\circle*{2}}
\put(115,18){\footnotesize$i$}
\multiput(80,20)(0,6.2){5}{\line(0,1){5}}
\multiput(60,50)(5,0){4}{\line(1,0){4.5}}
\put(84,49){\footnotesize$l$}
\put(62,72){\footnotesize$j$}
\put(43,49){\tiny$(l,j)$}
\end{picture}      
\end{equation}
Suppose $[A]\in \E, A_{ij}\not=0$ ($1\leqslant j< i \leqslant n$). Let $j<l<i$. Then left operation on $[A]$ by $x_{il}(\alpha), \alpha\in \F_q$ adds $-\alpha$ times row $i$ to row $l$, hence changes entry $(l,j)$ of $A$ to $A_{lj}-\alpha A_{ij}$, leaving all entries not below the diagonal unchanged to be zero by truncation. Similarly right operation on $[A]$ by $x_{lj}(\beta), \beta\in \F_q$ adds $-\beta$ times column $j$ to column $i$ and truncating the resulting matrix, and hence changing entry $A_{il}$ to $A_{il}-\beta A_{ij}$. By \ref{2.9} the resulting idempotents have to be multiplied by the scalar $\theta(\alpha A_{il})$ in the first and $\theta(\beta A_{lj})$ in the second case to complete the monomial action of \ref{2.9}. Using this we obtain the following result immediately.

\begin{Theorem}\label{3.6}
 Let $[A]\in \E$ be a right template with $\main[A]=\p_{_A}=\{(i_1,j_1),\ldots,(i_k,j_k)\}\subseteq \Phi^+$. Then $\Pstab_U[A]$ is a pattern subgroup $U_{\cal R}$ with 
\[
{\cal R}=\{(r,s)\in \Phi^+\,|\,s\notin \{j_1,\ldots,j_k\}\}\cup\{(r,j_\nu)\,|\,\nu=1,\ldots,k, i_\nu\leqslant r \leqslant n\}.
\] 
Thus $\p_{_A}\subseteq {\cal R}$ and ${\cal R}^\circ ={\cal R}\setminus \p_{_A}$ is closed. Moreover $U_{{\cal R}^\circ }$ acts trivially on $[A]$, $U_{{\cal R}^\circ }\trianglelefteq U_R$ and $U_{\cal R}/U_{{\cal R}^\circ }\cong X_{i_1 j_1}\times \cdots\times X_{i_k j_k}$
acting on $[A]$ by the linear character $\theta_A=\theta_1\times \theta_2\times \cdots \theta_k$, where
$\theta_{\nu}: X_{i_\nu j_\nu}\rightarrow \C^*$ sends $x_{i_\nu j_\nu}(\alpha)$ to $ \theta(A_{i_\nu j_\nu}\cdot \alpha)\in \C^*$ for $\alpha\in \F_q,\, \nu=1,\ldots,k$.
\end{Theorem}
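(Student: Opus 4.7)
The plan is to read off $\Pstab_U[A]$ and its character by computing, via Corollary \ref{2.9}, the right action of each individual root subgroup $X_{rs}$ on the template $[A]$, and then to assemble these computations using that a pattern subgroup $U_{\mathcal R}$ is generated by the root subgroups indexed by $\mathcal R$ whenever $\mathcal R$ is closed. The key formula is
\[
[A]x_{rs}(\alpha) = \theta(\alpha A_{rs})\,[A\ldotp x_{rs}(\alpha)],
\]
where $A\ldotp x_{rs}(\alpha)$ is obtained from $A$ by adding $-\alpha$ times column $s$ to column $r$ and then truncating on-or-above the diagonal.

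The case analysis splits on whether $s$ is the column of a main condition. If $s\notin\{j_1,\dots,j_k\}$, then column $s$ of $A$ is identically zero (a template has nonzero entries only in columns of main conditions), so both the column operation and the scalar $\theta(\alpha A_{rs})$ are trivial, and $X_{rs}$ fixes $[A]$. If $s=j_\nu$ and $r\geqslant i_\nu$, then for every $t>r\geqslant i_\nu$ the entry $A_{t,j_\nu}$ vanishes (it sits below the main condition $(i_\nu,j_\nu)$), so the column operation again does nothing; the scalar is $1$ when $r>i_\nu$ and $\theta(\alpha A_{i_\nu j_\nu})=\theta_\nu(x_{i_\nu j_\nu}(\alpha))$ when $r=i_\nu$. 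Finally, if $s=j_\nu$ and $j_\nu<r<i_\nu$, the position $(i_\nu,r)$ lies on the arm $h_{i_\nu j_\nu}^a$ so $A_{i_\nu,r}=0$ by the template condition, and the operation replaces it with $-\alpha A_{i_\nu j_\nu}\neq 0$; hence $x_{r,j_\nu}(\alpha)\notin\Pstab_U[A]$ for $\alpha\neq0$. This identifies the root subgroups in $\Pstab_U[A]$ as precisely those indexed by $\mathcal R$, acting trivially on $\mathcal R^\circ$ and by $\theta_\nu$ on $X_{i_\nu j_\nu}$.

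Closedness of $\mathcal R$ reduces to checking that if $(r,s),(s,t)\in\mathcal R$ with $t=j_\nu$, then $s\geqslant i_\nu$ and so $r>s\geqslant i_\nu$, giving $(r,t)\in\mathcal R$. The same argument shows $\mathcal R^\circ$ is closed: a composed root $(r,t)=(i_\mu,j_\mu)\in\p_A$ would force $s\geqslant i_\mu=r$, contradicting $r>s$. Closedness of $\mathcal R$ yields that $U_{\mathcal R}$ is the pattern subgroup generated by the root subgroups $\{X_{rs}:(r,s)\in\mathcal R\}$, so by the previous step $U_{\mathcal R}\subseteq\Pstab_U[A]$. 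Normality $U_{\mathcal R^\circ}\trianglelefteq U_{\mathcal R}$ is then a standard Chevalley-commutator computation: the only nontrivial commutator $[x_{rs}(\alpha),x_{tu}(\beta)]$ lies in an $X_{r'u'}$ whose index $(r',u')$ falls in $\mathcal R^\circ$ by the very same case argument used for closedness. Together with the fact that $U_{\mathcal R^\circ}$ acts trivially on $[A]$, this gives the semidirect decomposition $U_{\mathcal R}/U_{\mathcal R^\circ}\cong X_{i_1j_1}\times\cdots\times X_{i_kj_k}$ and the character $\theta_A=\theta_1\times\cdots\times\theta_k$.

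The reverse inclusion $\Pstab_U[A]\subseteq U_{\mathcal R}$ is the one step that needs a bit of care. The approach is to fix a linear ordering on $\Phi^+$ placing all roots of $\mathcal R$ before those of $\mathcal T=\Phi^+\setminus\mathcal R$, so that every $u\in U$ factors uniquely as $u=u_1u_2$ with $u_1\in U_{\mathcal R},\,u_2\in U_{\mathcal T}$. Since $u_1\in\Pstab_U[A]$, one must show $A\ldotp u_2=A$ implies $u_2=1$. The computation above shows that the single root element $x_{r,j_\nu}(\alpha)$ with $(r,j_\nu)\in\mathcal T$ creates a nonzero entry precisely at position $(i_\nu,r)$, and these target positions are pairwise distinct as $(r,j_\nu)$ runs over $\mathcal T$ (distinct $\nu$ give distinct rows $i_\nu$, and fixed $\nu$ with distinct $r$ give distinct columns). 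With a suitable ordering on $\mathcal T$ one checks that the successive truncated column operations preserve these newly introduced entries, so the assignment $u_2\mapsto A\ldotp u_2$ is injective on $U_{\mathcal T}$. This is the main obstacle in the proof — verifying that the non-stabilizing operations do not secretly cancel when composed — and it is settled by the disjointness of the affected positions together with induction on the chosen ordering of $\mathcal T$.
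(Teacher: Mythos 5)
Your proposal is correct and follows essentially the same route as the paper, which simply records the effect of each root subgroup on $[A]$ via Corollary \ref{2.9} and declares the theorem immediate; you make explicit the case analysis, the closedness checks, and the reverse inclusion $\Pstab_U[A]\subseteq U_{\cal R}$. One small caveat on that last step: when $[A]$ has supplementary conditions, an operation $x_{r,j_\mu}(\alpha)$ can also alter the witness entry at position $(i_\nu,r)$ for $\nu\neq\mu$ (namely when $A_{i_\nu j_\mu}\neq 0$), so these entries are not literally ``preserved''; ordering the factors by decreasing target column one finds instead that they satisfy a triangular system whose diagonal coefficients are the nonzero values $A_{i_\nu j_\nu}$, which still gives the injectivity of $u_2\mapsto A\ldotp u_2$ that you need.
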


Thus ${\cal R}$ consists of all positions in $\Phi^+$ in zero columns of $A$ together with all positions on and below the positions in $\p_{_A}$.

\smallskip

Analogously $[A]\in \E$ is a {\bf left template}, if the nonzero entries of $A$ are only in rows, containing a main condition, and are besides the main conditions itself to the right of main conditions and not on hook intersections, that is below  main conditions. Define
\[
{\cal L} =\{(i,j)\in \Phi^+\, | \, i\notin\{i_1,\ldots,i_k\}\}\cup \{(i_\nu,l)\,|\, \nu=1,\ldots,k \text{ and } 1\leqslant l\leqslant j_\nu\}. 
\]
Thus $\cal L$ consists of all positions in zero rows of $[A]$ together with all position to the left of main conditions including main conditions. Set ${\cal L} ^\circ={\cal L}\setminus \p_{_A}$, then $U_{\cal L}=\Pstab_U^l [A]$, $U_{{\cal L} ^\circ}$ is normal in $U_{\cal L}$ and acts trivially on $[A]$ from the left and $U_{\cal L}/U_{{\cal L} ^\circ}=U_{\cal R}/U_{{\cal R} ^\circ}=X_{i_1 j_1}\times \cdots\times X_{i_k j_k}$ acting on $[A]$ by the left with the linear character $\theta_A$ again.

\bigskip

We describe now orbit modules in more detail. In view of theorem \ref{2.13} we may concentrate on verge orbits, that is orbits generated by some verge idempotent $[A]\in \E$. Let $\p=\p_{_A}=\main[A]$. Thus $A=\sum_{(i,j)\in \p} A_{ij}\epsilon_{ij}$. Let $\p=\{(i_1,j_1),\ldots,(i_k,j_k)\}\subseteq \Phi^+$.  Denote the hook centered at $(i_\nu, j_\nu) $ by $h_\nu, \nu=1,\ldots,k $. Set $a_\nu=i_\nu-j_\nu-1, a=a_1+\cdots+a_k$, and let $b$ be the number of hook intersections of the hooks $h_\nu$, $\nu=1,\ldots,k$.  Let $\T_A$ be the set of templates $[C]$ with $\verge[C]=[A]$. Thus $[C]\in \T_A$ if and only if $C_{i_\nu j_\nu}=A_{i_\nu j_\nu}, \nu=1,\ldots,k$ and the only other nonzero entries in $C$ are on the hook legs $h_\nu ^l$ different from the hook intersections.
We illustrate this in the following example:

\begin{Example}\label{3.7}
Suppose $[A]\in \E$ is a verge idempotent and let $\p=\{(i,j), (r,s)\}=\main[A]$ with $1\leqslant j<s<i<r\leqslant n$. Then $h_{ij}\cap h_{rs}=(i,s)$:
\begin{center}
 \begin{picture}(140,160)
  \put(0,0){\line(1,0){160}}
\put(0,0){\line(0,1){160}}
\put(0,160){\line(1,-1){160}}
\put(30,50){\line(1,0){80}}
\put(30,50){\line(0,1){80}}
\put(50,20){\line(0,1){90}}
\put(50,20){\line(1,0){90}}
\put(17,39){$(i,j)$}
\put(52,39){$(i,s)$}
\put(40,10){$(r,s)$}
\put(30,130){\circle*{3}}
\put(32,135){$j$}
\put(50,110){\circle*{3}}
\put(52,113){$s$}
\put(110,50){\circle*{3}}
\put(115,51){$i$}
\put(140,20){\circle*{3}}
\put(145,21){$r$}
\multiput(110,50)(0,-6.4){5}{\line(0,-1){4}}
\put(100,10){$(r,i)$}
\put(106,17){$\times$}
\multiput(30,110)(6.8,0){3}{\line(1,0){4}}
\put(5,105){$(s,j)$}
\put(26,107.2){$\times$}
 \end{picture}
\end{center}
Then $ [C]\in \T_A$ if and only if $A_{ij}=C_{ij}, A_ {rs}=C_{rs}$ and $C_{is}=0$, where all other nonzero entries of $C$ are on positions in columns $j$ and $s$ above $(i,j)$ and $(r,s)$ respectively. Inspecting the action of root subgroups $X_{rt}$ on $[A]$ from the left, we observe that for $\alpha\in \F_q$ using the left hand analog of corollary \ref{2.7}:
\[x_{rt}(\alpha) [A]=\theta(\alpha A_{rt})[B],\]
where  $B\in V$ is obtained from $A$ by adding $-\alpha$ times row $r$ to row $t$ in $A$ and setting in the resulting matrix all entries above and on the main diagonal back to zero. Since the only non zero entry in row $r$ of $A$ is $A_{rs}$, we have $\theta(\alpha A_{rt})=\theta(0)=1$, for $t\neq s$. Moreover, if $t\leqslant s$, we obtain by truncation $[B]=[A]$. For $s<t<r$, $B$ differs from $A$ only at position $(t,s)$, indeed $B_{ts}=-\alpha A_{rs}\neq 0$, since $A_{ts}=0$. Sequences of truncated row operations by the action of root subgroups $X_{rt}, 1\leqslant t<r$ on row $r$ will just place arbitrary entries on the hook leg $h_{rs}^l$ leaving other entries of $A$ unchanged. Similarly left action by sequences of elements of root subgroups $X_{it}$ fills the positions of $h_{ij}^\ell$ with arbitrary values of $\F_q$, leaving other entries of $A$ unchanged. All other root subgroups $X_{ab}$ not in row $r$ or row $i$ (i.e. $a\neq r, a\neq i$) belong to the projective left stabilizer of  $[A]$ in $U$. We conclude that $|\O^l_A|=q^a$ with $a=(i-j-1)+(r-s-1)=|h_{ij}^\ell|+|h_{rs}^\ell|$. Similarly for the right action of $U$ on $[A]$ we see that $X_{ab}\in \Pstab_U [A]$ for $s\neq b\neq j$. The right action of $X_{ts}, s<t<r$ fills position $(r,t)$ on $h_{rs}^a$ with arbitrary chosen values of $\F_q$. Replacing $(r,s)$ by $(i,j)$ the root subgroups $X_{tj}$, $j<t<i$ fill the positions on $h_{ij}^a$ in row $i$ of $A$ with arbitrary elements of $\F_q$. We conclude again, since $|h_{ij}^a|=|h_{ij}^\ell|=i-j-1$, $|h_{rs}^a|=|h_{rs}^\ell|=r-s-1,$ that $|\O^r_A|=q^a$.

Finally acting by $X_{ri}$ from the left as well as acting by $X_{sj}$ from the right on $[A]$ will fill position $(i,s)$ and leave all other entries of $A$ unchanged. Thus $\O_A^l\cap \O_A^r$ is precisely the set of idempotents $[B]\in \E$ with $A_{rs}=B_{rs}, A_{ij}=B_{ij}, B_{is}\in \F_q$ all other entries of $B$ being zero. We conclude $|\O_A^l\cap \O_A^r|=q=q^1$, that is $b=1$ and hence $|\T_A|=q^{a-b}=q^{a-1}$.
\end{Example}

\begin{Theorem}\label{3.8}
Let the verge $[A]\in \E$ be a verge with main conditions $\p=\p_{_A}$. Keeping the notation introduced above we have
\begin{enumerate}
\item [1)] $\O_A^r$ consists of all idempotent arising by filling all positions in $h_\nu^a$ ($\nu=1, \ldots, k$) with arbitrary elements of $\F_q$. Similarly $\O_A^l$ consists of all idempotents $[B]\in \E$, where $B$ coincides at main conditions with $A$ and where the hook legs $h_\nu^\ell$ are filled by arbitrary elements from $\F_q$.
\item [2)] $|\O_A^l|=|\O_A^r|=q^a$.
\item [3)] $|\T_A|=q^{a-b}$.
\item [4)] For every $[B]\in \T_A$ there exists $u\in U$ such that $u[A]=[B]$. In particular $\C \O^r_B\cong \C \O_A^r$.
\end{enumerate}
\end{Theorem}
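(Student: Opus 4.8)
The plan is to prove the four assertions of Theorem \ref{3.8} by directly generalizing the bookkeeping carried out in Example \ref{3.7}, exploiting the monomial description of the right (and left) action of root subgroups on idempotents given in Corollary \ref{2.9} together with the structure of the projective stabilizers from Theorem \ref{3.6}. Throughout write $\p=\{(i_1,j_1),\dots,(i_k,j_k)\}=\main[A]$, so $A=\sum_\nu A_{i_\nu j_\nu}\epsilon_{i_\nu j_\nu}$ with all $A_{i_\nu j_\nu}\neq 0$, and recall the columns $j_1,\dots,j_k$ are pairwise distinct and the rows $i_1,\dots,i_k$ are pairwise distinct.

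For assertion 1), the key observation is that a root subgroup $X_{rs}$ acts trivially on $[A]$ (up to scalar) unless $r=i_\nu$ for some $\nu$: if $r\notin\{i_1,\dots,i_k\}$ then $(r,s)\in{\cal R}$ by Theorem \ref{3.6}, so $X_{rs}\leqslant\Pstab_U[A]$. For $r=i_\nu$, the right action of $x_{i_\nu t}(\beta)$ adds $-\beta$ times column $j_\nu$ to column $t$ and truncates; since the only nonzero entry of $A$ in row $i_\nu$ is $A_{i_\nu j_\nu}$, this changes only the entry at position $(i_\nu,t)$ (to $-\beta A_{i_\nu j_\nu}$ when $j_\nu<t<i_\nu$, i.e. on the arm $h_\nu^a$, and to nothing after truncation when $t\leqslant j_\nu$). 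Applying these operations in a fixed order over all $(i_\nu,t)$ with $t\in h_\nu^a$ — and here one must note that operations in row $i_\nu$ with $\nu\ne\mu$ never disturb row $i_\mu$, because the arms $h_\nu^a$ lie in pairwise distinct rows — shows that $\O_A^r$ is exactly the set of idempotents obtained by placing arbitrary $\F_q$-entries on $\bigcup_\nu h_\nu^a$; the scalar $\theta(\beta A_{i_\nu t})=\theta(0)=1$ since column $t$ had a zero at row $i_\nu$ before the operation, so the action is genuinely permutation-like on this set and no idempotent is hit twice. The statement for $\O_A^l$ is the transpose of this argument using the left-hand analogue of Corollary \ref{2.9}.

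Assertion 2) is then immediate: the description in 1) puts $\O_A^r$ in bijection with $\F_q^{\bigcup_\nu h_\nu^a}$, which has $q^{\sum_\nu|h_\nu^a|}=q^{\sum_\nu(i_\nu-j_\nu-1)}=q^a$ elements, and symmetrically for $\O_A^l$. For assertion 3), recall $\T_A$ is the set of templates with verge $[A]$: by Theorem \ref{3.6} and the definition of template, $[C]\in\T_A$ means $C$ agrees with $A$ on $\p$ and has arbitrary entries on the union of hook legs $h_\nu^\ell$ except that entries on hook intersections must be zero. A position is a hook intersection when it lies simultaneously on some $h_\nu^\ell$ and — after the relevant reductions — forces a nonzero entry on some other arm; counting, $\bigcup_\nu h_\nu^\ell$ has $a$ positions and exactly $b$ of them are hook intersections (this is the definition of $b$), so the free positions number $a-b$ and $|\T_A|=q^{a-b}$. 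I would make the identification of $\suppl(\p)$ with the $a-b$ non-intersection leg positions precise by the same local analysis as in Example \ref{3.7}, tracking which leg entries can be independently prescribed.

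Finally, assertion 4): given $[B]\in\T_A$, I want $u\in U$ with $u[A]=[B]$, i.e. to reach $[B]$ from $[A]$ by left multiplication only. By 1) (left version), left multiplication by elements of the $X_{i_\nu t}$ fills the hook legs $h_\nu^\ell$ with arbitrary values. The subtlety — and this is where I expect the main obstacle — is that $[B]$ already has prescribed zeros on the hook intersections, while naively filling legs could be contaminated by the scalar factors $\theta(\cdot)$ or by later operations overwriting earlier leg entries through truncation when several main conditions interact. One must choose the order of the row operations carefully (processing, say, main conditions from the rightmost column $j_\nu$ to the leftmost, or bottom-up) so that once a leg position is set it is never changed again, and verify using the left-hand Corollary \ref{2.9} that the accumulated scalar is $1$ because each operation acts on a position that is still zero at the moment it is performed; this forces us to target precisely the template $[B]$ (whose intersection entries vanish) rather than an arbitrary idempotent in $\O_A^l$. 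Once such a $u$ is produced, $u[A]\C U=[B]\C U$ and left multiplication by the invertible element $u$ gives the isomorphism $\C\O_A^r=[A]\C U\xrightarrow{\ \sim\ }[B]\C U=\C\O_B^r$, completing the proof.
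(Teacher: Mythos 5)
Your overall strategy is exactly the paper's: the printed proof of Theorem \ref{3.8} is literally ``inspect Example \ref{3.7}'', and you are generalizing that inspection via Corollary \ref{2.9} and Theorem \ref{3.6}. However, the key step of part 1) as you wrote it is false. You claim that $X_{rs}$ fixes $[A]$ projectively whenever $r\notin\{i_1,\dots,i_k\}$, ``since then $(r,s)\in\cR$''. That is the criterion for the \emph{left} action. By Theorem \ref{3.6}, $(r,s)\in\cR$ iff $s$ is a zero column of $A$ or $s=j_\nu$ with $r\geqslant i_\nu$; e.g.\ for $\p=\{(3,1)\}$ the position $(2,1)$ has $r=2\notin\{3\}$ but lies in $h_{31}^\ell\subseteq\Phi^+\setminus\cR$, and $X_{21}$ genuinely moves $[A]$ (it fills $(3,2)\in h_{31}^a$). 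Correctly: the root subgroups acting nontrivially on the right are $X_{t j_\nu}$ with $(t,j_\nu)\in h_\nu^\ell$, and $[A']x_{t j_\nu}(\beta)=\theta(\beta A'_{t j_\nu})[A'\ldotp x_{t j_\nu}(\beta)]$ adds $-\beta$ times column $j_\nu$ to column $t$, changing only $(i_\nu,t)\in h_\nu^a$; the scalar is $1$ because the \emph{leg} entry $A'_{t j_\nu}$ (not the arm entry) stays zero throughout. Your element $x_{i_\nu t}(\beta)$ performs a different column operation altogether. The same mix-up infects your non-interference claim: the issue is not distinctness of the rows $i_\nu$ but that a target column $t$ may itself be a pivot column $j_\mu$ of another main condition, so a later operation with pivot $j_\mu$ also touches row $i_\nu$ of its target. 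One either orders the operations so each pivot column is used before it is altered, or, more cheaply, notes that every truncated column operation keeps the idempotent inside the set of arm-fillings, which has exactly $q^a=[U:U_\cR]=|\O_A^r|$ elements by Theorem \ref{3.6}, forcing equality.

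Parts 2) and 3) are fine once 1) is repaired: each hook intersection lies on exactly one leg and one arm, so exactly $b$ of the $a$ leg positions are forbidden in a template, giving $|\T_A|=q^{a-b}$. For part 4) your concern about scalars and overwriting is legitimate and your resolution (choose the order so every operation acts where the current entry is still zero, which is possible precisely because $[B]$ vanishes at the hook intersections) is the right one; alternatively, even $u[A]=\kappa[B]$ with $\kappa\in\C^*$ already yields $u[A]\C U=[B]\C U$ and hence $\C\O_B^r\cong\C\O_A^r$, which is all that is used later. So the proposal is salvageable and follows the paper's route, but as written the identification of $\Pstab_U[A]$ and of the orbit-generating root subgroups for the right action is wrong and must be transposed.
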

\begin{proof}
Everything follows at once by inspecting example \ref{3.7}.
\end{proof}

Note that part 4) of \ref{3.8} reproves part of Yan's theorem \ref{2.13}. Moreover, using \ref{3.1}, one easily derives a complete proof of \ref{2.13}, proving that $\C \O^r_A\cong \C \O_B^r$ for a template $[B]\in \E$ if and only if $[B]\in \T_A$. Of course there is an analogous statement, using left templates, for left orbits. Since $\C \E$ is isomorphic to $\C U$ as $U$-$U$-bimodule we conclude, that the biorbit module $\C U[A] \C U$ generated by a verge $[A]\in \E$ is the direct sum of Wedderburn components of $\C U$. In particular, if $S$ is an irreducible submodule of $\C \O_A^r$, $S$ occurs in $\C U_{\C U}$ with multiplicity at least $|\T_A|=q^{a-b}$, and hence $\dim_\C S\geqslant q^{a-b}$, where $a$ and $b$ are defined as above. We have shown:

\begin{Theorem}\label{3.10}
Let $[A]\in \E$ be a verge, and let  $S$ be an irreducible constituent of $\C \O_A^r$. Then $\dim_\C S \geqslant q ^{a-b}$. If $\dim_\C S=q^{a-b+m}$ for some $m\in \N$, then the multiplicity of $S$ as constituent of $\C\O_A^r$ is given as $q^m$.
\end{Theorem}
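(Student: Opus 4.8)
The plan is to exploit the fact, established in the paragraph preceding the statement, that the biorbit module $\C U[A]\C U$ is a direct sum of Wedderburn components of $\C U$, so that any irreducible $S$ occurring in $\C\O_A^r$ actually occurs in $\C U$ with its full multiplicity $=\dim_\C S$, and this full multiplicity is entirely accounted for by the copies of $S$ sitting inside the right orbit modules $\C\O_B^r$ with $[B]\in\T_A$. First I would make precise the counting of right templates in the relevant Wedderburn component: by Theorem \ref{2.13} (together with part 4) of Theorem \ref{3.8}), the right orbit modules $\C\O_B^r$ for $[B]$ ranging over all templates with $\verge[B]=[A]$ are pairwise isomorphic to $\C\O_A^r$, and for $[B]$ a template with $\verge[B]\neq[A]$ the module $\C\O_B^r$ is disjoint from $\C\O_A^r$. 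Hence the $S$-isotypic part of $\C U$ decomposes along the right orbit modules, and exactly the $|\T_A|=q^{a-b}$ modules $\C\O_B^r$, $[B]\in\T_A$, contribute.

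Next I would fix the multiplicity bookkeeping. Write $m_S$ for the multiplicity of $S$ in the single right orbit module $\C\O_A^r$ (equivalently in each $\C\O_B^r$, $[B]\in\T_A$, since these are isomorphic). Since $\C\E\cong{}_{\C U}\C U_{\C U}$ and the biorbit module $\C U[A]\C U$ is a sum of Wedderburn components, the multiplicity of $S$ in $\C U$ as a right module equals $\sum_{[B]\in\T_A} m_S = q^{a-b}\,m_S$. On the other hand, for any finite-dimensional semisimple algebra the multiplicity of an irreducible $S$ in the regular right module equals $\dim_\C S$. Therefore $\dim_\C S = q^{a-b}\,m_S$. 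Since $m_S\geqslant 1$ this already gives $\dim_\C S\geqslant q^{a-b}$, the first assertion; and writing $\dim_\C S=q^{a-b+m}$ forces $m_S=q^m$, which is the second assertion.

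The only genuine point requiring care — and the step I expect to be the main obstacle — is justifying that the $S$-isotypic component of $\C U$ really is distributed across the right orbit modules $\C\O_B^r$ ($[B]\in\T_A$) \emph{without overlap and without omission}, i.e. that $\C U$ decomposes as a right module into a direct sum of the $\C\O_B^r$ over \emph{all} templates $[B]\in\E$ (one per right orbit), and that among these precisely those with $\verge[B]=[A]$ carry $S$. The first half is Theorem \ref{2.13}: each right $U$-orbit on $\E$ contains a unique template, the orbits partition $\E$, and $\C\E=\bigoplus_{\O}\C\O$ as right $\C U$-modules; transporting along $f^{-1}$ gives the corresponding decomposition of $\C U$. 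The second half is the disjointness clause of Theorem \ref{2.13} combined with Corollary \ref{3.3}: two right orbit modules are either isomorphic (iff same verge) or disjoint, so the $S$-isotypic part is supported exactly on the isomorphism class of $\C\O_A^r$, which by part 4) of Theorem \ref{3.8} consists of the $q^{a-b}$ orbit modules indexed by $\T_A$. Once this is in hand the dimension and multiplicity formulas follow from the semisimple-algebra identity above with no further computation.
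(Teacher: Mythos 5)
Your argument is correct and is essentially the paper's own proof: both rest on the observation that $\C U[A]\C U$ is a sum of Wedderburn components, that the $S$-isotypic part of the regular module is spread over the $|\T_A|=q^{a-b}$ pairwise isomorphic right orbit modules $\C\O_B^r$ with $[B]\in\T_A$, and that equating $q^{a-b}m_S$ with $\dim_\C S$ yields both claims. The extra care you take in justifying the orbit decomposition via Theorem \ref{2.13} and Corollary \ref{3.3} is exactly what the paper relies on implicitly in the paragraph preceding the theorem.
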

\begin{proof}
We already proved that $q^{a-b}$ is a lower bound for the dimension of $S$. Let $\dim_\C S=q^{a-b+m}$ and let $s$ be the multiplicity of $S$ as irreducible constituent in $\C \O_A^r$. Then $s$ is the multiplicity of $S$ in $\C \O_B^r$ for all $[B]\in \T_A$ and so  $s\cdot q^{a-b}$ is the multiplicity of $S$ as constituent in the regular $U$-module $\C U_{\C U}$, and hence its dimension as well. So $s\cdot q^{a-b}=q^{a-b+m}$, proving $s=q^m$.
\end{proof}

In the setting of theorem \ref{3.10} we obtain as well an upper bound for the dimension of $S$ observing that $q^m \dim_\C S$ has to be less or equal the size $q^a$ of $\O^r_A$. So $q^m q^{a-b+m}\leqslant q^a$, that is $2m\leqslant b$.

\begin{Cor}\label{3.10n}
Let $[A] \in \E$ be a verge and let $S$ be an irreducible constituent of $\c \O_A^r$. Let $a,b$ be defined as above. Then
\[q^{a-b}\leqslant \dim_\C S \leqslant q^{a+c}\]
where $c$ is the largest integer less or equal $-\frac{b}{2}$. 
\end{Cor}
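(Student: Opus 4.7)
The lower bound is exactly Theorem \ref{3.10}, so only the upper bound needs attention, and the informal paragraph immediately preceding the corollary already points the way. The plan is to combine the multiplicity formula of \ref{3.10} with the obvious fact that the isotypic component of $S$ inside $\C \O_A^r$ cannot exceed $\C \O_A^r$ itself in dimension.

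More precisely, by the lower bound in \ref{3.10} I can write $\dim_\C S = q^{a-b+m}$ for some integer $m\geqslant 0$. Theorem \ref{3.10} then tells me that $S$ occurs in $\C\O_A^r$ with multiplicity exactly $q^m$, so the $S$-isotypic component of $\C\O_A^r$ has $\C$-dimension
\[
q^m\cdot q^{a-b+m}=q^{a-b+2m}.
\]
Since this is bounded above by $\dim_\C \C\O_A^r=|\O_A^r|=q^a$ (using part 2 of Theorem \ref{3.8}), comparing exponents gives $2m\leqslant b$.

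The final step is the elementary translation from $2m\leqslant b$ to the formulation in the statement. Because $m$ is an integer we have $m\leqslant \lfloor b/2\rfloor$, and $\lfloor b/2\rfloor = b-\lceil b/2\rceil=b+\lfloor -b/2\rfloor=b+c$, where $c$ is the largest integer $\leqslant -b/2$ as in the hypothesis. Therefore $a-b+m\leqslant a+c$, which yields
\[
\dim_\C S = q^{a-b+m}\leqslant q^{a+c},
\]
completing the upper bound. There is no real obstacle here; the only point requiring a touch of care is the bookkeeping with the floor/ceiling identity $b+\lfloor -b/2\rfloor=\lfloor b/2\rfloor$, which distinguishes the even and odd cases of $b$ but does not affect the argument.
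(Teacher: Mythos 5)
Your proposal is correct and is essentially the paper's own argument: the paragraph immediately preceding the corollary derives $2m\leqslant b$ by comparing the dimension $q^m\cdot q^{a-b+m}$ of the $S$-isotypic component with $|\O_A^r|=q^a$, exactly as you do. The remaining floor/ceiling bookkeeping $-b+\lfloor b/2\rfloor=\lfloor -b/2\rfloor=c$ is carried out correctly.
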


We say $S\leqslant \C \O_A^r$ has   {\bf minimal dimension} if and only if $\dim_\C (S)=q^{a-b}$. By standard arguments on endomorphism rings of semisimple modules we conclude:
\begin{Cor}\label{3.11}
Let $[A]\in \E$ be a verge, $S\leqslant \C \O_A^r$ be irreducible and let  $\epsilon_{_S}: \C \O_A^r \rightarrow S \hookrightarrow \C \O_A^r $ be the canonical projection. Thus   $\epsilon_{_S}\in E=\End_{\C U}(\C \O_A^r)$. Then the following  holds:
\begin{enumerate}
\item [1)] $S=\C \O_A^r$, that is $\C \O_A^r$ is irreducible  if and only if   the hooks centered at the main conditions do not intersect, (Yan).
\item  [2)] If $\dim_\C S=q^{a-b+m}$, $a, b,m\in \N$ as above, then $\epsilon_{_S} E$ is an irreducible $E$-module of dimension $q^m$.
\item  [3)] $S$ has minimal dimension if and only if $\epsilon_{_S}E=\C \epsilon_{_S}$ is a  one dimensional $E$-module.
\end{enumerate}
\end{Cor}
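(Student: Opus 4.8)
The plan is to realize $E=\End_{\C U}(\C\O_A^r)$ as a product of matrix algebras, read off in which block the idempotent $\epsilon_{_S}$ sits, and then deduce 2) and 3) from Theorem \ref{3.10} and 1) from the dimension bounds of Corollary \ref{3.10n}. Write $M=\C\O_A^r$. Since $\Char\C=0$, the group algebra $\C U$ is semisimple and $\C$ is a splitting field for $U$, so $M$ is a semisimple $\C U$-module; fix an isotypic decomposition $M=\bigoplus_i S_i^{\oplus\mu_i}$ with the $S_i$ pairwise non-isomorphic irreducibles and $\mu_i\geqslant 1$. By Theorem \ref{3.10}, writing $\dim_\C S_i=q^{a-b+m_i}$ with $m_i$ a nonnegative integer, the multiplicity of $S_i$ in $M$ is $\mu_i=q^{m_i}$. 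By the standard structure theory of semisimple modules over the algebraically closed field $\C$ there is an algebra isomorphism $E\cong\prod_i M_{\mu_i}(\C)$ under which $\epsilon_{_S}$ — say $S\cong S_{i_0}$ — corresponds to a rank-one idempotent of the block $M_{\mu_{i_0}}(\C)$ and to $0$ in the remaining blocks; equivalently $\epsilon_{_S}E\cong\Hom_{\C U}(M,S)$ as right $E$-modules.

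First I would treat 2): from the description above $\epsilon_{_S}E$ is a single row of $M_{\mu_{i_0}}(\C)$ on which the other blocks of $E$ act as $0$, hence is the (up to isomorphism unique) simple right $M_{\mu_{i_0}}(\C)$-module, so it is an irreducible $E$-module of $\C$-dimension $\mu_{i_0}=q^{m_{i_0}}$, i.e.\ $q^m$ with $m=m_{i_0}$; alternatively $\dim_\C\epsilon_{_S}E=\dim_\C\Hom_{\C U}(M,S)$ equals the multiplicity $q^m$ of $S$ in $M$ directly. Part 3) is then formal: $S$ has minimal dimension exactly when $q^{a-b}=\dim_\C S=q^{a-b+m}$, i.e.\ $m=0$, i.e.\ $\dim_\C\epsilon_{_S}E=1$ by 2); and since $\epsilon_{_S}=\epsilon_{_S}\cdot 1_E\in\epsilon_{_S}E$, one-dimensionality of $\epsilon_{_S}E$ is the same as $\epsilon_{_S}E=\C\epsilon_{_S}$, and conversely.

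For 1), using $\dim_\C M=|\O_A^r|=q^a$ from Theorem \ref{3.8}: if $b\geqslant 1$ then the integer $c$ of Corollary \ref{3.10n} satisfies $c\leqslant -1$, so every irreducible constituent $S$ has $\dim_\C S\leqslant q^{a+c}\leqslant q^{a-1}<\dim_\C M$ and $M$ is reducible; if $b=0$ then $\dim_\C S\geqslant q^{a-b}=q^a=\dim_\C M$ forces $M=S$ irreducible. Equivalently in terms of $E$: comparing dimensions gives $q^a=\sum_i\mu_i\dim_\C S_i=q^{a-b}\sum_i q^{2m_i}$, so $\dim_\C E=\sum_i\mu_i^2=\sum_i q^{2m_i}=q^b$ — which via Proposition \ref{3.1} also recovers $|\O_A^l\cap\O_A^r|=q^b$ — and $M$ is irreducible iff $E=\C$ iff $b=0$, i.e.\ iff the hooks centered at the main conditions pairwise do not intersect. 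I expect no real obstacle: the substance is carried by Theorems \ref{3.8} and \ref{3.10}, Corollary \ref{3.10n} and Proposition \ref{3.1}, and what remains is the routine identification of $E$ with $\prod_i M_{\mu_i}(\C)$ together with the placement of $\epsilon_{_S}$ in the block indexed by $S$; the corollary is in essence a restatement of Theorem \ref{3.10} at the level of the endomorphism algebra, so the only point requiring care is the bookkeeping between $\dim_\C S$, the multiplicity $\mu_i$, and the parameters $a$ and $b$.
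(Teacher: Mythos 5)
Your proposal is correct and is essentially the argument the paper intends: the paper gives no explicit proof of Corollary \ref{3.11}, stating only that it follows ``by standard arguments on endomorphism rings of semisimple modules'' from Theorem \ref{3.10}, Corollary \ref{3.10n} and Proposition \ref{3.1}, and your write-up supplies exactly those arguments (the block decomposition $E\cong\prod_i M_{\mu_i}(\C)$, the identification of $\epsilon_{_S}E$ with the simple module of its block of dimension equal to the multiplicity $q^m$, and the dimension comparison $b=0\Leftrightarrow E=\C$ for part 1). The bookkeeping with $a$, $b$, $m$ and the observation $\dim_\C E=q^b$ are all consistent with the paper's computations in Example \ref{3.7} and Lemma \ref{4.3}.
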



\section{Endomorphism rings}\label{4}

Throughout this section let $[A]\in\E$ be a verge with main conditions $\p=\p_{_A}=\{(i_1,j_1) \ldots,(i_k,j_k)\}$ $\subseteq \Phi^+$. Recall that $\cR^\circ \subseteq \Phi^+$ denotes the set of all positions in $\Phi^+$ in zero columns in $[A]$, together with all positions strictly below the main conditions in columns $j_1, \ldots,j_k$ in $[A]$ and set again $\cR=\cR^\circ\cup \p$. Similarly we have $\cL^\circ\subseteq \cL \subseteq \Phi^+$ for the left action of $U$, (compare \ref{3.6}). We define
\begin{equation}\label{4.1}
\begin{matrix}
Y^r&=&\{u\in U\,|\, u[A]\in \C \O_A^r\}\\
Y^l&=&\{u\in U\,|\, [A]u\in \C \O_A^l\}
\end{matrix}
\end{equation}
Obviously $Y^r, Y^l$ are subgroups of $U$ and by \ref{3.1} $E^r=\End_{\C U} \C \O_A^r, E^l=\End_{\C U} \C \O_A^l$ are epimorphic images of $\C Y^r$ and $\C Y^l$ respectively. More precisely, since every endomorphism $h\in E^r$ is completely determined by its action on the generator  $[A]$ of $\c \O_A^r$, we have
\begin{Lemma}\label{4.2}
The left annihilator $\mathfrak a_r=\{x\in \C Y^r\,|\, x[A]=0\}$ is an ideal in $\C Y^r$ such that $E^r=\C Y^r/\mathfrak a_r$. Similarly $E^l=\C Y^l/\mathfrak a_l$ for the right annihilator $\mathfrak a_l$ of $[A]$ in $\C Y^l$.
\end{Lemma}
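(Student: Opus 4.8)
The plan is to exploit the observation already made before the statement: since $\C\O_A^r = [A]\C U$ is a cyclic right $\C U$-module with generator $[A]$, any $\C U$-endomorphism $h$ of $\C\O_A^r$ is determined by the single element $h([A])\in\C\O_A^r$, and by the self-injectivity of $\C U$ (Proposition \ref{3.1}) every such $h$ is left multiplication by some element of $\C U$. First I would set up the evaluation map $\varepsilon\colon \C Y^r \to \C\O_A^r$, $x\mapsto x[A]$. This is visibly $\C$-linear, and its image lies in $\C\O_A^r$ by the very definition of $Y^r$ in (\ref{4.1}); moreover, since $Y^r$ is a subgroup of $U$ and for $u\in Y^r$ the element $u[A]$ is a scalar multiple of a basis idempotent of $\C\O_A^r$, the image $\varepsilon(\C Y^r)$ is a $\C$-submodule of $\C\O_A^r$ stable under right multiplication by $\C U$ — but it contains $[A]$, which generates $\C\O_A^r$ as a right $\C U$-module, so $\varepsilon$ is onto $\C\O_A^r$. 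Hence $\C\O_A^r \cong \C Y^r/\mathfrak a_r$ as $\C$-spaces (indeed as right $\C U$-modules, once one checks $\varepsilon$ is a map of right $\C U$-modules, which is immediate from associativity of multiplication in $\C U$).

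The key step is to upgrade this to a ring statement. I would define a map $\Psi\colon \C Y^r \to E^r = \End_{\C U}(\C\O_A^r)$ by sending $x\in\C Y^r$ to ``left multiplication by $x$'', which makes sense precisely because $x(\C\O_A^r) = x[A]\C U \subseteq \C\O_A^r$ (using $x[A]\in\C\O_A^r$ and right $\C U$-stability of $\C\O_A^r$). This $\Psi$ is a ring homomorphism because left multiplication by $x$ then by $y$ equals left multiplication by $yx$ (one must be mildly careful about the order; taking $E^r$ acting on the left, $\Psi$ is an anti-homomorphism, or one replaces $\C Y^r$ by its opposite — but since the next section works with these rings directly, I would simply fix the convention consistently with \ref{3.1}, where elements of $f^{-1}(\O^l_A\cap\O^r_B)$ act by left multiplication). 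Surjectivity of $\Psi$ follows from \ref{3.1}: any $h\in E^r$ is left multiplication by some $z\in\C U$ with $z[A]\in\C\O_A^r$, and writing $z[A]$ in the basis $\O_A^r$ and pulling each basis idempotent back to an element of $Y^r$ (each idempotent of $\O_A^r$ is $u[A]$ up to a scalar for a suitable $u\in U$, necessarily in $Y^r$) exhibits $h$ as $\Psi$ of an element of $\C Y^r$. The kernel of $\Psi$ is $\{x\in\C Y^r : x[A]\C U = 0\} = \{x : x[A]=0\} = \mathfrak a_r$, the last equality because $x[A]=0$ is equivalent to $x[A]u = 0$ for all $u\in U$ as $u$ acts invertibly. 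That $\mathfrak a_r$ is a left ideal is clear; that it is a two-sided ideal (so the quotient is a ring) follows since $\ker\Psi$ of a ring homomorphism is automatically two-sided. Thus $E^r \cong \C Y^r/\mathfrak a_r$. The statement for $E^l$ is entirely symmetric, replacing right orbits, right multiplication, and $Y^r$ by their left-hand analogues and using the left-hand version of \ref{3.1}.

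The main obstacle is bookkeeping about sides: $\C\O_A^r$ carries a right $\C U$-action while $\C Y^r$ acts by left multiplication, so one must decide whether $E^r$ is presented as a quotient of $\C Y^r$ or of $(\C Y^r)^{\mathrm{op}}$, and keep this consistent with the conventions of \ref{3.1}. I expect this to be purely a matter of fixing notation rather than a genuine difficulty; everything else is a direct consequence of $\C U$ being self-injective and $\C\O_A^r$ being cyclic, both already available. A minor point to verify carefully is that $\C Y^r$ genuinely surjects (not merely maps into) $E^r$ — i.e. that every element $z\in\C U$ realizing an endomorphism can be chosen in $\C Y^r$ — but this is handled as above by decomposing $z[A]$ over the basis $\O_A^r$ and noting each basis vector is hit by a group element of $Y^r$.
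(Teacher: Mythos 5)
Your second paragraph is a correct and complete proof, and it is essentially the argument the paper intends: the paper offers no separate proof of Lemma \ref{4.2} beyond the preceding remark that an endomorphism of the cyclic module $\C\O_A^r=[A]\C U$ is determined by its value on the generator $[A]$, combined with Proposition \ref{3.1}. Your map $\Psi$, the surjectivity argument (rewriting $z[A]\in\C(\O_A^l\cap\O_A^r)$ in the idempotent basis and lifting each basis idempotent to a group element of $Y^r$), and the identification $\ker\Psi=\mathfrak a_r$ fill this in correctly; the side/opposite-ring issue you raise is indeed only a matter of fixing conventions consistently with \ref{3.1}.

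However, your first paragraph contains a false claim that you should delete. The image of the evaluation map $\varepsilon\colon x\mapsto x[A]$ is $\C Y^r[A]=\C(\O_A^l\cap\O_A^r)$, of dimension $q^b$ ($b$ the number of hook intersections), whereas $\dim_\C\C\O_A^r=q^a$ and in general $b<a$ (in Example \ref{3.7}, $b=1$ while $a=(i-j-1)+(r-s-1)$). Consequently the image is \emph{not} stable under right multiplication by $\C U$: for $x\in\C Y^r$ and $u\in U$ the element $x[A]u$ lies in $\C\O_A^r$ but need not be of the form $y[A]$ with $y\in\C Y^r$, so the step ``the image is a right $\C U$-submodule containing $[A]$, hence all of $\C\O_A^r$'' fails. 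The resulting assertion ``$\C\O_A^r\cong\C Y^r/\mathfrak a_r$ as $\C$-spaces'' in fact contradicts the lemma you are proving, which yields $\dim_\C(\C Y^r/\mathfrak a_r)=\dim_\C E^r=q^b$. (The parenthetical claim that $\varepsilon$ is a map of right $\C U$-modules is likewise unfounded, since $\C Y^r$ carries no right $\C U$-action.) Since nothing in your second paragraph uses any of this, excising the first paragraph, or correcting ``onto $\C\O_A^r$'' to ``onto $\C(\O_A^l\cap\O_A^r)$'', leaves a complete and correct proof.
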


Note that the indices ``$r$'' and ``$l$'' indicate here which action is centralized, even if the algebras $E^r$ and $E^l$ act from the left and the right respectively.

In \ref{3.1} we exhibited a $\C$-basis of $E^r, E^l$ to be labeled by $\O_A^l\cap \O_A^r$. More precisely, for any $[B]\in \O_A^l\cap \O_A^r$ we may choose $u_{_B}\in U$ such that $u_{_B}[A]\in \C [B]$. Then $\{u_{_B}\,|\,[B]\in \O_A^l\cap \O_A^r\}\subseteq Y^r$ is modulo $\frak a_r$ a basis of $E^r$.

\begin{Lemma}\label{4.3}
$[B]\in \O_A^l\cap \O_A^r$ if and only if $B$ coincides with $A$ at all positions except the hook intersections.
\end{Lemma}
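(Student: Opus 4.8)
The plan is to prove both inclusions by the explicit description of left and right orbits furnished by Theorem \ref{3.8}(1). First I would recall the setup: $[A]$ is a verge with $\main[A]=\p=\{(i_1,j_1),\ldots,(i_k,j_k)\}$, the hook $h_\nu=h_{i_\nu j_\nu}$ has arm $h_\nu^a$ and leg $h_\nu^\ell$, and the hook intersections are the positions $(i_\mu,j_\nu)$ lying in both $h_\mu^a$ and $h_\nu^\ell$ (equivalently $j_\nu<j_\mu<i_\mu<i_\nu$, so that the arm of the lower hook crosses the leg of the upper one). By Theorem \ref{3.8}(1), $\O_A^r$ is exactly the set of $[C]$ obtained from $[A]$ by filling the arm positions $\bigcup_\nu h_\nu^a$ with arbitrary elements of $\F_q$ (all other entries staying zero off $\p$ and equal to $A$ on $\p$), and dually $\O_A^l$ is the set of $[D]$ obtained by filling the leg positions $\bigcup_\nu h_\nu^\ell$ arbitrarily.

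For the forward inclusion, suppose $[B]\in\O_A^l\cap\O_A^r$. From $[B]\in\O_A^r$ we get $B_{st}=A_{st}$ at every position not on some arm, and from $[B]\in\O_A^l$ we get $B_{st}=A_{st}$ at every position not on some leg. Hence $B$ and $A$ can differ only at positions lying simultaneously on an arm and on a leg, i.e. at hook intersections; at all other positions they agree. This is one direction. For the converse, suppose $B$ agrees with $A$ off the hook intersections. Then in particular $B$ agrees with $A$ outside $\bigcup_\nu h_\nu^a$ (since hook intersections lie on arms), so $B$ is one of the matrices described in Theorem \ref{3.8}(1) for the right orbit, giving $[B]\in\O_A^r$; symmetrically, hook intersections lie on legs, so $B$ agrees with $A$ outside $\bigcup_\nu h_\nu^\ell$ and $[B]\in\O_A^l$. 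Thus $[B]\in\O_A^l\cap\O_A^r$.

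The only genuinely delicate point — and the step I would write out carefully — is that a hook intersection is precisely a position that lies on \emph{some} arm \emph{and} on \emph{some} leg, so that ``off the arms'' $\cap$ ``off the legs'' equals ``off the hook intersections''. A position $(i_\mu,j_\nu)$ on arm $h_\mu^a$ satisfies $i=i_\mu$, $j_\nu<j=j_\nu<i_\mu$; for it also to lie on a leg $h_{\lambda}^\ell$ we need a main condition in column $j_\nu$, i.e.\ $\lambda=\nu$, and $j_\nu<i_\mu<i_\nu$. Conversely any position on both an arm and a leg has its row equal to the row of some main condition and its column equal to the column of another, hence is a hook intersection in the sense of Example \ref{3.7}. (One should note the mild subtlety that the main conditions occupy distinct rows \emph{and} distinct columns, by the verge property, so these bookkeeping identifications of indices are unambiguous.) Once this combinatorial identification is in place, the two inclusions above are immediate from Theorem \ref{3.8}(1), and no further computation is needed.
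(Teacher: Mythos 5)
Your argument is correct and is essentially the paper's own proof: both deduce the lemma directly from Theorem \ref{3.8}(1) by observing that a position lies on some hook arm and simultaneously on some hook leg precisely when it is a hook intersection. (Only a small index slip: the inequality chain in your parenthetical should read $j_\mu<j_\nu<i_\mu<i_\nu$, matching Example \ref{3.7}, not $j_\nu<j_\mu<i_\mu<i_\nu$.)
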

\begin{proof}
By \ref{3.8} $\O_A^l$ consists of all idempotents $[B]\in \E$ which differ from $[A]$ only in positions on the hook legs $h_{ij}^\ell$ with $(i,j)\in \p$ and $\O_A^r$ consists of idempotents $[C]\in \E$ differing from $[A]$ only on the hook arms $h_{ij}^a$ with $(i,j)\in \p$. This implies the lemma immediately.
\end{proof}

Let $(i,j), (r,s)\in \p$, $1\leqslant j<s <i<r \leqslant n$. Thus $h_{ij}, h_{rs}$ intersect precisely in position $(i,s)$ (compare \ref{3.7}). Moreover acting by $X_{ri}$ from the left on $[A]$ produces all idempotents $[B]\in \O_A^l\cap \O_A^r$, which differ from $[A]$ only in position $(i,s)$, the hook intersection. Note that $(r,i)\in h_{rs}^a$. For dealing with more than one  hook intersection and keeping track of all the positions involved, we inspect the following example:

\begin{Example}\label{4.4}
Let $1\leqslant j<s<b<i<r<a\leqslant n$ with $\{(i,j), (r,s), (a,b)\}\subseteq \p_{_A}$.
\begin{center}
\begin{picture}(130,200)
\put(-40,0){\line(1,0){200}}
\put(-40,0){\line(0,1){200}}
\put(-10,80){\line(1,0){90}}
\put(-10,80){\line(0,1){90}}
\put(-40,200){\line(1,-1){200}}
\put(20,50){\line(1,0){90}}
\put(20,50){\line(0,1){90}}
\put(50,20){\line(0,1){90}}
\put(50,20){\line(1,0){90}}
\put(10,39){\footnotesize $(r,s)$}
\put(52,39){\footnotesize $(r,b)$}
\put(82,39){\footnotesize $(r,i)$}
\put(38,10){\footnotesize $(a,b)$}
\put(20,140){\circle*{3}}
\put(20,50){\circle*{3}}
\put(-10,80){\circle*{3}}
\put(50,20){\circle*{3}}
\put(22,145){$s$}
\put(-10,170){\circle*{3}}
\put(-8,175){$j$}
\put(50,110){\circle*{3}}
\put(52,113){$b$}
\put(80,80){\circle*{3}}
\put(82,83){$i$}
\put(110,50){\circle*{3}}
\put(115,51){$r$}
\put(140,20){\circle*{3}}
\put(145,21){$a$}
\multiput(110,50)(0,-6.4){5}{\line(0,-1){4}}
\multiput(80,80)(0,-7){9}{\line(0,-1){4}}
\put(100,10){\footnotesize $(a,r)$}
\put(70,10){\footnotesize $(a,i)$}
\put(106.1,17.2){$\times$}
\put(76.1,17.2){$\times$}
\put(76.1,47.2){$\times$}
\put(-18,69){\footnotesize $(i,j)$}
\put(22,69){\footnotesize $(i,s)$}
\put(52,69){\footnotesize $(i,b)$}
\multiput(50,110)(-6.2,0){10}{\line(-1,0){4}}
\multiput(20,140)(-6.2,0){5}{\line(-1,0){4}}
\put(22,99){\footnotesize $(b,s)$}
\put(-33,107){\footnotesize $(b,j)$}
\put(-33,137){\footnotesize $(s,j)$}
\put(16,107){$\times$}
\put(-14,107){$\times$}
\put(-14,137){$\times$}
 \end{picture}
\end{center}
We list the positions of hook intersections and the root subgroups changing the values there by left and right action in the following table:
\begin{center}
\begin{tabular}{|c|c|c|c|}\hline
\begin{tabular}{c}hook intersections\\ to be changed \end{tabular} & \begin{tabular}{c}$h_{ij}\cap h_{rs}$\\ $=(i,s)$ \end{tabular} &  \begin{tabular}{c} $h_{ij}\cap h_{ab}$\\ $=(i,b)$ \end{tabular} & \begin{tabular}{c}$h_{rs}\cap h_{ab}$ \\ $=(r,b)$\end{tabular}\\ \hline
\begin{tabular}{c}by left action =\\ truncated row operations\end{tabular}& $X_{ri}$ & $X_{ai}$ & $X_{ar}$\\ \hline
\begin{tabular}{c}by right action =\\ truncated column operations\end{tabular}& $X_{sj}$ & $X_{bj}$ & $X_{bs}$\\\hline
\end{tabular}
\end{center}
\end{Example}

So let $[B]\in \O_A^l\cap \O_A^r$ with $B$ differing from $A$ only in these three hook intersections we may act first by $X_{ri}$ to obtain $B_{is}$ at position $(i,s)$. Then left action by $X_{ai}$  and  $X_{ar}$ will change only position $(i,b)$ and $(r,b)$, thus we can insert $B_{ib}$ at position $(i,b)$ and $B_{rb}$ at position $(r,b)$ without changing the entry at position $(i,s)$. Similarly we use $X_{bs}$ on the right first, then $X_{sj}$ and $X_{bj}$ acting from the right on $[A]$ to obtain $[B]$.

It is easy to check $\cR \cup \{(s,j),(b,j),(b,s)\}$ and $\cL \cup \{(r,i),(a,i),(a,r)\}$ are closed subsets of $\Phi^+$.

\bigskip

In general, we define $\hat \cL$ to be $\cL$ combined with all positions $(r,i)\in \Phi^+$ such that there exist $1\leqslant j, s \leqslant n$ such that $(i,j), (r,s) \in \p_{_A}$ and $1\leqslant j<s<i<r\leqslant  n$. Then $\hat \cL$ consists of $\cL$ and all positions on hook arms, such that the corresponding root subgroups change in $[A]$ only the values at a hook intersection acting from the left. Thus obviously $|\hat \cL|-|\cL|=|\hat \cL \setminus \cL|=b=$number of hook intersections. We can now prove the following:

\begin{Theorem}\label{4.5}
$\hat \cL$ is a closed subset of $\Phi^+$ such that $U_{\hat \cL }=\{u\in U\,|\,u[A]\in \c \O_A^r\}=Y^R$. Moreover $U_{{\cL}^\circ}$ and $U_{\cal L}$ are normal subgroups of $U_{\hat \cL }$ and $U_{\cal L}/U_{{\cal L}^\circ }=X_{i_1 j_1}\times \cdots X_{i_k j_k}$ is contained in the center of    $U_{\hat \cL}/U_{{\cal L}^\circ }$ and acts on the verge idempotent $[A]$ from the left by the linear character $\theta_A$ defined in \ref{3.6}. If $\epsilon_{_A}\in \C U_{\cL}$ denotes the idempotent affording the linear character $\theta_A$ with trivial $U_{\cL^\circ}$-action,  the endomorphism ring $E^r=\End_{\C U} (\C \O_A^r)$ is isomorphic to  the right ideal  $\epsilon_{_A} \C U_{\hat \cL}$ of $\C U_{\hat \cL}$.
\end{Theorem}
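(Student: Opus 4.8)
The plan is to establish the four assertions in the order in which they depend on one another: first the closedness of $\hat\cL$, then the identification $U_{\hat\cL}=Y^r$, then the normality and centrality statements, and finally the isomorphism $E^r\cong\epsilon_{_A}\C U_{\hat\cL}$. For closedness I would argue directly from the definition: $\hat\cL=\cL\cup\{(r,i):\exists\,(i,j),(r,s)\in\p_{_A},\ j<s<i<r\}$. Suppose $(x,y),(y,z)\in\hat\cL$ with $(x,z)\in\Phi^+$; one checks the finitely many cases according to which of the two positions lies in $\cL$ and which is an ``extra'' hook-arm position, using that $\cL$ is closed (\ref{3.6}), that the rows $i_1,\dots,i_k$ of main conditions are distinct, and that $(r,i)\in\hat\cL\setminus\cL$ forces row $i$ to carry a main condition $(i,j)$ while $r$ carries a main condition $(r,s)$ with $s>j$. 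This is the kind of bookkeeping already illustrated in Example \ref{4.4} (where one verifies $\cL\cup\{(r,i),(a,i),(a,r)\}$ is closed), so I would state it and refer to that pattern rather than grinding every subcase.

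Next, $U_{\hat\cL}=Y^r$. The inclusion $U_{\hat\cL}\subseteq Y^r$ follows from the analysis in \ref{3.7}--\ref{4.4}: root subgroups $X_{ij}$ with $(i,j)\in\cL$ act on $[A]$ from the left either trivially or by filling hook legs (so stay inside $\C\O_A^l$, hence certainly fix $[A]$ up to scalar after we restrict attention to the right orbit — more precisely they lie in $\Pstab_U^l[A]$), while the extra generators $X_{ri}$ with $(r,i)\in\hat\cL\setminus\cL$ change only the single hook-intersection entry $(i,s)$ and by \ref{4.3} keep $[A]$ inside $\O_A^l\cap\O_A^r\subseteq\O_A^r$; since $U_{\hat\cL}$ is generated by these root subgroups in any fixed order and the $*$-action is monomial on $\E$, every $u\in U_{\hat\cL}$ sends $[A]$ to a scalar times an element of $\O_A^r$, i.e. into $\C\O_A^r$. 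For the reverse inclusion $Y^r\subseteq U_{\hat\cL}$ I would use \ref{3.1}: a $\C$-basis of $E^r=\End_{\C U}\C\O_A^r$ is indexed by $\O_A^l\cap\O_A^r$, and by \ref{4.3} these idempotents differ from $[A]$ only at the $b$ hook intersections; the explicit choice of $u_{_B}$ made in \ref{4.4} (compose the relevant $X_{ri}$'s in the right order) shows each such $u_{_B}$ lies in $U_{\hat\cL}$, and since $Y^r$ is generated modulo $\Pstab_U[A]=U_\cR$ — no, more carefully: $Y^r/\!\sim$ has the $u_{_B}$ as representatives and $\ker$ consists of $u$ with $u[A]\in\C[A]$, i.e. $u\in\Pstab_U[A]=U_\cR$; so $Y^r=U_\cR\cdot\{u_{_B}\}$. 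One then checks $U_\cR\subseteq U_{\hat\cL}$ (every position of $\cR$ is either in a zero column of $[A]$ — hence the corresponding root subgroup acts trivially from the left and lies in $\cL\subseteq\hat\cL$ — or strictly below a main condition, again in $\cL$), giving $Y^r\subseteq U_{\hat\cL}$.

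For normality and centrality: $U_{\cL^\circ}\trianglelefteq U_\cL$ and $U_{\cL^\circ}$ acts trivially on $[A]$ from the left by \ref{3.6}; to upgrade to $U_{\cL^\circ},U_\cL\trianglelefteq U_{\hat\cL}$ I would use that $\cL^\circ$ and $\cL$ are each closed and are ``ideals'' in $\hat\cL$ in the sense $(x,y)\in\cL^\circ,(y,z)$ or $(z,x)\in\hat\cL\Rightarrow(x,z)\in\cL^\circ$ (and similarly for $\cL$) — the commutator formula $[x_{ij}(\alpha),x_{kl}(\beta)]$ in $U$ then forces $U_{\hat\cL}$ to normalize both. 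That $U_\cL/U_{\cL^\circ}\cong X_{i_1j_1}\times\cdots\times X_{i_kj_k}$ acting by $\theta_A$ is exactly \ref{3.6}. Centrality in $U_{\hat\cL}/U_{\cL^\circ}$ is the point where the structure of $\hat\cL$ matters: a generator $x_{ri}(\gamma)$ with $(r,i)\in\hat\cL\setminus\cL$ commutes with each $x_{i_\nu j_\nu}(\alpha)$ modulo $U_{\cL^\circ}$ because the only possibly nonzero commutator lies in a root subgroup $X_{mn}$ with $(m,n)$ a hook-leg or below-main-condition position, i.e. in $\cL^\circ$ — again a commutator-formula check. Finally, for the isomorphism $E^r\cong\epsilon_{_A}\C U_{\hat\cL}$: by \ref{4.2}, $E^r=\C Y^r/\mathfrak a_r=\C U_{\hat\cL}/\mathfrak a_r$ where $\mathfrak a_r=\{x\in\C U_{\hat\cL}:x[A]=0\}$ is a left annihilator, hence $E^r\cong(\C U_{\hat\cL})[A]$ as a left module and, choosing the idempotent $\epsilon_{_A}\in\C U_\cL$ affording $\theta_A$ with trivial $U_{\cL^\circ}$-action, one has $\C U_{\hat\cL}\epsilon_{_A}\cong\C U_{\hat\cL}[A]$ — but since $E^r$ acts on $\C\O_A^r$ from the left and is itself an algebra, the natural object is the \emph{right} ideal $\epsilon_{_A}\C U_{\hat\cL}$; the anti-isomorphism $\C U_{\hat\cL}\to\C U_{\hat\cL}$, $u\mapsto u^{-1}$, or equivalently passing from the annihilator description to $\epsilon_{_A}\C U_{\hat\cL}\cong\End_{\C U_{\hat\cL}}(\epsilon_{_A}\C U_{\hat\cL})^{\mathrm{op}}$-style bookkeeping, converts the left-module statement into the asserted right-ideal statement. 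Concretely: map $\epsilon_{_A}u\mapsto(\text{the endomorphism }[A]\mapsto u^{-1}\!*[A])$ for $u\in U_{\hat\cL}$, check it is well-defined on $\epsilon_{_A}\C U_{\hat\cL}$ (the relations defining $\epsilon_{_A}$ match $\mathfrak a_r$ by the trivial/monomial action of $U_{\cL^\circ}$ and $U_\cL$ on $[A]$), $\C$-linear, multiplicative for the right-ideal product, and bijective by dimension count $|\O_A^l\cap\O_A^r|=q^b=\dim\epsilon_{_A}\C U_{\hat\cL}$ (since $\epsilon_{_A}$ has $U_\cL$-rank $1$ and $[\hat\cL:\cL]$-codimension $b$).

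\textbf{Main obstacle.} The genuinely delicate part is matching the annihilator ideal $\mathfrak a_r$ with the idempotent cut $\epsilon_{_A}$ and getting the \emph{sides} right — i.e. showing $E^r$, which by construction acts on $\C\O_A^r$ by left multiplication, is isomorphic as an \emph{algebra} to the \emph{right} ideal $\epsilon_{_A}\C U_{\hat\cL}$ (not the left ideal $\C U_{\hat\cL}\epsilon_{_A}$), with the multiplication correctly oriented. I expect this to require a careful application of the self-injectivity argument of \ref{3.1} together with the explicit monomial formula \ref{2.9} to pin down how $\epsilon_{_A}u_{_B}$ composes with $\epsilon_{_A}u_{_{B'}}$, and it is here rather than in the purely combinatorial closedness/normality checks that one must be most careful.
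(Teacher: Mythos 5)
Your skeleton coincides with the paper's: closedness of $\hat\cL$ by the case analysis modelled on Example \ref{4.4}, normality of $U_{\cL^\circ}$ and $U_\cL$ and centrality of $U_\cL/U_{\cL^\circ}$ via the commutator relations $[X_{ar},X_{ri}]=X_{ai}$ landing in $U_{\cL^\circ}$, and the identification of $E^r$ by the dimension count $\dim_\C\epsilon_{_A}\C U_{\hat\cL}=q^b=|\O_A^l\cap\O_A^r|=\dim_\C E^r$. However, two steps as written would fail. First, in your argument for $Y^r\subseteq U_{\hat\cL}$ you take the ``kernel'' $\{u\in U\,|\,u[A]\in\C[A]\}$ to be $\Pstab_U[A]=U_\cR$ and then assert $U_\cR\subseteq U_{\hat\cL}$. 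Since $u[A]$ is the \emph{left} action, that set is the left projective stabilizer $U_\cL$ (see \ref{3.4} and the discussion after \ref{3.6}), and the auxiliary claim $U_\cR\subseteq U_{\hat\cL}$ is simply false: for $n=3$ and $\p=\{(3,1)\}$ one has $(3,2)\in\cR^\circ$ but $\hat\cL=\cL=\{(2,1),(3,1)\}$. The repair is immediate — $Y^r=\{u_{_B}\}\cdot U_\cL$ with each $u_{_B}\in U_{\hat\cL}$ and $\cL\subseteq\hat\cL$ by definition — but as stated the step is wrong, not merely unproved.

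Second, the ``main obstacle'' you identify is a phantom, and your proposed resolution introduces a genuine error. Once centrality of $U_\cL/U_{\cL^\circ}$ in $U_{\hat\cL}/U_{\cL^\circ}$ is in hand, $\epsilon_{_A}$ is central in $\C U_{\hat\cL}$ (Remark \ref{4.6}), so $\epsilon_{_A}\C U_{\hat\cL}=\C U_{\hat\cL}\epsilon_{_A}$ is a two-sided ideal and a direct algebra summand; there is no left/right tension to resolve. The correct map is plain left multiplication $x\mapsto L_x$, $L_x(v)=xv$ for $v\in\C\O_A^r$, which satisfies $L_xL_y=L_{xy}$ and hence is an algebra homomorphism $\C U_{\hat\cL}\to E^r$; it kills $(1-\epsilon_{_A})\C U_{\hat\cL}$ because $\epsilon_{_A}[A]=[A]$, is surjective because the $u_{_B}$ realize the basis $f^{-1}(\O_A^l\cap\O_A^r)$ of \ref{3.1}, and is then bijective on $\epsilon_{_A}\C U_{\hat\cL}$ by the $q^b$ count. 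Your map $\epsilon_{_A}u\mapsto\bigl([A]\mapsto u^{-1}*[A]\bigr)$ is an \emph{anti}-homomorphism ($L_{u^{-1}}L_{v^{-1}}=L_{(vu)^{-1}}$) and so cannot give the claimed algebra isomorphism whenever $\epsilon_{_A}\C U_{\hat\cL}$ is noncommutative, which is exactly the interesting case (e.g.\ the hook-connected configurations of Theorem \ref{5.3}, where $E^r$ contains a copy of $\C U_3(q)\epsilon$ with no one-dimensional representations).
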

\begin{proof}
First we show that  $(a,r), (r,i)\in \hat \cL$ implies $(a,i)\in \hat \cL$ proving that $\hat \cL$ is a closed subset of $\Phi^+$. Since $\cL$ is closed, we may assume that $(a,r)$ or $(r,i)$ is not contained in $\cL$.

Suppose $(a,r)\notin \cL$ thus we find $(a,b), (r,s)\in \p$, $1\leqslant s<b<r<a\leqslant n$ such that  $h_{ab}\cap h_{rs}=\{(r,b)\}$  and $X_{ar}$ acting from the left on $[A]$ changes only the entry at the hook intersection $(r,b)$ in $[A]$.

\begin{center}
 \begin{picture}(140,160)
  \put(0,0){\line(1,0){160}}
\put(0,0){\line(0,1){160}}
\put(0,160){\line(1,-1){160}}
\put(30,50){\line(1,0){80}}
\put(30,50){\line(0,1){80}}
\put(50,55){\line(1,1){60}}
\put(70,20){\line(0,1){70}}
\put(70,20){\line(1,0){70}}
\put(17,39){$(r,s)$}
\put(72,39){$(r,b)$}
\put(60,10){$(a,b)$}
\put(100,10){$(a,r)$}
\put(30,130){\circle*{3}}
\put(32,135){$s$}
\put(70,90){\circle*{3}}
\put(72,93){$b$}
\put(110,50){\circle*{3}}
\put(115,51){$r$}
\put(140,20){\circle*{3}}
\put(145,21){$a$}
\put(115,115){$(r,i)\in \row r$}
\multiput(110,50)(0,-6.4){5}{\line(0,-1){4}}
\end{picture}
\end{center}
Now $(r,i)\in \hat \cL$ is a position in row $r$, so $i<r$.

Suppose first  $(r,i)\in \cL$. Then $(r,i)$ is not to the right of $(r,s)\in \p\subseteq \cL$, that is $i\leqslant s$. Since $r<a$ and $s<b$ we have $i<b$ and  hence the position $(a,i)$ in row $a$ is strictly to the left of $(a,b)\in \p$. We conclude that $(a,i)\in \cL^\circ \subseteq \cL\subseteq \hat \cL$. Since the commutator subgroup $[X_{ar}, X_{ri}]$ equals $X_{ai}$ this shows too that $X_{ar}$ normalizes $U_{\cL^\circ}$ and $U_{\cL}$ and that it commutes with $X_{rs}$ modulo $U_{\cL^\circ}$.

Now suppose $(r,i)\notin \cL$ as well. So $(r,i)$ is in row $r$ strictly to the right of $(r,s)\in\p$, that is $s<i$. If $i\leqslant b$, then $(a,i)$ is in row $a$ not to right of $(a,b)\in \p$, hence is contained in $\cL$ and we are done, since $\cL\subseteq \hat \cL$. Suppose $b<i$. Since $(r,i)\notin \cL$, but $(r,i)\in \hat \cL$, there must be in addition a main condition $(i,j)$ in row $i$ with $j<s$ such that $X_{ri}$ acting from the left on $[A]$ changes only the value at the hook intersection $h_{ij}\cap h_{rs}=\{(i,s)\}$ in $[A]$. We have (compare \ref{4.4}):
$1\leqslant j<s<b<i<r<a\leqslant n$ combining all inequalities above, and $(i,j),(r,s),(a,b)\in \p$. In particular $h_{ij}\cap h_{ab}=\{(i,b)\}$. Moreover $X_{ai}$ acts on $[A]$ from the left changing in $[A]$ only the value $A_{ib}=0$ at position $(i,b)$, hence $(a,i)\in \hat \cL$, as desired.

A similar argument shows that $(r,i)\notin \cL$ and $(a,r)\in \cL$, then $(a,i)\in  \cL^\circ$ proving that $\hat \cL$ is closed, and that the commutator subgroup $[U_{\hat \cL}, U_\cL]$ is contained in $U_{{\cal L}^\circ}$. This implies $U_{{\cal L}^\circ}, U_{\cal L}$ are normal subgroups of $ U_{\hat \cL}$ and $U_{\cal L}/U_{{\cal L}^\circ }$ is contained in the center  $Z(U_{\hat \cL}/U_{{\cal L}^\circ})$ of $U_{\hat \cL}/U_{{\cal L}^\circ}$.

Obviously $U_{\hat \cL}\subseteq Y^r$ by \ref{3.1}. Since $U_{\cL^\circ}$ acts trivially and $U_{\cL}/U_{\cL^\circ}$ by the linear character $\theta_A$ from the left on $[A]$, $\dim_{\C} (\epsilon_{_A} \C U_{\hat \cL})=q^b=\dim_{\C} E^r$, finishing the proof of the theorem. 
\end{proof}

Obviously there is a left sided version of \ref{4.5}. Recall that $\cR^\circ$ (for given verge $[A]\in \E$) consists of all positions $(i,j)\in \Phi^+$ in zero columns and in column $j$ such that position  $(i,j)$ is strictly above some main condition $(r,j)\in \p_{_A}$, that is $r< i\leqslant n$. $\cR$ was defined to be $\cR^\circ \cup \p$. Now $\hat \cR$ is $\cR$ together with all positions $(a,b)\in \Phi^+$ satisfying the following: $(a,b)\in h_{ij}^\ell$ for some $(i,j)\in \p_{_A}$ (so $b=j$) and $X_{ab}$ acts on $[A]$ from the right by changing the value on precisely one hook intersection. 

\begin{Theorem}\label{4.5l}
$\hat \cR\subseteq \Phi^+$ is closed, $U_\cR, U_{\cR^\circ}\trianglelefteq U_{\hat \cR}$ and $U_{\cR}/U_{\cR^\circ}$ is central in $U_{\hat \cR}/U_{\cR^\circ}$. Right multiplication on $[A]$ by elements of $\C U_\cR$ induces endomorphism of the left orbit module $\C \O_A^l$. This endomorphism ring $E^l=\End_{\C U} \C \O_A^l$ is isomorphic to the left ideal $\C U\!_{\hat \cR}\epsilon_{_A}^\perp$ of the group algebra $\C U\!_{\hat \cR}$, where $\epsilon_{_A}^\perp$ is  the idempotent of $\C U_\cR$ affording the trivial $U_\cR^\circ$-action and affording the linear character $\theta_A$ of \ref{3.6} on $U_{\cR}/U_\cR^{\circ}$.
\end{Theorem}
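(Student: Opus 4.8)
The plan is to mirror verbatim the proof of Theorem \ref{4.5}, exchanging the roles of rows and columns, of left and right action, and of $\cL$ and $\cR$. Since the whole situation is symmetric under the anti-automorphism of $U$ sending $u$ to $u^t$ (which interchanges root subgroups $X_{ij}$ and $X_{ji}$, truncated row operations and truncated column operations, left and right action on $\E$, and hook legs and hook arms), a verge $[A]$ is carried to the verge $[A^t]$ of the transposed main-condition set, and the set $\hat\cR$ for $[A]$ corresponds to $\hat\cL$ for $[A^t]$. Thus the statements "$\hat\cR$ is closed", "$U_\cR,U_{\cR^\circ}\trianglelefteq U_{\hat\cR}$", and "$U_\cR/U_{\cR^\circ}$ is central in $U_{\hat\cR}/U_{\cR^\circ}$" are literally Theorem \ref{4.5} applied to $[A^t]$. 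I would either invoke this transpose symmetry explicitly, or, to be safe, rerun the combinatorial case analysis: show that $(b,s),(s,a)\in\hat\cR$ forces $(b,a)\in\hat\cR$ by splitting into the cases according to whether these positions already lie in $\cR$, using the description of $\cR^\circ$ from \ref{3.6} (zero columns plus positions strictly below main conditions) together with the hook-intersection bookkeeping of Example \ref{4.4} (read in its right-action column), exactly as in the proof of \ref{4.5}.

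Next I would establish the endomorphism-ring identification. By Lemma \ref{4.2} (its "$l$" version) we have $E^l=\C Y^l/\mathfrak a_l$, and by \ref{3.1} a $\C$-basis of $E^l$ is indexed by $\O_A^l\cap\O_A^r$, which by Lemma \ref{4.3} is the set of $[B]\in\E$ agreeing with $A$ off the hook intersections; hence $\dim_\C E^l=q^b$. The transpose of the argument in \ref{4.5} shows $U_{\hat\cR}\subseteq Y^l$ and that each of the $b$ positions $(a,b)\in\hat\cR\setminus\cR$ carries, via right action of $X_{ab}$ on $[A]$, an independent entry into the corresponding hook intersection, while $U_{\cR^\circ}$ acts trivially from the right on $[A]$ and $U_\cR/U_{\cR^\circ}=X_{i_1j_1}\times\cdots\times X_{i_kj_k}$ acts by $\theta_A$ (this is the left-hand part of \ref{3.6}). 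Therefore right multiplication by elements of $\C U_{\hat\cR}$ on $[A]$ defines a map $\C U_{\hat\cR}\to E^l$; since $U_{\cR^\circ}$ acts trivially and $U_\cR/U_{\cR^\circ}$ by $\theta_A$, this map kills $\C U_{\hat\cR}(1-\epsilon_{_A}^\perp)$, so it factors through $\C U_{\hat\cR}\epsilon_{_A}^\perp$ (here I must be careful with sides: because $E^l$ acts on $\C\O_A^l$ from the right and $\epsilon_{_A}^\perp$ is central in $\C U_\cR$, which is itself normal, the relevant one-sided ideal of $\C U_{\hat\cR}$ is the left ideal $\C U_{\hat\cR}\epsilon_{_A}^\perp$, matching the statement). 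Finally $\dim_\C(\C U_{\hat\cR}\epsilon_{_A}^\perp)=[U_{\hat\cR}:U_\cR]\cdot\dim_\C(\C U_\cR\epsilon_{_A}^\perp)=q^b\cdot 1=q^b=\dim_\C E^l$, and a dimension count forces the surjection to be an isomorphism.

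The main obstacle, as in \ref{4.5}, is the combinatorial verification that $\hat\cR$ is closed: one must chase the inequalities among the row and column indices of the at-most-three main conditions involved in any potential "triangle" of hook-arm/hook-leg positions and confirm that the third side again lands either in $\cR^\circ$ or in a genuine hook leg whose root subgroup changes exactly one hook intersection. Everything else is either a transparent dimension count or a direct transcription of earlier results (\ref{3.6}, \ref{3.8}, \ref{4.2}, \ref{4.3}, \ref{3.1}) into their left-handed/transposed forms. I would therefore present the combinatorial step in full (or cleanly reduce it to \ref{4.5} via $u\mapsto u^t$) and treat the endomorphism-ring part by the dimension argument above.
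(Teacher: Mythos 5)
Your proposal is correct and matches the paper's intent exactly: the paper gives no separate proof of Theorem \ref{4.5l}, stating only that it is the left-sided mirror of Theorem \ref{4.5}, and your plan (rerun the case analysis of \ref{4.5} with rows/columns and left/right actions interchanged, then identify $E^l$ with $\C U_{\hat\cR}\epsilon_{_A}^\perp$ by the surjection-plus-dimension-count $q^b=\dim_\C E^l$ coming from \ref{3.1} and \ref{4.3}) is precisely that mirror argument. The only cosmetic caveat is that the transpose symmetry must be taken as the flip along the anti-diagonal $(i,j)\mapsto(n+1-j,n+1-i)$ (plain transposition leaves $V$), and that the action of $U_{\cR}/U_{\cR^\circ}$ by $\theta_A$ from the right is Theorem \ref{3.6} itself rather than its left-hand variant.
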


\begin{Remark}\label{4.6}
Since $\epsilon_{_A}$ and $\epsilon_{_A}^\perp$ are central idempotents in $\C U_{\hat{\cL}}$ and $\C U_{\hat{\cR}}$ respectively we see that 
$\epsilon_{_A} \C U_{\hat \cL} =\C U_{\hat \cL} \epsilon_{_A}$ and $\epsilon_{_A}^\perp \C U_{\hat \cR} =\C U_{\hat \cR} \epsilon_{_A}^\perp$ are ideals of $\C U_{\hat \cL}$ and $\C U_{\hat \cR}$ respectively.
\end{Remark}

The next lemma shows, how we can shift the left action on $[A]$ of $U_{\hat \cL}$ to the right to obtain a right action by $U_{\hat \cR}$. Let $(i,j), (r,s)\in \p$ with $1\leqslant j<s<i<r\leqslant n$ as in example \ref{3.7}. Thus $h_{ij}\cap h_{rs}=\{(i,s)\}$ and $X_{ri}$ acting from the left and $X_{sj}$ acting from the right on $[A]$ will change only the value at the hook intersection $(i,s)$ in $[A]$. Note that $(r,i)\in \hat \cL\setminus \cL$ and $(s,j)\in \hat \cR\setminus \cR$. In this situation we have:

\begin{Lemma}\label{4.7}
Let $A_{rs}=\tau$, $A_{ij}=\rho$, so $0\not=\rho, \tau\in \F_q$. Set $\sigma=\tau/\rho\in \F_q$. Then
\[
x_{ri}(\alpha)[A]=[A]x_{sj}(\alpha \sigma) \text{ for all } \alpha\in \F_q.
\]
\end{Lemma}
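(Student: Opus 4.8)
The plan is to verify the claimed identity $x_{ri}(\alpha)[A]=[A]x_{sj}(\alpha\sigma)$ by computing both sides explicitly using the monomial action formulas already established, namely Corollary \ref{2.9} and its left-handed analogue, together with the description of truncated row/column operations on the verge idempotent $[A]$. Since $[A]$ is a verge with main conditions $\p=\p_{_A}$, the only nonzero entries of $A$ are the $A_{i_\nu j_\nu}$, and in the situation at hand the relevant ones are $A_{rs}=\tau$ and $A_{ij}=\rho$. So both computations involve only these two entries and the positions in rows $r,i$ and columns $s,j$.

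First I would compute the left-hand side. Left action by $x_{ri}(\alpha)$ is a truncated row operation: it adds $-\alpha$ times row $r$ to row $i$ and then truncates entries on or above the diagonal back to zero. The only nonzero entry in row $r$ of $A$ is $A_{rs}=\tau$ in column $s$, and since $s<i$ the position $(i,s)$ lies strictly below the diagonal, so after the operation the entry at $(i,s)$ becomes $A_{is}-\alpha\tau=-\alpha\tau$ (as $(i,s)$ is a hook intersection, $A_{is}=0$); all other entries are unchanged. By the left-handed analogue of \ref{2.9}, the scalar picked up is $\theta(\alpha A_{ri})$, and since row $r$ of $A$ has its unique nonzero entry at column $s\neq i$ we get $A_{ri}=0$, so the scalar is $1$. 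Hence $x_{ri}(\alpha)[A]=[B]$ where $B$ agrees with $A$ except $B_{is}=-\alpha\tau$.

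Next I would compute the right-hand side. Right action by $x_{sj}(\beta)$ (with $\beta=\alpha\sigma$) is a truncated column operation: it adds $-\beta$ times column $j$ to column $s$ (since $x_{sj}(\beta)^{-t}=x_{js}(-\beta)$, cf.\ the discussion before \ref{2.9}) and truncates. The only nonzero entry in column $j$ of $A$ is $A_{ij}=\rho$ in row $i$, and since $i<r$... wait---the affected column is $s$, so the entry at position $(i,s)$ becomes $A_{is}-\beta A_{ij}=-\beta\rho=-\alpha\sigma\rho=-\alpha\tau$, using $\sigma=\tau/\rho$; all other entries unchanged. The scalar from \ref{2.9} is $\theta(\beta A_{sj})$, and column $j$ of $A$ has its unique nonzero entry at row $i\neq s$, so $A_{sj}=0$ and the scalar is $1$. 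Hence $[A]x_{sj}(\alpha\sigma)=[B']$ where $B'$ agrees with $A$ except $B'_{is}=-\alpha\tau$. Comparing, $B=B'$, so $[B]=[B']$ and the identity follows.

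The main point to be careful about---the only genuine obstacle---is making sure no extra scalar factors appear: one must check that $A_{ri}=0$ and $A_{sj}=0$, which is exactly where the verge hypothesis (at most one nonzero entry per row and per column) is used, since $(r,s)\in\p$ forces row $r$ and column $s$ to have no other nonzero entries, and likewise $(i,j)\in\p$ controls row $i$ and column $j$. One should also note that $(r,i)$ and $(s,j)$ are not themselves positions of $\cL$ or $\cR$ (they lie on hook arms), consistent with \ref{4.4}, so the single-hook-intersection analysis of \ref{3.7} applies verbatim. Everything else is a direct bookkeeping check of the two truncated operations.
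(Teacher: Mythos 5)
Your proof is correct and follows essentially the same route as the paper's: both sides are evaluated via the truncated row/column operation descriptions and Corollary \ref{2.9} (and its left-handed analogue), the scalars are seen to be trivial because $A_{ri}=A_{sj}=0$, and the two resulting matrices agree at the single changed position $(i,s)$ with value $-\alpha\tau$. No gaps.
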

\begin{proof}
We calculate, using the left hand sided version of \ref{2.9}:
\[
x_{ri}(\alpha)[A]=\theta(\alpha A_{ri})[B]=[B]\in \O_A^l \cap \O_A^r,
\]
using $A_{ri}=0$, where $B$ differs from $A$ only at position $(i,s)$ which is $B_{is}=-\alpha \tau\in \F_q$. Again by \ref{2.9}:
\[
[A]x_{sj}(\alpha\sigma)=\theta(\alpha\sigma A_{sj})[C]=[C], \text{ since } A_{sj}=0,
\]
where $C$ differs from $A$ only at position $(i,s)$ with $C_{is}=-\alpha \sigma\rho=-\alpha \cdot \frac{\tau}{\rho}\cdot \rho=-\alpha\tau=B_{is}$. So $B=C$ proving the lemma.
\end{proof}

For $(a,b)\in \Phi^+, \beta\in \F_q$, we define the linear character $\theta_{ab}^\beta: X_{ab}\rightarrow \C^*$ by $\theta_{ab}^\beta (x_{ab}(\alpha))=\theta(\alpha \beta)$. Observe that $\{\theta_{ab}^\beta\,|\, \beta\in \F_q\}$ is the complete set of distinct linear characters of the root subgroup $X_{ab}\leqslant U$. For $\beta\in \F_q$ we define then $f_{ab}^\beta=\frac{1}{q}\sum_{\alpha\in \F_q} \theta_{ab}^\beta (x_{ab}(-\alpha)) x_{ab}(\alpha)\in \C X_{ab}$. Thus $f_{ab}^\beta$ is the primitive idempotent of $\C X_{ab}$ affording $\theta_{ab}^\beta$. Keeping this notation and the notation in \ref{4.7} we have:
\begin{Cor}\label{4.8}
Let $[A]$, $X_{ri}, X_{sj}$ as above, then
\[
f_{ri}^\beta [A]=[A] f_{sj}^{\beta \sigma^{-1}} \text{ for all } \beta \in \F_q.
\]
\end{Cor}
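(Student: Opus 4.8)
The plan is to deduce Corollary \ref{4.8} directly from Lemma \ref{4.7} by averaging the identity $x_{ri}(\alpha)[A]=[A]x_{sj}(\alpha\sigma)$ over $\alpha\in\F_q$ against appropriate character values. First I would recall that $f_{ri}^\beta=\frac1q\sum_{\alpha\in\F_q}\theta_{ri}^\beta(x_{ri}(-\alpha))x_{ri}(\alpha)=\frac1q\sum_{\alpha\in\F_q}\theta(-\alpha\beta)x_{ri}(\alpha)$, so that $f_{ri}^\beta[A]=\frac1q\sum_{\alpha\in\F_q}\theta(-\alpha\beta)\,x_{ri}(\alpha)[A]$. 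Substituting Lemma \ref{4.7} turns this into $\frac1q\sum_{\alpha\in\F_q}\theta(-\alpha\beta)\,[A]x_{sj}(\alpha\sigma)$.

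The key manoeuvre is then a change of summation variable. Since $\sigma=\tau/\rho\neq0$ (both $\rho,\tau$ are nonzero field elements), the map $\alpha\mapsto\gamma:=\alpha\sigma$ is a bijection of $\F_q$, with inverse $\alpha=\gamma\sigma^{-1}$. Rewriting the sum over $\gamma$ gives $\frac1q\sum_{\gamma\in\F_q}\theta(-\gamma\sigma^{-1}\beta)\,[A]x_{sj}(\gamma)=[A]\cdot\frac1q\sum_{\gamma\in\F_q}\theta_{sj}^{\beta\sigma^{-1}}(x_{sj}(-\gamma))x_{sj}(\gamma)=[A]f_{sj}^{\beta\sigma^{-1}}$, using that $\theta(-\gamma\sigma^{-1}\beta)=\theta(-\gamma(\beta\sigma^{-1}))=\theta_{sj}^{\beta\sigma^{-1}}(x_{sj}(-\gamma))$ by the definition of $\theta_{sj}^{\beta\sigma^{-1}}$. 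This yields exactly $f_{ri}^\beta[A]=[A]f_{sj}^{\beta\sigma^{-1}}$ for all $\beta\in\F_q$, as claimed.

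There is essentially no obstacle here; the only point requiring a word of care is that the reindexing is legitimate precisely because $\sigma\in\F_q^\times$, which is guaranteed by $A_{ij}=\rho\neq0$ (a main condition of the verge), and this is already built into the hypotheses carried over from Lemma \ref{4.7}. I would also note in passing that $[A]$ is an idempotent in a commutative algebra $\C V$, so there is no ordering subtlety in writing $[A]x_{sj}(\gamma)$; the whole computation takes place inside the bimodule $\C\E\cong{}_{\C U}\C U_{\C U}$ and uses only the monomial-action formula of Corollary \ref{2.9} and its left-handed analogue, both already invoked in the proof of Lemma \ref{4.7}. Hence the corollary is a one-line consequence of Lemma \ref{4.7} together with the explicit formula for the primitive idempotents $f_{ab}^\beta$.
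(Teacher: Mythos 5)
Your proof is correct and is essentially the paper's own argument: expand $f_{ri}^\beta$ by its definition, substitute the identity of Lemma \ref{4.7}, and reindex the sum via $\gamma=\alpha\sigma$ (legitimate since $\sigma\in\F_q^*$) to recognize $[A]f_{sj}^{\beta\sigma^{-1}}$. No issues.
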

\begin{proof}
For $\beta\in \F_q$ we calculate:
\begin{eqnarray*}
f_{ri}^\beta [A]&=& \frac{1}{q}\sum_{\alpha\in \F_q} \theta_{ri}^\beta(x_{ri}(-\alpha)) x_{ri}(\alpha) [A]\\
&=&  \frac{1}{q}\sum_{\alpha\in \F_q} \theta(-\beta\alpha)[A]x_{sj}(\alpha \sigma)\\
&=& [A] \cdot    \frac{1}{q}\sum_{\alpha\in \F_q}  \theta(-\beta \sigma^{-1}\alpha)x_{sj}(\alpha)\\
&=& [A]f_{sj}^{\beta \sigma^{-1}}, \text{ as desired.}
\end{eqnarray*}
\end{proof}

Recall from \ref{4.5l} that $\Pstab_U^r[A]=U_\cR$, where $\cR$ consists of all positions in zero column and those positions in column containing a main condition, which are on or below the main condition. In the setting of \ref{4.7} let $J=\cR\cup \{(s,j)\}$.

\begin{Lemma}\label{4.9}
$J=\cR\cup \{(s,j)\}$ is a closed subset of $\Phi^+$.
\end{Lemma}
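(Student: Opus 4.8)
The plan is a short combinatorial case analysis built on the explicit description of $\cR$ recalled in \ref{3.6}: $\cR$ consists of all positions of $\Phi^+$ lying in a column that contains no main condition of $[A]$, together with all positions lying on or strictly below a main condition inside its own column. I would first record two preliminary facts. First, $\cR$ is itself a closed subset of $\Phi^+$, being the pattern set of the pattern subgroup $U_\cR=\Pstab_U^r[A]$. Second, since $j<s<i$ and $(i,j)\in\p_{_A}$, the position $(s,j)$ sits strictly above the main condition $(i,j)$ in column $j$, so $(s,j)\notin\cR$ and $J=\cR\cup\{(s,j)\}$ is a genuine one-element enlargement of $\cR$.

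To verify closedness I would take positions $(a,b),(b,c)\in J$ with $(a,c)\in\Phi^+$ and show $(a,c)\in J$. If $(a,b),(b,c)\in\cR$ this is immediate from closedness of $\cR$. Otherwise exactly one of $(a,b),(b,c)$ equals $(s,j)$ — both cannot, since $(a,b)=(s,j)$ forces $b=j$ while $(b,c)=(s,j)$ forces $b=s$, and $j<s$. So there remain two cases, each handled using only the description of $\cR$.

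\textbf{Case $(a,b)=(s,j)$.} Then $a=s$, $b=j$ and $(j,c)\in\cR$, and $(s,c)\in\Phi^+$ because $c<j<s$. If column $c$ contains no main condition, then $(s,c)\in\cR$ automatically. If column $c$ carries a main condition $(i_\nu,c)\in\p_{_A}$, then $(j,c)\in\cR$ forces $i_\nu\leqslant j$, hence $i_\nu\leqslant j<s$ and again $(s,c)\in\cR$. \textbf{Case $(b,c)=(s,j)$.} Then $b=s$, $c=j$ and $(a,s)\in\cR$; column $s$ carries the main condition $(r,s)\in\p_{_A}$, so $(a,s)\in\cR$ forces $r\leqslant a$, and together with $i<r$ this gives $i<a$, whence $(a,j)$ lies strictly below the main condition $(i,j)$ in column $j$ and so $(a,j)\in\cR$. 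In every case $(a,c)\in\cR\subseteq J$, so $J$ is closed.

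I do not anticipate a genuine obstacle here: the argument is short and purely bookkeeping. The only point requiring a little care is the sub-case of the first case in which the intermediate column $c$ itself contains a main condition, where one must keep track of the row of that main condition; but the defining description of $\cR$ disposes of it immediately. (The same bookkeeping, incidentally, underlies the closedness assertions made earlier for $\cR\cup\{(s,j),(b,j),(b,s)\}$ and for $\hat\cL$ and $\hat\cR$.)
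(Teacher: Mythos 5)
Your proof is correct and follows essentially the same route as the paper: both arguments reduce to the two nontrivial compositions involving the new root $(s,j)$, namely $(a,s)\in\cR$ composed with $(s,j)$ (using $a\geqslant r>i$ to land strictly below $(i,j)$) and $(s,j)$ composed with $(j,c)\in\cR$ (splitting on whether column $c$ carries a main condition). Your write-up is slightly more explicit about the trivial cases, but the substance is identical to the paper's proof.
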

\begin{proof}
Let $(l,s)\in \cR$. Then $(l,s)$ has to be position not above the main condition $(r,s)$ and hence $l\geqslant r$. Recall that we have $1\leqslant j<s<i<r\leqslant n$ by our setting \ref{4.7}, and hence $l>i$ and $(l,j)\in \cR^\circ \subseteq \cR\subseteq J$. Now suppose $(j,t)\in \cR$. We have to show that $(s,t)\in J$. If $t\notin \{j_1, \ldots,j_k\}, \p=\{(i_1, j_1),\ldots, (i_k,j_k)\}$ as above, column $t$ is a zero column in $[A]$ and hence $(s,t)\in \cR^\circ \subseteq \cR\subseteq J$. So let $t=j_\nu$ for some $1\leqslant \nu \leqslant k$. Then $(i_\nu, j_\nu)\in \p$ and we have $j\geqslant i_\nu$, since $(j,t)\in \cR$. Now $s>j$,thus $s>i_\nu$ and hence again $(s,t)=(s, j_\nu)\in \cR^\circ \subseteq \cR\subseteq J$.   
\end{proof}

In fact we have shown that $[X_{sj}, U_\cR]\subseteq U_{\cR^\circ}$. Recall that $\cR=\cR^\circ \cup \p$ by \ref{4.5l} and $U_{\cR^\circ}\trianglelefteq U_\cR$. Thus we have:
\begin{Cor}\label{4.10}
$U_{\cR^\circ}\trianglelefteq U_J$. Moreover $U_J/U_{\cR^\circ}\cong X_{i_1 j_1}\times \cdots \times  X_{i_k j_k} \times X_{sj}$. 
\end{Cor}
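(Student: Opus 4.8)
The plan is to obtain the corollary almost immediately from Lemma \ref{4.9} together with the structural facts already recorded for $U_\cR$ and $U_{\cR^\circ}$. Since $J=\cR\cup\{(s,j)\}$ is closed by \ref{4.9}, $U_J$ is the pattern subgroup of $U$ attached to $J$, and it is generated by $U_\cR$ together with $X_{sj}$, i.e.\ $U_J=\langle U_\cR, X_{sj}\rangle$. Because $(s,j)\notin\cR$, listing $(s,j)$ first in a linear order on $J$ shows (by the unique product decomposition of elements of a pattern subgroup recalled in the introduction) that every element of $U_J$ has a unique expression $x_{sj}(\alpha)\,v$ with $\alpha\in\F_q$ and $v\in U_\cR$; in particular $U_J=X_{sj}U_\cR$ as a set and $|U_J|=q\,|U_\cR|$.

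First I would settle normality. We already know $U_{\cR^\circ}\trianglelefteq U_\cR$ from \ref{4.5l}, so it suffices to check that $X_{sj}$ normalizes $U_{\cR^\circ}$. This follows from the commutator inclusion $[X_{sj},U_\cR]\subseteq U_{\cR^\circ}$ extracted in the proof of \ref{4.9}: for $u\in U_{\cR^\circ}\subseteq U_\cR$ and $\alpha\in\F_q$ one has $x_{sj}(\alpha)\,u\,x_{sj}(-\alpha)=[x_{sj}(\alpha),u]\,u\in U_{\cR^\circ}\cdot U_{\cR^\circ}=U_{\cR^\circ}$. Since $U_J=\langle U_\cR,X_{sj}\rangle$ and both $U_\cR$ and $X_{sj}$ normalize $U_{\cR^\circ}$, we conclude $U_{\cR^\circ}\trianglelefteq U_J$.

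For the quotient I would note that $U_J/U_{\cR^\circ}$ is generated by the images of the root subgroups $X_{ab}$ with $(a,b)\in J$; those with $(a,b)\in\cR^\circ$ are trivial in the quotient, so (using $\cR=\cR^\circ\cup\p$ with $\p=\{(i_1,j_1),\ldots,(i_k,j_k)\}$) it is generated by the images of $X_{i_1j_1},\ldots,X_{i_kj_k}$ and of $X_{sj}$. These images commute pairwise: the $X_{i_\nu j_\nu}$ commute modulo $U_{\cR^\circ}$ by \ref{4.5l}, while $[X_{sj},X_{i_\nu j_\nu}]\subseteq[X_{sj},U_\cR]\subseteq U_{\cR^\circ}$. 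Hence the multiplication map
\[
X_{i_1j_1}\times\cdots\times X_{i_kj_k}\times X_{sj}\;\longrightarrow\;U_J/U_{\cR^\circ}
\]
is a well-defined group homomorphism, and it is surjective by the generation statement. It is injective by counting: $|U_J/U_{\cR^\circ}|=q\,|U_\cR/U_{\cR^\circ}|=q\cdot q^k=q^{k+1}$ by \ref{4.5l} and the set factorization $U_J=X_{sj}U_\cR$ above, which equals the order of the source. This yields the asserted isomorphism. The argument is routine; the only delicate point, the commutator bookkeeping $[X_{sj},U_\cR]\subseteq U_{\cR^\circ}$, has already been carried out inside the proof of \ref{4.9}, so I do not expect a genuine obstacle — the corollary is essentially a repackaging of that lemma.
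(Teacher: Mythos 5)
Your proposal is correct and follows the same route as the paper: the paper derives the corollary directly from the commutator inclusion $[X_{sj},U_\cR]\subseteq U_{\cR^\circ}$ established in the proof of Lemma \ref{4.9}, together with $\cR=\cR^\circ\cup\p$, $U_{\cR^\circ}\trianglelefteq U_\cR$ and $U_\cR/U_{\cR^\circ}\cong X_{i_1j_1}\times\cdots\times X_{i_kj_k}$. You merely spell out the order count and the generation argument that the paper leaves implicit.
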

Since $U_{\cR^\circ}$ acts trivially on $[A]$ from the right by \ref{3.6}, the linear character $\theta_A$ of \ref{3.6}, henceforth denoted by $\theta_A^r$,  by which $U_\cR$ acts from the right on $[A]$ can be extended to $U_J$ by any linear character $\theta_{sj}^\beta$ of $X_{sj}, \beta\in\F_q$.

\medskip

We set $\Gamma=\{(a, j_\nu)\in \Phi^+\,|\,\nu=1, \ldots, k, j_\nu<a<i_\nu\}$, so $\Phi^+=\cR\dot \cup \Gamma$. We fix a linear ordering on $\Gamma$ such that $(s,j)\in \Gamma$ comes first. Note that by Chevelley's commutator formula
\[\big\{\prod\nolimits_{(a,b)\in \Gamma} x_{ab}(\alpha_{ab})\,|\,\alpha_{ab}\in \F_q,\,\forall\, (a,b)\in \Gamma\big\}\]
is a complete set of right coset representatives of $U_\cR$ in $U$ and is in bijection with the idempotents $[B]\in \O_A^r$, where the products are taken in the given order of $\Gamma$.

\begin{Theorem}\label{4.11}
In the setting of \ref{4.6} keeping the notation introduced above, let $\beta\in \F_q$. Then
\begin{enumerate}
\item [1)] $[A] f_{sj}^\beta u =\theta_A(u)[A] f_{sj}^\beta$ for all $u\in U_\cR$.
\item [2)] $[A]f_{sj}^\beta x_{sj}(\alpha)=\theta(\beta \alpha)[A]f_{sj}^\beta$. Thus $U_J$ acts on $[A]f_{sj}^\beta$ by the linear character $\tilde \theta=\theta_A \times \theta_{sj}^\beta$ with trivial $U_{\cR^\circ}$-action.
\item [3)] $[A]f_{sj}^\beta \C U$ is isomorphic to $\Ind_{U_J}^U \C_{\tilde \theta}$, where $\C_{\tilde \theta}$ is the one dimensional $\C U_J$-module affording the linear character $\tilde \theta$ of $U_J$.
The set $\{[A]f_{sj}^\beta u\,|\, u\in C_{\Gamma'} \}$ is a basis of $[A] f_{sj}^\beta \C U$, where $\Gamma'=\Gamma\setminus \{\(s,j)\}$ and
$
C_{\Gamma'}=\big\{\prod\nolimits_{(a,b)\in \Gamma'} x_{ab}(\alpha_{ab})\,|\,\alpha_{ab}\in \F_q,\,\forall\, (a,b)\in \Gamma'\big\}.
$
In particular $\dim_\C [A]f_{sj}^\beta \C U=q^{a-1}=\frac{1}{q}\dim_\C \C \O_A^r$, where $a=|\Gamma|$.
\end{enumerate}
\end{Theorem}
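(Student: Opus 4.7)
My plan is to dispatch parts (1) and (2) by a short direct computation, then combine them with a dimension count to deduce (3).

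For part (1), the shortcut I would use is Corollary \ref{4.8}, which rewrites $[A]f_{sj}^\beta$ as $f_{ri}^{\beta\sigma}[A]$. Since $u\in U_\cR$ acts on $[A]$ from the right by the scalar $\theta_A(u)$ (Theorem \ref{3.6}), sliding $u$ past the left factor $f_{ri}^{\beta\sigma}$ gives
\[
[A] f_{sj}^\beta u \;=\; f_{ri}^{\beta\sigma}[A] u \;=\; \theta_A(u)\, f_{ri}^{\beta\sigma}[A] \;=\; \theta_A(u)\,[A] f_{sj}^\beta.
\]
Part (2) is then immediate: $f_{sj}^\beta$ is the primitive idempotent of the commutative algebra $\C X_{sj}$ affording $\theta_{sj}^\beta$, so $f_{sj}^\beta x_{sj}(\alpha)=\theta(\beta\alpha)f_{sj}^\beta$, and left-multiplying by $[A]$ yields the claim. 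Combined with (1) and the decomposition $U_J = U_\cR\cdot X_{sj}$ from Corollary \ref{4.10}, this shows that $U_J$ stabilises the line $\C[A]f_{sj}^\beta$ and acts on it via the linear character $\tilde\theta = \theta_A\times\theta_{sj}^\beta$, with $U_{\cR^\circ}$ in the kernel.

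For part (3), the universal property of induction turns this stabiliser information into a canonical $\C U$-epimorphism
\[
\pi : \Ind_{U_J}^U \C_{\tilde\theta} \twoheadrightarrow [A]f_{sj}^\beta\, \C U, \qquad 1\otimes u \mapsto [A]f_{sj}^\beta u.
\]
Since $\Phi^+ = J\,\dot\cup\,\Gamma'$ with $J=\cR\cup\{(s,j)\}$, Chevalley's commutator formula shows that $C_{\Gamma'}$ is a right transversal of $U_J$ in $U$, so the images $\{[A]f_{sj}^\beta c \mid c\in C_{\Gamma'}\}$ span $[A]f_{sj}^\beta \C U$ and have cardinality $q^{|\Gamma'|}=q^{a-1}=[U:U_J]$. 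It remains to show that $\pi$ is injective, which amounts to checking $\dim_\C [A]f_{sj}^\beta \C U = q^{a-1}$.

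This dimension identity is where I expect the main obstacle. I would establish it by invoking Corollary \ref{4.8} once more to write $[A]f_{sj}^\beta \C U = f_{ri}^{\beta\sigma}\,\C\O_A^r$. By Theorem \ref{4.5}, $X_{ri}\subseteq U_{\hat\cL}$ acts on $\C\O_A^r$ as $\C U$-endomorphisms, and the $\{f_{ri}^\gamma\}_{\gamma\in\F_q}$ are orthogonal primitive idempotents of $\C X_{ri}$ summing to $1$. This produces a direct sum decomposition $\C\O_A^r=\bigoplus_{\gamma\in\F_q} f_{ri}^\gamma \C\O_A^r$ into $q$ non-zero $\C U$-submodules. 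A direct inspection of the left monomial action (the left analogue of Corollary \ref{2.9}) shows that every $X_{ri}$-orbit on the idempotent basis $\{[B]\mid B\in\O_A^r\}$ is free, because $x_{ri}(\alpha)$ alters the value of $B$ at the hook intersection $(i,s)$ by $-\alpha A_{rs}=-\alpha\tau$ with $\tau\neq0$. Consequently, the $q$ isotypic summands all have equal dimension $|\O_A^r|/q = q^{a-1}$, matching $[U:U_J]$ and completing the proof.
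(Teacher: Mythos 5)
Your argument is correct. Note that the paper states Theorem \ref{4.11} without proof (it is followed immediately by a remark), so there is nothing to match it against line by line; judged on its own, every step checks out. Parts (1) and (2) via Corollary \ref{4.8} — rewriting $[A]f_{sj}^\beta$ as $f_{ri}^{\beta\sigma}[A]$ and letting left and right multiplication commute — is exactly the mechanism the section is built around, and the identification $U_J=U_\cR\cdot X_{sj}$ from \ref{4.10} correctly yields the character $\tilde\theta$ with trivial $U_{\cR^\circ}$-action.

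For part (3), your route to the key identity $\dim_\C [A]f_{sj}^\beta\,\C U=q^{a-1}$ — decomposing $\C\O_A^r=\bigoplus_{\gamma}f_{ri}^\gamma\,\C\O_A^r$ and showing each summand has dimension $|\O_A^r|/q$ because $X_{ri}$ permutes the idempotent basis freely (the entry at the hook intersection $(i,s)$ moves by $-\alpha\tau\neq 0$) — is sound: a free monomial action of $X_{ri}\cong(\F_q,+)$ on each orbit gives a copy of the regular representation, so all $q$ characters of $X_{ri}$ occur with equal multiplicity $q^{a-1}$. A marginally more direct alternative, closer in spirit to the basis statement actually asserted in (3), is to expand $[A]f_{sj}^\beta u=\frac{1}{q}\sum_{\alpha}\theta(-\beta\alpha)[A]x_{sj}(\alpha)u$ and observe that, as $u$ ranges over $C_{\Gamma'}$ and $\alpha$ over $\F_q$, the elements $[A]x_{sj}(\alpha)u$ are nonzero scalar multiples of the $q^a$ pairwise distinct basis idempotents of $\C\O_A^r$; the vectors $[A]f_{sj}^\beta u$ therefore have pairwise disjoint supports and are linearly independent outright, giving the basis and the dimension simultaneously without invoking the endomorphism action of $X_{ri}$. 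Either way the conclusion and the identification with $\Ind_{U_J}^U\C_{\tilde\theta}$ by the surjection-plus-dimension-count argument are correct.
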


We remark that theorem \ref{4.11} can be easily extended to idempotents $f_{sj}^\beta$ for more positions $(s,j)\in \hat \cR\setminus \cR$ as long as the corresponding root subgroups commute modulo $U_{\cR^\circ }$. In the next section we shall use the technique of shifting the action of the endomorphism ring $E^r$ from the left to the right in \ref{4.7} to classify all verges $[A]\in \E$ such that $\C \O_A^r$ contains irreducible constituents of minimal dimension and to describe those.

\section{The irreducibles of minimal dimension}

Throughout this section let $[A]\in \E$ be a verge with main conditions $\main[A]=\p=\p_{_A}=\{(i_1, j_1),\ldots,(i_k,j_k)\}\subseteq \Phi^+$. Let $h_\nu=h_{i_\nu j_\nu}, \nu=1,\ldots,k$ and recall from section \ref{4} that $\cL^\circ$ consists of all positions in zero rows together with the positions in rows $i_1,\ldots, i_k$ not to the right of $(i_\nu, j_\nu), \nu=1,\ldots,k$, $\cL=\cL^\circ \cup \p$ and $\hat \cL$ arises by adding to $\cL$ all positions $(r,i)\in \Phi^+$ such that there exist $(i,j), (r,s)\in \p$ with $1\leqslant j<s<i<r\leqslant n$. Then $h_{ij}\cap h_{rs}=\{(i,s)\}\subseteq \Phi^+$ and left action by $X_{ri}$ on $[A]$ fills position $(i,s)$ in $[A]$ and leaves $[A]$ unchanged otherwise. The corresponding sets for the right action are denoted by $\cR^\circ\subseteq \cR^\circ \cup \p=\cR\subseteq \hat \cR$, replacing rows by columns and $(r,i)\in \hat \cL$ by $(s,j)\in \hat \cR$.

\begin{Defn}\label{5.1}
Let $1\leqslant \nu, \mu\leqslant k$. We say $(i_\nu, j_\nu)$ and $(i_\mu, j_\mu)$ are {\bf connected} (in $\p$), if the following holds:
\begin{enumerate}
\item [1)] $j_\nu=i_\mu$ or $j_\mu=i_\nu$ (so  $\nu\not=\mu$ and $h_\nu, h_\mu$ meet at the diagonal).
\item [2)] There exists a hook $h_\rho$, $1\leqslant \rho \leqslant k$, such that $h_\rho$ intersects both, $h_\nu$ and $ h_\mu$, each in precisely one position in $\Phi^+$.
\end{enumerate}
If there are no pairs of connected main conditions in $\p$, we say $\p$ is {\bf hook disconnected}.
\end{Defn}

We illustrate this with main conditions $(i,j), (j,m), (r,s)\in \p$, where $1\leqslant s<j<r<i\leqslant n$, hence $h_{ij}\cap h_{rs}=\{(r,j)\}\not=\emptyset$, and $1\leqslant m<s<j<r\leqslant n$, so $h_ {jm}\cap h_{rs}=\{(j,s)\}\neq \emptyset$. Putting both inequalities together, we obtain  $1\leqslant m<s<j<r<i\leqslant n$.
\begin{equation}\label{5.2}
\begin{picture}(120,105)
\put(-40,-60){\line(1,0){160}}
\put(-40,-60){\line(0,1){160}}
\put(-10,10){\line(1,0){60}}
\put(-10,10){\line(0,1){60}}
\put(-40,100){\line(1,-1){160}}
\put(20,-20){\line(1,0){60}}
\put(20,-20){\line(0,1){60}}
\put(50,-40){\line(0,1){50}}
\put(50,-40){\line(1,0){50}}
\put(10,-31){\footnotesize $(r,s)$}
\put(52,-31){\footnotesize $(r,j)$}
\put(38,-50){\footnotesize $(i,j)$}
\put(20,40){\circle*{3}}
\put(20,-20){\circle*{3}}
\put(50,-40){\circle*{3}}
\put(22,45){$s$}
\put(-10,70){\circle*{3}}
\put(-8,75){$m$}
\put(50,10){\circle*{3}}
\put(52,13){$j$}
\put(80,-20){\circle*{3}}
\put(85,-19){$r$}
\put(100,-40){\circle*{3}}
\put(105,-39){$i$}
\put(70,-50){\footnotesize $(i,r)$}
\put(76.1,-42.8){$\times$}
\put(46.1,-22.8){$\times$}
\put(22,-1){\footnotesize $(j,s)$}
\put(-25,0){\footnotesize $(j,m)$}
\put(-37,37){\footnotesize $(s,m)$}
\put(16,7){$\times$}
\put(-14,37){$\times$}
 \end{picture}
\end{equation}

\vskip80pt

We see from (\ref{5.2}) that $(i,r), (r,j)\in \hat \cL\setminus \cL$ but the sum $(i,j)$ of these roots in $\Phi^+$ is contained in $\p$. Thus the commutator group $[X_{ir}, X_{rj}]=X_{ij}$ acts by the nontrivial character $\theta_{ij}^\tau$ from the left (and right) on $[A]$ with $0\neq \tau=A_{ij}\in \F_q$. Similarly $X_{jm}=[X_{js}, X_{sm}]$ acts by $\theta_{jm}^\rho$ on $[A]$ with $\rho=A_{jm}$.

\begin{Theorem}\label{5.3}
Suppose $1\leqslant m<s<j<r<i\leqslant n$ such that $(i,j), (j,m), (r,s)\in \p$. Then $\C \O_A^r$ does not contain irreducible constituents of minimal dimension.
\end{Theorem}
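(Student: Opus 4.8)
The plan is to translate ``$\C\O_A^r$ has an irreducible constituent of minimal dimension'' into an extension problem for the linear character $\theta_A$ on the pattern group $U_{\hat\cL}$, and then to obstruct that extension using the configuration already analysed around (\ref{5.2}). Since $\C U$ is semisimple, $\C\O_A^r$ is a semisimple module and $E^r=\End_{\C U}\C\O_A^r$ is a semisimple $\C$-algebra; by Corollary \ref{3.11}(3) (together with Theorem \ref{3.10} for the correspondence of multiplicities and dimensions), $\C\O_A^r$ contains a constituent of minimal dimension exactly when $E^r$ has a block isomorphic to $\C$, i.e.\ admits a one-dimensional representation. By Theorem \ref{4.5}, $E^r\cong\epsilon_{_A}\C U_{\hat\cL}$, where $\epsilon_{_A}=\frac1{|U_\cL|}\sum_{x\in U_\cL}\overline{\theta_A(x)}\,x$ is the central idempotent of $\C U_\cL$ affording $\theta_A$ (with trivial $U_{\cL^\circ}$-action). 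A one-dimensional representation of $\epsilon_{_A}\C U_{\hat\cL}$ pulls back along the algebra map $\C U_{\hat\cL}\to\epsilon_{_A}\C U_{\hat\cL}$, $a\mapsto\epsilon_{_A}a$, to a linear character $\lambda\colon U_{\hat\cL}\to\C^*$ with $\lambda(\epsilon_{_A})=1$; and since $\lambda|_{U_\cL}$ is linear, $\lambda(\epsilon_{_A})=\langle\lambda|_{U_\cL},\theta_A\rangle$ forces $\lambda|_{U_\cL}=\theta_A$. So the theorem reduces to: $\theta_A$ does not extend to a linear character of $U_{\hat\cL}$.

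For this I would invoke the observation recorded after (\ref{5.2}): under the hypothesis $1\le m<s<j<r<i\le n$ with $(i,j),(j,m),(r,s)\in\p$, both $(i,r)$ and $(r,j)$ lie in $\hat\cL\setminus\cL$ — the left root subgroup $X_{ir}$ moves only the hook intersection $(r,j)=h_{ij}\cap h_{rs}$ of $[A]$ (row $i$ of the verge $A$ has its unique nonzero entry at $(i,j)$), and $X_{rj}$ moves only $(j,s)=h_{jm}\cap h_{rs}$, where the hypothesis $(j,m)\in\p$ with $m<s$ is precisely what makes $(j,s)$ a hook intersection — while the sum of these two roots is the main condition $(i,j)\in\p$. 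Hence $X_{ir},X_{rj}\le U_{\hat\cL}$, and Chevalley's commutator formula gives $[x_{ir}(\alpha),x_{rj}(\beta)]=x_{ij}(\alpha\beta)$ for all $\alpha,\beta\in\F_q$, so the whole root subgroup $X_{ij}$ lies in the derived subgroup $[U_{\hat\cL},U_{\hat\cL}]$.

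Now I would close the argument: suppose towards a contradiction that $\lambda$ is a linear character of $U_{\hat\cL}$ with $\lambda|_{U_\cL}=\theta_A$. Being linear, $\lambda$ is trivial on $[U_{\hat\cL},U_{\hat\cL}]$, hence on $X_{ij}$. But $(i,j)\in\p\subseteq\cL$, so $X_{ij}\le U_\cL$, and by Theorem \ref{3.6} the restriction $\theta_A|_{X_{ij}}$ is the character $x_{ij}(\alpha)\mapsto\theta(A_{ij}\alpha)$, which is nontrivial because $A_{ij}\neq0$ (as $(i,j)\in\main[A]$) and $\theta$ was chosen nontrivial. This contradicts $\lambda|_{X_{ij}}=\theta_A|_{X_{ij}}$. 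Therefore $E^r$ has no block isomorphic to $\C$, so $\C\O_A^r$ has no irreducible constituent of minimal dimension. The main work in this plan is the algebraic reduction of the first paragraph — identifying minimal-dimension constituents with one-dimensional blocks of $E^r\cong\epsilon_{_A}\C U_{\hat\cL}$ and then with extensions of $\theta_A$ to $U_{\hat\cL}$; the geometric input ($X_{ij}=[X_{ir},X_{rj}]$ with $(i,r),(r,j)\in\hat\cL$) is already in hand from the discussion preceding the statement, and once it is fed in the contradiction is immediate. An essentially equivalent route would bypass $\hat\cL$ and produce the same obstruction directly inside $\C\O_A^r$ via the shifting idempotents of \ref{4.8} and Theorem \ref{4.11}.
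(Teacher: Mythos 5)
Your proposal is correct and follows essentially the same route as the paper: both reduce via Corollary \ref{3.11} and Theorem \ref{4.5} to showing that $\epsilon_{_A}\C U_{\hat\cL}$ has no one-dimensional representation, and both obstruct this by combining $X_{ij}=[X_{ir},X_{rj}]$ with $(i,r),(r,j)\in\hat\cL$ against the nontriviality of $\theta_A$ on $X_{ij}$. The paper phrases the final contradiction directly with an idempotent $e$ affording the one-dimensional representation (and the pattern subgroup $U_\Delta\cong U_3(q)$) rather than pulling back to a linear character of $U_{\hat\cL}$, but this is only a cosmetic difference.
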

\begin{proof}
Obviously the set $\Delta=\{(i,r), (r,j), (i,j)\}$ is closed in $\Phi^+$, and $(i,r), (r,j)\in \hat \cL\setminus \cL$, whereas $(i,j)\in \p\subseteq \cL$ (compare (\ref{5.2})). Now $U_{\Delta}\leqslant U_{\hat \cL}$ is isomorphic to the unitriangular group $U_3(q)$. Since $X_{ij}=[X_{ir}, X_{rj}]$,  it  acts trivially on every one dimensional $\C U_\Delta$-module.

By \ref{3.11} $\C \O_A^r$ contains irreducible constituents of minimal dimension if and only if $E^r=\End_{\C U} (\C \O_A^r)$ has one dimensional representations. By \ref{4.5} $E^r=\epsilon_{_A} \C U_{\hat \cL}$, where $\epsilon_{_A}\in \C U_{\hat \cL}$ is the central idempotent (see \ref{4.6}) on which $U_{\cL^\circ}$ acts trivially and $U_{\cL}/U_{\cL^\circ}$ by the linear character $\theta_A$ defined in \ref{3.6}. In particular $X_{ij}$ acts on $\epsilon _{_A}$ by the nontrivial character $\theta_{ij}^{A_{ij}}$, since $(i,j)\in \p, A_{ij}\neq 0$. Let $e\in E^r$ be an idempotent affording a one dimensional representation of $E^r$. Since $\epsilon_{_A}$ is the identity of $E^r$, we have $\epsilon_{_A} e=e$. We obtain $e=x_{ij}(\alpha)e=x_{ij}(\alpha) \epsilon_{_A} e=\theta(A_{ij}\alpha) \epsilon_{_A}e=\theta(A_{ij} \alpha) e$, for all $\alpha\in \F_q$ and hence $\theta(A_{ij}\alpha)=1$ for all $\alpha\in \F_q$, a contradiction. Thus $E^r$ has no one dimensional representations and the theorem is proved.
\end{proof}

We now turn to the hook disconnected case.

\begin{Prop}\label{5.4}
Suppose $\p$ is hook disconnected and let $[A]\in \E$ be a verge with $\main[A]=\p_{_A}=\p$. Set $ \hat  \cL ^{-} =\hat \cL\setminus \p$, $ \hat  \cR ^{-} =\hat \cR\setminus \p$. Then $ \hat  \cL ^{-} ,  \hat  \cR ^{-} $ are closed subsets of $\Phi^+$ and
$$U_{\hat \cL}/ U_{\cL^\circ}\cong U_{\hat \cL}/U_{\cL}\times U_{\cL}/U_{\cL^\circ},\quad
U_{\hat \cR}/ U_{\cR^\circ}\cong U_{\hat \cR}/U_{\cR}\times U_{\cR}/U_{\cR^\circ}.$$
\end{Prop}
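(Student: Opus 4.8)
The plan is to show first that $\hat\cL^{-}$ is closed, then that the extension $U_{\cL}/U_{\cL^\circ}\trianglelefteq U_{\hat\cL}/U_{\cL^\circ}$ splits, and dually for $\cR$. Recall from Theorem \ref{4.5} that $\hat\cL$ is closed, that $U_{\cL^\circ}\trianglelefteq U_{\cL}\trianglelefteq U_{\hat\cL}$, and that $U_{\cL}/U_{\cL^\circ}=X_{i_1j_1}\times\cdots\times X_{i_kj_k}$ lies in the centre of $U_{\hat\cL}/U_{\cL^\circ}$. Since $\hat\cL=\cL^\circ\,\dot\cup\,\p\,\dot\cup\,(\hat\cL\setminus\cL)$ and $\hat\cL^{-}=\cL^\circ\,\dot\cup\,(\hat\cL\setminus\cL)$, proving $\hat\cL^{-}$ closed amounts to checking: whenever $(a,r),(r,c)\in\hat\cL^{-}$ with $(a,c)\in\Phi^+$, the root $(a,c)$ does not land in $\p$. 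Equivalently, I must rule out the configuration in which a product of two roots from $(\hat\cL\setminus\cL)\cup\cL^\circ$ is a main condition. But a main condition $(a,c)=(a,r)+(r,c)$ with both summands in $\hat\cL\setminus\cL$ is precisely the picture of \eqref{5.2}: $(a,r)$ and $(r,c)$ are hook-arm positions each lying to the right of a main condition in its own row, with $(a,c)\in\p$, and this forces the three main conditions $(a,c),(r,j),(c,m)$ (in the notation of \eqref{5.2}, $(i,j),(r,s),(j,m)$) to be such that $(a,c)$ is connected to $(c,m)$ via $(r,s)$ — contradicting hook-disconnectedness. The remaining cases, where one summand lies in $\cL^\circ$, are handled exactly as in the proof of Theorem \ref{4.5}: if $(a,r)\in\cL^\circ$ then $r\notin\{i_1,\dots,i_k\}$ (zero row) or $(a,r)$ is a sub-main-condition position, and in either case $(a,c)$ inherits the same property in row $a$ and so lies in $\cL^\circ\subseteq\hat\cL^{-}$, never in $\p$. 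So $\hat\cL^{-}$ is closed; dually $\hat\cR^{-}$ is closed.

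Next I construct the splitting. Since $\cL^\circ$ and $\hat\cL^{-}$ are both closed subsets of $\Phi^+$ with $\cL^\circ\subseteq\hat\cL^{-}$, and $\cL^\circ\trianglelefteq\hat\cL$ (it is normal already in $U_{\hat\cL}$ by Theorem \ref{4.5}), the subgroup $U_{\hat\cL^{-}}$ is well defined and $U_{\cL^\circ}\trianglelefteq U_{\hat\cL^{-}}$. I claim $U_{\hat\cL}/U_{\cL^\circ}=\bigl(U_{\hat\cL^{-}}/U_{\cL^\circ}\bigr)\times\bigl(U_{\cL}/U_{\cL^\circ}\bigr)$ internally. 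The product of these two subgroups is all of $U_{\hat\cL}/U_{\cL^\circ}$ because $\hat\cL=\hat\cL^{-}\cup\cL$ as sets of roots (indeed $\hat\cL=\hat\cL^{-}\,\dot\cup\,\p$ and $\cL=\cL^\circ\,\dot\cup\,\p$), so every element of $U_{\hat\cL}$ is congruent mod $U_{\cL^\circ}$ to a product of a $U_{\hat\cL^{-}}$-part and a $U_{\p\cup\cL^\circ}$-part. Their intersection is trivial mod $U_{\cL^\circ}$ since $\hat\cL^{-}\cap\cL=\cL^\circ$. Both factors are normal in $U_{\hat\cL}/U_{\cL^\circ}$: $U_{\cL}/U_{\cL^\circ}$ is even central by Theorem \ref{4.5}, and $U_{\hat\cL^{-}}/U_{\cL^\circ}$ is normal because $\hat\cL^{-}$ is closed and $[U_{\hat\cL},U_{\hat\cL^{-}}]\subseteq U_{\hat\cL^{-}}$ follows from Chevalley's commutator formula together with the fact that a commutator $[X_{ab},X_{cd}]$ lands in $X_{ad}$ (or is trivial), and $(a,d)$ stays in $\hat\cL^{-}$ whenever one of the arguments does — the only way to escape $\hat\cL^{-}$ into $\p$ was the configuration \eqref{5.2} that hook-disconnectedness forbids. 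Hence the internal direct product holds, and since $U_{\hat\cL^{-}}/U_{\cL^\circ}\cong U_{\hat\cL}/U_{\cL}$ (both are quotients of $U_{\hat\cL}$ by the two complementary normal factors), we get the first displayed isomorphism. The argument for $\hat\cR$ is verbatim dual, using Theorem \ref{4.5l} in place of Theorem \ref{4.5}.

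The main obstacle is the verification that $\hat\cL^{-}$ is closed — specifically, showing no product of two ``extra'' arm-roots in $\hat\cL\setminus\cL$ can be a main condition. This is where the hook-disconnectedness hypothesis is essential and where one must carefully re-examine the case analysis of Theorem \ref{4.5}'s proof: there, the troublesome case produced exactly a main condition $(a,c)=(a,r)+(r,c)\in\p$, and one must recognise that this forces a connected pair of main conditions (via Definition \ref{5.1}(1) with the diagonal-meeting hooks, plus Definition \ref{5.1}(2) with the common intersecting hook $h_\rho=h_{rs}$). Once that case is eliminated, everything else is the routine bookkeeping of Chevalley's formula and the standard fact that pattern subgroups attached to closed root subsets with containments inducing normality give semidirect — here direct, by centrality — decompositions.
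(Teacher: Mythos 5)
Your proposal is correct and follows essentially the same route as the paper: the only nontrivial closure case is two roots of $\hat\cL\setminus\cL^\circ$ summing to a main condition, which produces exactly the connected configuration of \eqref{5.2} and contradicts hook-disconnectedness, while the mixed cases reduce to the normality of $U_{\cL^\circ}$ in $U_{\hat\cL}$ and the splitting follows from centrality of $U_{\cL}/U_{\cL^\circ}$ plus trivial intersection and full product. (Only cosmetic quibbles: in your zero-row case the condition should read $a\notin\{i_1,\dots,i_k\}$ rather than $r\notin\{i_1,\dots,i_k\}$, and the separate verification that $U_{\hat\cL^{-}}/U_{\cL^\circ}$ is normal is redundant given the centrality of the complementary factor.)
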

\begin{proof}
Suppose $(a,b), (b,c)\in \hat   \cL^{-}=$. So $1\leqslant c<b<a\leqslant n$. If $(a,b), (b,c)\in \cL^\circ$, then $(a,c)\in \cL^\circ\subseteq  \hat  \cL ^{-} $, since $\cL^\circ$ is closed in $\Phi^+$. Suppose $(a,b)\in  \hat  \cL ^{-} \setminus\cL^\circ $, but $(b,c)\in \cL^\circ$. Since $U_{\cL^\circ}\trianglelefteq U_{\hat  \cL}$ by \ref{4.5}, we have $x^{-1}y^{-1} x =z \in U_{\cL^\circ}$ for $x\in X_{ab}, y\in X_{bc}\subseteq U_{\cL^\circ}$, and hence $X_{ac}=[X_{ab}, X_{bc}]$ contains $x^{-1} y^{-1}x y=zy\in U_{\cL^\circ}$, proving $(a,c)\in \cL^\circ \subseteq \hat \cL^{-}$. If $(a,b)\in \cL^\circ, (b,c)\notin \cL^\circ$, a similar argument shows $(a,c)\in \cL^\circ \subseteq  \hat  \cL ^{-} $.

Thus suppose $(a,b), (b,c)\notin \cL^\circ$, but both are contained in $ \hat  \cL ^{-} =\hat \cL\setminus \p$. Since $\hat \cL$ is closed by \ref{4.5},  $(a,c)\in \hat \cL$. Thus we have to show $(a,c)\notin\p$. Suppose  $(a,c)\in \p$.  We illustrate the situation:
\begin{equation}\label{5.5}
\begin{picture}(120,110)
\put(-40,-100){\line(1,0){200}}
\put(-40,-100){\line(0,1){200}}
\put(-10,10){\line(1,0){60}}
\put(-10,10){\line(0,1){60}}
\put(-40,100){\line(1,-1){200}}
\put(20,-50){\line(1,0){90}}
\put(20,-50){\line(0,1){90}}
\put(50,-80){\line(0,1){90}}
\put(50,-80){\line(1,0){90}}
\put(10,-61){\footnotesize $(b,d)$}
\put(52,-61){\footnotesize $(b,c)$}
\put(38,-90){\footnotesize $(a,c)$}
\put(20,40){\circle*{3}}
\put(20,-50){\circle*{3}}
\put(50,-80){\circle*{3}}
\put(22,45){$d$}
\put(-10,70){\circle*{3}}
\put(-8,75){$r$}
\put(50,10){\circle*{3}}
\put(52,13){$c$}
\put(110,-50){\circle*{3}}
\put(115,-49){$b$}
\put(140,-80){\circle*{3}}
\put(145,-79){$a$}
\put(100,-90){\footnotesize $(a,b)$}
\put(106.1,-82.8){$\times$}
\put(-25,0){\footnotesize $(c,r)$}
\put(21,0){\footnotesize $(c,d)$}
 \end{picture}
\end{equation}

\vskip100pt

Since $(b,c)\notin \p,$ but $(b,c)\in  \hat  \cL ^{-} \setminus \cL^\circ$, there must be a main condition strictly to the left of $(b,c)$ in row $b$, i.e. we find $1\leqslant d <c$ with $(b,d)\in \p$. So $(a,c), (c,r), (b,d)\in \p$ and $h_{ac}, h_{cr}$ intersect $h_{bd}$ at $(b,c)$ and $(c,d)$ respectively. Thus $\p$ is not hook disconnected, a contradiction, hence $(a,c)\in \hat \cL\setminus\p=\tilde \p$.

Thus $ \hat  \cL ^{-} =\cL\setminus \p$ is closed. We have
$\cL ^\circ \subseteq  \hat  \cL ^{-} \subseteq \hat \cL= \hat  \cL ^{-}  \dot \cup \p$, and  $U_{\cL^\circ}\trianglelefteq U_{\hat  \cL}$ by \ref{4.5}. Moreover $U_{ \hat  \cL ^{-} }/U_{\cL^\circ }\leqslant U_{\hat \cL}/U_{\cL^\circ }$ and $U_{ \cL}/U_{\cL^\circ }\cong$ {\huge $\times$}$\!_{(a,b)\in \p}\, X_{ab}$ is a central subgroup of $U_{\hat\cL}/U_{\cL^\circ }$ by \ref{4.5}. Now $U_{ \hat  \cL ^{-} }/U_{\cL^\circ }\cap U_{ \cL}/U_{\cL^\circ }=(1)$, and obviously $(U_{ \hat  \cL ^{-} }/U_{\cL^\circ })\cdot (U_{ \cL}/U_{\cL^\circ })=U_{\hat \cL}/U_{\cL^\circ }$. Since
$U_{ \cL}/U_{\cL^\circ }$ is central in  $U_{\hat \cL}/U_{\cL^\circ }$, we obtain $U_{\hat \cL}/ U_{\cL^\circ}\cong U_{ \hat  \cL  }/U_{\cL}\times U_{\cL}/U_{\cL^\circ}$, as desired. An analogous argument shows the claim for $\cR^\circ \trianglelefteq \hat \cR^{-} \subseteq \hat \cR \subseteq \Phi^+$.
\end{proof}

We remark in passing that $U_{\hat \cL}/U_{\cL}\cong U_{ \hat  \cL ^{-} }/U_{\cL^\circ }$ by the second isomorphism theorem, since $\cL\cap  \hat  \cL ^{-} =\cL^\circ$, $\cL\cup  \hat  \cL ^{-} =\hat \cL$ imply $U_{\cL ^\circ }=U_{\cL}\cap U_{ \hat  \cL ^{-} }$ and $U_{ \hat  \cL ^{-} }U_{\cL}=U_{\hat \cL}$. Obviously $\hat \cL\setminus \cL= \hat  \cL ^{-} \setminus \cL^\circ$ and $\hat \cR\setminus \cR= \hat  \cR ^{-} \setminus \cR^\circ.$

\begin{Cor}\label{5.6}
Suppose $\p\subseteq \Phi^+$ is a hook disconnected set of main conditions and let  $[A]\in \E$ be a verge with $\p_{_A}=\p$. Then $E^r=\End_{\C U} (\C \O_A^r)$ is isomorphic to the group algebra of $U_{ \hat  \cL ^{-} }/U_{\cL^\circ }$.
\end{Cor}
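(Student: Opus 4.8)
The plan is to combine Theorem~\ref{4.5}, Proposition~\ref{5.4} and Theorem~\ref{3.6}. By Theorem~\ref{4.5} we have $E^r\cong \epsilon_{_A}\C U_{\hat\cL}$, and by Remark~\ref{4.6} this is a two-sided ideal of $\C U_{\hat\cL}$ with identity element $\epsilon_{_A}$, the central idempotent on which $U_{\cL^\circ}$ acts trivially and $U_{\cL}/U_{\cL^\circ}$ acts by the linear character $\theta_A$ of Theorem~\ref{3.6}. The idea is to pass to the quotient of $U_{\hat\cL}$ by the normal subgroup $U_{\cL^\circ}$, then split off the (one-dimensional) contribution of the central factor $U_{\cL}/U_{\cL^\circ}$ by means of the direct product decomposition of Proposition~\ref{5.4}, leaving $\C[U_{\hat\cL^{-}}/U_{\cL^\circ}]$.

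First I would observe that $\epsilon_{_A}$ annihilates $u-1$ for every $u\in U_{\cL^\circ}$, since $\epsilon_{_A}u=\epsilon_{_A}$ by the trivial $U_{\cL^\circ}$-action. As $U_{\cL^\circ}\trianglelefteq U_{\hat\cL}$ (Theorem~\ref{4.5}), the kernel of the canonical surjection $\pi\colon \C U_{\hat\cL}\to\C[U_{\hat\cL}/U_{\cL^\circ}]$ is the two-sided ideal generated by the elements $u-1$, $u\in U_{\cL^\circ}$, and hence $\epsilon_{_A}\cdot\ker\pi=0$ by centrality of $\epsilon_{_A}$. If $\pi(\epsilon_{_A}x)=0$ then $\epsilon_{_A}x=\epsilon_{_A}(\epsilon_{_A}x)\in\epsilon_{_A}\ker\pi=0$, so $\pi$ restricts to an algebra isomorphism from $\epsilon_{_A}\C U_{\hat\cL}$ onto $\bar\epsilon_{_A}\,\C[U_{\hat\cL}/U_{\cL^\circ}]$, where $\bar\epsilon_{_A}=\pi(\epsilon_{_A})$ is the image of the primitive idempotent of $\C[U_{\cL}/U_{\cL^\circ}]$ affording $\theta_A$, viewed inside $\C[U_{\hat\cL}/U_{\cL^\circ}]$.

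Next I would invoke Proposition~\ref{5.4} in its internal form: together with the remark following it, which identifies $U_{\hat\cL^{-}}/U_{\cL^\circ}$ with $U_{\hat\cL}/U_{\cL}$ via the second isomorphism theorem, it says that $U_{\hat\cL}/U_{\cL^\circ}$ is the internal direct product of the subgroup $U_{\hat\cL^{-}}/U_{\cL^\circ}$ and the central subgroup $U_{\cL}/U_{\cL^\circ}$. Hence $\C[U_{\hat\cL}/U_{\cL^\circ}]\cong\C[U_{\hat\cL^{-}}/U_{\cL^\circ}]\otimes_{\C}\C[U_{\cL}/U_{\cL^\circ}]$ as $\C$-algebras, and under this isomorphism $\bar\epsilon_{_A}$ corresponds to $1\otimes e$, where $e$ is the primitive idempotent of $\C[U_{\cL}/U_{\cL^\circ}]$ affording $\theta_A$. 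By Theorem~\ref{3.6} the group $U_{\cL}/U_{\cL^\circ}\cong X_{i_1 j_1}\times\cdots\times X_{i_k j_k}$ is abelian, so $e\,\C[U_{\cL}/U_{\cL^\circ}]=\C e$ is one-dimensional; thus $\bar\epsilon_{_A}\,\C[U_{\hat\cL}/U_{\cL^\circ}]\cong\C[U_{\hat\cL^{-}}/U_{\cL^\circ}]\otimes\C e\cong\C[U_{\hat\cL^{-}}/U_{\cL^\circ}]$. Stringing these isomorphisms together with Theorem~\ref{4.5} yields $E^r\cong\C[U_{\hat\cL^{-}}/U_{\cL^\circ}]$.

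There is no serious obstacle; the corollary is essentially a bookkeeping consequence of the results already in place. The two points needing a little care are verifying that $\pi$ induces an \emph{isomorphism} rather than merely a surjection on $\epsilon_{_A}\C U_{\hat\cL}$ (which is exactly where $\epsilon_{_A}\cdot\ker\pi=0$ enters, equivalently a dimension count), and keeping the internal versus abstract direct-product bookkeeping straight so that $\bar\epsilon_{_A}$ genuinely lands in the central tensor factor $\C[U_{\cL}/U_{\cL^\circ}]$. Finally, since $E^r$ is an endomorphism algebra its multiplication is opposite to the one used above, but a group algebra is isomorphic to its opposite via $g\mapsto g^{-1}$, so the conclusion is unaffected.
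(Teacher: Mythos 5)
Your argument is correct and rests on the same key input as the paper's proof, namely the direct product decomposition $U_{\hat\cL}/U_{\cL^\circ}\cong U_{\hat\cL^{-}}/U_{\cL^\circ}\times U_{\cL}/U_{\cL^\circ}$ of Proposition~\ref{5.4}, with the central abelian factor absorbing the idempotent $\epsilon_{_A}$ into a one-dimensional piece. The paper phrases this as ``all algebras $(\C U_{\hat\cL})\epsilon$ for such primitive idempotents $\epsilon$ are isomorphic, so one may take $\epsilon$ trivial,'' whereas you carry $\epsilon_{_A}$ through the tensor decomposition explicitly; this is the same mechanism with the details the paper leaves implicit filled in.
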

\begin{proof}
By \ref{5.4} $U_{\hat \cL}/ U_{\cL^\circ}\cong U_{ \hat  \cL ^{-} }/U_{\cL^\circ }\times U_{\cL}/U_{\cL^\circ}$. Since $U_{\cL}/U_{\cL^\circ}\cong X_{i_1 j_1}\times \cdots \times X_{i_k j_k}$, $\p=\{(i_1, j_1),    \ldots,$ $ (i_k, j_k)\}$ $\subseteq \Phi^+$ the algebras of the form $(\C U_{\hat \cL}) \epsilon$ are all isomorphic for any primitive idempotent $\epsilon$ in $\c U_\cL$ affording a one dimensional representation with trivial $U_{\cL^\circ}$-action. Choosing $\epsilon$ to be the trivial idempotent $\frac{1}{|U_{\cL}|}\sum_{x\in U_{\cL}} x \in \C U_{\cL}$ yields $E^r=(\C U_{\hat \cL})\epsilon _{_A}\cong \C (U_{\hat \cL}/U_\cL)$, where $\epsilon_{_A}\in \C U_{\cL}$ is defined as in \ref{4.5}. 
\end{proof}

This result has some interesting consequences, which we shall list below. To simplify notation we denote in the situation of \ref{5.4} we identify $U_{ \hat  \cL ^{-} }/U_{\cL^\circ }$  and $U_{\hat \cL}/U_\cL$ and denote this group by $H=H_\p$, so $E^r\cong \C H$.

We define $I\subseteq  \hat  \cL ^{-} $ to consist of $\cL^\circ $ together will all positions $(i,b)\in  \hat  \cL ^{-} $ such that there exist $(i,a), (a,b)\in   \hat  \cL ^{-} \setminus \cL^\circ $. Then $I$ is closed in $\Phi^+$ and $U_I/U_{\cL^\circ}$ is the commutator subgroup $H'=[H, H]$ of $H=U_{ \hat  \cL ^{-} }/U_{\cL^\circ }$. Moreover $H/H'$ is isomorphic to the direct product of the root subgroups $X_{ab}$ with $(a,b)\in \hat  \cL ^{-} $, $(a,b)\notin I$. Let $c=| \hat  \cL ^{-} \setminus I|=| \hat  \cL ^{-} |-|I|$, then $H/H'$ is abelian of  order $q^c$ and hence $H$ has precisely $q^c$ many non isomorphic one dimensional representations. Keeping this notation, we have shown:
\begin{Theorem}\label{5.7}
Let $[A]\in \E$ be a verge with $\main[A]=\p$ then $\C \O_A^r$ has irreducible constituents of minimal dimension $q^{a-b}$, $a=|\O^r_A|,$ $ b=$ number of hook intersections of hooks centered at position in $\p$ if and only if $\p$ is hook disconnected. In this case we have
\begin{enumerate}
\item [1)] $\C \O_A^r$ has precisely $q^c$ many non isomorphic irreducible constituents of minimal dimension $q^{a-b}$, $c=|\hat \cL\setminus I|$ as above, each occurring with multiplicity one in $\c \O_A^r$.
\item[2)] $\End_{\C U}(\C \O_A^r)$ depends only on $\p=\main[A]$ not on the particular values of $A$ at positions $(i,j)\in \p$. Thus varying the nonzero values at positions $(i,j)\in \p$ through $\F_q^*$ one obtains $(q-1)^k q^c$ many non isomorphic irreducible $\C U$-modules which occur in their right orbits with minimal dimension.
\end{enumerate}
\end{Theorem}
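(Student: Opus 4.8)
The plan is to assemble the statement from Theorem~\ref{5.3}, Corollary~\ref{5.6}, the general facts on endomorphism rings in Theorem~\ref{3.10} and Corollary~\ref{3.11}, and Yan's orthogonality Theorem~\ref{2.13}. For the ``only if'' part I would argue contrapositively: if $\p$ is not hook disconnected there is a connected pair of main conditions, and after interchanging the two members if necessary (using condition~1) of Definition~\ref{5.1}) we may write it as $(i,j),(j,m)\in\p$ with $i>j>m$, so that $h_{ij}$ and $h_{jm}$ meet only on the diagonal. Condition~2) supplies a bridging hook $h_{rs}$, $(r,s)\in\p$, meeting each of $h_{ij}$ and $h_{jm}$ in precisely one position of $\Phi^+$. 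Each of those two one-point intersections can occur in only one of two geometric ways (an arm meeting a leg), each way forcing a chain of inequalities; running through the four combinations, the only internally consistent one is $m<s<j<r<i$, with the intersections at $(r,j)$ and $(j,s)$. This is exactly the hypothesis of Theorem~\ref{5.3}, which then yields that $\C\O_A^r$ has no irreducible constituent of minimal dimension.

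For the ``if'' part and assertion~1) assume $\p$ hook disconnected. By Corollary~\ref{5.6} the endomorphism ring $E^r=\End_{\C U}(\C\O_A^r)$ is isomorphic to the group algebra $\C H$ of $H=H_\p=U_{\hat\cL^{-}}/U_{\cL^\circ}\cong U_{\hat\cL}/U_\cL$; being a complex group algebra it has one-dimensional representations, so Corollary~\ref{3.11}(3) immediately gives that $\C\O_A^r$ contains irreducible constituents of minimal dimension $q^{a-b}$ (with $b$ the number of hook intersections, as before Theorem~\ref{3.8}). Now $\C\O_A^r$ is semisimple, so its simple constituents $S$ are in bijection with the simple $E^r$-modules, the module attached to $S$ being $\epsilon_{_S}E^r$ of dimension equal to the multiplicity of $S$ in $\C\O_A^r$ (Corollary~\ref{3.11}(2)); thus the minimal-dimension constituents --- equivalently, by Theorem~\ref{3.10}, those of multiplicity one --- correspond precisely to the one-dimensional simple $E^r$-modules, each occurring once in $\C\O_A^r$. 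Hence their number is the number of one-dimensional representations of $\C H$, namely $|H/[H,H]|=q^c$ by the computation $[H,H]=U_I/U_{\cL^\circ}$, $H/[H,H]\cong\prod_{(a,b)\in\hat\cL^{-}\setminus I}X_{ab}$ recorded before the theorem. This gives~1).

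For~2): the sets $\cL^\circ\subseteq\cL\subseteq\hat\cL$ and $I$ are defined purely from $\p$, and the isomorphism of Corollary~\ref{5.6} is $E^r=(\C U_{\hat\cL})\epsilon_{_A}\cong\C(U_{\hat\cL}/U_\cL)$, which does not involve the values $A_{i_\nu j_\nu}\in\F_q^*$; hence $E^r$ depends on $\p$ only, and so does the entire picture from~1). Let $A$ now run over the $(q-1)^k$ verges with $\main[A]=\p$, one for each choice of the $k$ nonzero entries in $\F_q^*$. For a fixed such $A$, part~1) yields $q^c$ pairwise non-isomorphic minimal-dimension constituents of $\C\O_A^r$; for two distinct verges $A\ne A'$ one has $\verge[A]=[A]\ne[A']=\verge[A']$, so by Theorem~\ref{2.13} the modules $\C\O_A^r$ and $\C\O_{A'}^r$ share no irreducible constituent. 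Therefore the resulting $(q-1)^k q^c$ irreducible $\C U$-modules are pairwise non-isomorphic, each occurring with multiplicity one and at minimal dimension in its right orbit module.

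I expect the main obstacles to be the case analysis in the first paragraph --- confirming that ``connected'' forces precisely the configuration of Theorem~\ref{5.3} and that no further connected configurations arise --- and making the correspondence in~1) fully precise, so that one-dimensional $E^r$-modules stand in an honest bijection with the minimal-dimension, multiplicity-one constituents and nothing is double counted. The remaining steps are bookkeeping with results already established.
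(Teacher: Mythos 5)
Your proposal is correct and follows essentially the same route as the paper, which derives Theorem~\ref{5.7} by assembling Theorem~\ref{5.3} (for the ``only if'' direction), Corollary~\ref{5.6} together with the computation $|H/[H,H]|=q^{c}$ (for the existence and count of one-dimensional $E^r$-modules, translated via Corollary~\ref{3.11} and Theorem~\ref{3.10} into minimal-dimension, multiplicity-one constituents), and Theorem~\ref{2.13} for the disjointness of orbit modules attached to distinct verges with the same $\p$. Your explicit four-case check that a connected pair forces exactly the configuration $m<s<j<r<i$ of Theorem~\ref{5.3} is a detail the paper leaves implicit, but it is the intended argument and your analysis of it is accurate.
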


Recall the conjectures of Higman  \cite{higman}, Lehrer \cite{lehrer} and Isaacs \cite{Isaacs2} from the introduction. Observe that $q=(q-1)^1+(q-1)^0$, hence $(q-1)^k q^c$ is indeed a polynomial in $(q-1)$ with nonnegative integral coefficients. Summing over all sets $\p\subseteq \Phi^+$ of main conditions, such that $\p$ is hook disconnected, we obtain:
\begin{Theorem}\label{5.8}
There exists a polynomial $d_n(t)\in \Z[t]$ with non negative coefficients such that $d_n(q-1)$ is the number of distinct irreducible characters of $U$, which occur with minimal degree in their supercharacters.
\end{Theorem}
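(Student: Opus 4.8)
The plan is to sum the enumeration of Theorem \ref{5.7} over all hook disconnected sets of main conditions, and then to observe that the resulting expression is, term by term, a polynomial in $(q-1)$ with nonnegative integer coefficients.

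First I would isolate the purely combinatorial input. Let $\SC_n$ be the finite set of subsets $\p\subseteq\Phi^+$ having at most one position in each row and at most one position in each column; directly from the definition of a verge idempotent these are exactly the sets occurring as $\main[A]$ for a verge $[A]\in\E$, and this description does not mention $q$. Inside $\SC_n$ let $\SC_n^{\mathrm{hd}}$ be the subset of hook disconnected $\p$; by Definition \ref{5.1} this condition refers only to the relative positions of the members of $\p$ inside $\Phi^+$, so $\SC_n^{\mathrm{hd}}$ depends on $n$ alone. For $\p\in\SC_n^{\mathrm{hd}}$ put $k(\p)=|\p|$ and $c(\p)=|\hat\cL\setminus I|$ as in Theorem \ref{5.7}; the pattern sets $\cL^\circ\subseteq\cL\subseteq\hat\cL$ and $I$ are built from $\p$ and $\Phi^+$ by the rules of section \ref{4} and those preceding Theorem \ref{5.7}, so $k(\p)$ and $c(\p)$ are functions of $\p$ not depending on $q$.

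Next I would assemble the count. By Theorem \ref{2.13} every irreducible character of $U$ is a constituent of exactly one Andr\'{e}-Yan supercharacter, and, up to $\C U$-module isomorphism, every supercharacter is afforded by $\C\O_A^r$ for a unique verge $[A]\in\E$; write $\p=\main[A]\in\SC_n$. By Theorem \ref{5.7}, $\C\O_A^r$ has an irreducible constituent of minimal dimension if and only if $\p\in\SC_n^{\mathrm{hd}}$, and in that case there are exactly $q^{c(\p)}$ of them, pairwise non-isomorphic and each of multiplicity one. For fixed $\p\in\SC_n^{\mathrm{hd}}$ there are precisely $(q-1)^{k(\p)}$ verges $[A]$ with $\main[A]=\p$, obtained by letting the entries of $A$ on the positions of $\p$ range independently over $\F_q^{*}$; since distinct verges afford orthogonal supercharacters (Theorem \ref{2.13}), the corresponding $q^{c(\p)}$-element sets of minimal-degree irreducibles are pairwise disjoint, and the same holds across distinct $\p,\p'\in\SC_n^{\mathrm{hd}}$. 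Hence the number of distinct irreducible characters of $U$ occurring with minimal degree in their supercharacter equals
\begin{equation}\label{dnq}
\sum_{\p\in\SC_n^{\mathrm{hd}}}(q-1)^{k(\p)}\,q^{c(\p)}.
\end{equation}

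Finally, writing $q=(q-1)+1$ and expanding $q^{c(\p)}=\sum_{i=0}^{c(\p)}\binom{c(\p)}{i}(q-1)^{i}$ shows that each summand of \eqref{dnq} is a polynomial in $(q-1)$ with nonnegative integer coefficients, hence so is their finite sum; explicitly \eqref{dnq} equals $d_n(q-1)$ for
\[
d_n(t)=\sum_{\p\in\SC_n^{\mathrm{hd}}}t^{\,k(\p)}\,(t+1)^{\,c(\p)}\in\Z[t],
\]
which has nonnegative integer coefficients and depends only on $n$. The step that will require the most care is the disjointness and exhaustiveness of the families attached to different verges --- that two minimal-degree constituents coming from different verges are never isomorphic, and that every minimal-degree irreducible arises this way. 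Both follow from Theorem \ref{2.13} (each irreducible lies in a unique supercharacter, and distinct verges give orthogonal supercharacters) together with the ``only if'' direction and the multiplicity-one statement of Theorem \ref{5.7}; the remaining manipulations are bookkeeping.
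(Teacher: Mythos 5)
Your proposal is correct and follows essentially the same route as the paper: Theorem \ref{5.7} gives $(q-1)^{k}q^{c}$ minimal-degree irreducibles per hook disconnected $\p$, one sums over all such $\p$, and the identity $q=(q-1)+1$ shows each summand is a polynomial in $(q-1)$ with nonnegative integer coefficients. The disjointness and exhaustiveness checks you flag are exactly the content of Theorems \ref{2.13} and \ref{5.7} that the paper also invokes (implicitly), so nothing further is needed.
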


Obviously the Lehrer variant of that statement holds as well.

\section{Monomial sources of irreducibles of minimal dimension}
In this section we shall determine monomial sources of the irreducibles $\C U$-modules of minimal dimension. Here a {\bf monomial source} for an irreducible $\C G$-module $S$, where $G$ is a finite group, means a subgroup $H$ of $G$ together with some one dimensional $\C H$-module $\C _\lambda$ such that $S=\Ind_H^G \C_\lambda$. The index ``$\lambda$'' denotes the linear character of $H$ by which $H$ acts on $\C _\lambda$. Thus $G$ acts monomially on the cosets of $H$ in $G$. A finite group $G$ is called a {\bf monomial group} ($M$-group for short), if every irreducible $\C G$-module is monomial, that is has a monomial source. It is well know (see e.g. Isaacs' book \cite{Isaacsbook}) that every supersolvable and hence every nilpotent group is an $M$-group. Thus in particular finite $p$-groups and hence the unitriangular groups $U_n(q)$ are $M$-groups. Halasi  has shown  in \cite{halasi} that every irreducible character of $U$ is induced from a linear character of
some $\F_q$-algebra subgroup of $U$. In \ref{6.11} we shall prove that for
irreducibles of minimal dimension, the $\F_q$-algebra subgroup can be
chosen to be a pattern subgroup.

We continue with the setting of the previous section. Thus let $\p\subseteq \Phi^+$ be a subset of $\Phi^+$ of main conditions and let $[A]\in \E$ be a verge with main conditions $\p_{_A}=\p$. If not stated otherwise, we assume now that $\p$ is hook disconnected. Recall from the previous sections the definition of the closed subsets $\cL^\circ \subseteq \cL=\cL^\circ \cup \p\subseteq \hat \cL$,  $\cR^\circ \subseteq \cR=\cR^\circ \cup \p\subseteq \hat \cR$ (\ref{4.5} and \ref{4.5l}) and of $ \hat  \cL ^{-} =\hat \cL\setminus \p,  \hat  \cR ^{-} =\hat \cR\setminus \p$ and observe that $| \hat  \cL ^{-} \setminus \cL^\circ|=| \hat  \cR ^{-} \setminus \cR^\circ|=b$ is the number of hook intersections in $\Phi^+$ of hooks centered at main conditions in $\p$. Indeed \ref{3.7} provides bijection:
\begin{equation}\label{6.1}
\perp:\quad  \hat  \cL ^{-} \setminus \cL^\circ \rightarrow  \hat  \cR ^{-} \setminus \cR^\circ : (r,i)\mapsto (r,i)^\perp =(s,j)
\end{equation}
for $(i,j), (r,s)\in \p$ with $1\leqslant j<s<i<r\leqslant n$.

In \ref{4.5l} we showed that $E^r=\End_{\C U} (\C \O_A^r)$ is given as ideal $\epsilon_{_A} \C U_{\hat \cL}$, where $ \epsilon_{_A}$ is the central primitive idempotent in $\C U_{_\cL}$ affording the trivial character on $U_{\cL^\circ}$ and the linear character $\theta_{A}$, defined in \ref{3.6} on  $U_{ \cL}/U_{\cL^\circ }\cong$ {\huge $\times$}$\!_{(a,b)\in \p}\, X_{ab}$.

For $(r,i)\in  \hat  \cL ^{-} \setminus \cL^\circ, (r,i)^\perp =(s,j)\in  \hat  \cR ^{-} \setminus \cR^\circ$, the map:
\begin{equation}\label{6.2}
\Upsilon_{ri}: \, X_{ri}\rightarrow X_{sj}: x_{ri}(\alpha)\mapsto x_{sj}(\alpha \sigma)
\end{equation}
with $\sigma=A_{rs}/A_{ij}\in \F_q^*$ is obviously an isomorphism of abelian groups. In \ref{4.7} we proved that 
\begin{equation}\label{6.3}
x_{ri}(\alpha)[A]=[A]x_{sj}(\alpha \sigma)=[A]\Upsilon_{ri}(x_{ri}(\alpha)).
\end{equation}
Fix a linear ordering on $ \hat  \cL ^{-} \setminus \cL^\circ$ and take all occurring products $\prod_{(a,b)\in \hat \cL\setminus \cL} x_{ab}(\alpha_{ab})$, $\alpha_{ab}\in \F_q$ for $(a,b)\in \hat \cL\setminus \cL$, in that ordering. Transfer this ordering to $ \hat  \cR ^{-} \setminus \cR^\circ$ by $\perp$. Note that $\epsilon_{_A} \C U_{\hat \cL}$ and $\epsilon_{_A}^\perp \C U_{\hat \cR}$ have bases
\begin{equation}\label{6.4}
\big\{\epsilon_{_A}\cdot\prod\nolimits_{(a,b)\in  \hat  \cL ^{-} \setminus \cL^\circ}x_{ab}(\alpha_{ab})\,|\, \alpha_{ab}\in \F_q \text{ for all } (a,b)\in  \hat  \cL ^{-} \setminus \cL^\circ \big\}
\end{equation}
respectively
\begin{equation}\label{6.5}
\big\{\epsilon_{_A}^\perp\cdot\prod\nolimits_{(a,b)\in  \hat  \cR ^{-} \setminus \cR^\circ}x_{ab}(\alpha_{ab})\,|\, \alpha_{ab}\in \F_q \text{ for all } (a,b)\in  \hat  \cR ^{-} \setminus \cR^\circ \big\}.
\end{equation}
thus 
\begin{equation}\label{6.6}
\Upsilon: \epsilon_{_A} \C U_{\hat \cL}\rightarrow\epsilon_{_A}^\perp \C U_{\hat \cR} 
\end{equation}
which sends $\epsilon_{_A}\cdot\prod\nolimits_{(a,b)\in  \hat  \cL ^{-} \setminus \cL^\circ}x_{ab}(\alpha_{ab})$ to  $\epsilon_{_A}^\perp\cdot\prod\nolimits_{(a,b)\in  \hat  \cL ^{-} \setminus \cL^\circ}\Upsilon_{ab}(x_{ab}(\alpha_{ab}))$
in $\epsilon_{_A}^\perp \C U_{\hat \cR}$ is a bijection. We have
\begin{Prop}\label{6.7}
Let $\p$ be hook disconnected. Then $\Upsilon: \epsilon_{_A} \C U_{\hat \cL}\rightarrow\epsilon_{_A}^\perp \C U_{\hat \cR} $ is a $\C$-algebra isomorphism from $E^r=\End_\C U (\C \O_A^r)$ into the subalgebra $\epsilon_A^\perp \C U_{\hat \cR}$ of $\C U$, such that for all $x\in \epsilon_{_A} \C U_{\hat \cL}$ we have:
\[x[A]=[A]\Upsilon(x).\]
\end{Prop}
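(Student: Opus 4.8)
The plan is to derive everything from the intertwining identity $x[A]=[A]\Upsilon(x)$: I would first establish this for all $x\in E^r=\epsilon_{_A}\C U_{\hat\cL}$, and then, using that an endomorphism of the cyclic $\C U$-module $\C\O_A^r$ is determined by its value on the generator $[A]$, read off that $\Upsilon$ is multiplicative. That $\Upsilon$ is $\C$-linear, bijective onto $\epsilon_{_A}^\perp\C U_{\hat\cR}$, and unital is then immediate from the construction $(\ref{6.6})$.

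\textbf{Step 1 (the intertwining identity).} Since $\Upsilon$ is $\C$-linear and $(\ref{6.4})$ is a basis of $\epsilon_{_A}\C U_{\hat\cL}$, it suffices to verify $x[A]=[A]\Upsilon(x)$ for a basis vector $x=\epsilon_{_A}\cdot x_{a_1b_1}(\alpha_1)\cdots x_{a_mb_m}(\alpha_m)$, with the factors listed in the fixed linear order on $\hat\cL\setminus\cL$. Because $\epsilon_{_A}$ is central in $\C U_{\hat\cL}$ (\ref{4.6}) and $\epsilon_{_A}[A]=[A]$ (as $\epsilon_{_A}$ is the identity of $E^r$), while $[A]\epsilon_{_A}^\perp=[A]$ (since $U_{\cR^\circ}$ acts trivially and $U_\cR/U_{\cR^\circ}$ by $\theta_A$ on $[A]$ from the right, by the right-hand analogue of \ref{3.6}), the claim for $x$ reduces to $x_{a_1b_1}(\alpha_1)\cdots x_{a_mb_m}(\alpha_m)[A]=[A]\,\Upsilon_{a_1b_1}(x_{a_1b_1}(\alpha_1))\cdots\Upsilon_{a_mb_m}(x_{a_mb_m}(\alpha_m))$. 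I prove this by induction on $m$: splitting off the leftmost factor, the inductive hypothesis rewrites $x_{a_2b_2}(\alpha_2)\cdots x_{a_mb_m}(\alpha_m)[A]$ as $[A]\,\Upsilon_{a_2b_2}(x_{a_2b_2}(\alpha_2))\cdots\Upsilon_{a_mb_m}(x_{a_mb_m}(\alpha_m))$, and applying $(\ref{6.3})$ to the \emph{original} idempotent $[A]$ gives $x_{a_1b_1}(\alpha_1)[A]=[A]\,\Upsilon_{a_1b_1}(x_{a_1b_1}(\alpha_1))$; associativity in $\C U$ then reassembles the product on the right of $[A]$ in the same linear order, which is precisely the order used to define the basis $(\ref{6.5})$ via $\perp$. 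Thus $x[A]=[A]\Upsilon(x)$ for every basis vector, hence for all $x\in E^r$; the case $m=0$ records in passing that $\Upsilon(\epsilon_{_A})=\epsilon_{_A}^\perp$.

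\textbf{Step 2 ($\Upsilon$ is a $\C$-algebra isomorphism).} By $(\ref{6.6})$, $\Upsilon$ carries the basis $(\ref{6.4})$ bijectively onto the basis $(\ref{6.5})$ of $\epsilon_{_A}^\perp\C U_{\hat\cR}$, which is a subalgebra of $\C U$ since $\epsilon_{_A}^\perp$ is a central idempotent of $\C U_{\hat\cR}$ (\ref{4.6}); so $\Upsilon$ is a $\C$-linear bijection, and it is unital by Step 1. For multiplicativity let $x,y\in E^r$. Regarding $x$ as the endomorphism of $\C\O_A^r$ given by left multiplication by $x$ (\ref{4.5}) and noting $[A]\Upsilon(y)=y[A]\in\C\O_A^r$, Step 1 gives $(xy)[A]=x(y[A])=x([A]\Upsilon(y))=(x[A])\Upsilon(y)=[A]\,\Upsilon(x)\Upsilon(y)$, while also $(xy)[A]=[A]\,\Upsilon(xy)$, both $\Upsilon(xy)$ and $\Upsilon(x)\Upsilon(y)$ lying in $\epsilon_{_A}^\perp\C U_{\hat\cR}$. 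Now $z\mapsto[A]z$ is injective on $\epsilon_{_A}^\perp\C U_{\hat\cR}$: by \ref{4.5} the assignment $x\mapsto(v\mapsto xv)$ identifies $\epsilon_{_A}\C U_{\hat\cL}$ with $\End_{\C U}(\C\O_A^r)$, so $x\mapsto x[A]$ is injective on $E^r$, and composing with the linear bijection $\Upsilon^{-1}$ gives the claim. Hence $\Upsilon(xy)=\Upsilon(x)\Upsilon(y)$, so $\Upsilon$ is a $\C$-algebra isomorphism of $E^r$ onto $\epsilon_{_A}^\perp\C U_{\hat\cR}$ satisfying $x[A]=[A]\Upsilon(x)$ for all $x$.

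\textbf{Expected obstacle.} The delicate point is the order-bookkeeping in Step 1: one must push the product of root-subgroup elements through $[A]$ one factor at a time, always against the \emph{fixed} idempotent $[A]$ via $(\ref{6.3})$, so that the resulting product of $\Upsilon_{ab}$-images comes out in exactly the order fixed for the basis $(\ref{6.5})$ — the elements $\Upsilon_{ab}(x_{ab}(\cdot))$ generally do not commute in $U_{\hat\cR}$, so any permutation of factors would break the identification with a basis vector. Hook-disconnectedness enters only through the setup, being precisely what makes $\perp$ in $(\ref{6.1})$, the isomorphisms $\Upsilon_{ri}$ in $(\ref{6.2})$, and hence $\Upsilon$ in $(\ref{6.6})$ well defined.
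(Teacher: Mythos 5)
Your proof is correct, but it follows a genuinely different route from the paper's. The paper proves multiplicativity of $\Upsilon$ at the level of group presentations: since each $\Upsilon_{ab}$ is a homomorphism on its root subgroup, Chevalley's commutator formula reduces the claim to checking that $\Upsilon$ preserves the relations $[x_{ar}(\alpha),x_{ri}(\beta)]=x_{ai}(\alpha\beta)$ modulo $(1-\epsilon_{_A})\C U_{\hat\cL}$, which is then done by an explicit case analysis ($i<b$ versus $i>b$, with the telescoping identity $\frac{A_{ab}}{A_{ij}}=\frac{A_{ab}}{A_{rs}}\cdot\frac{A_{rs}}{A_{ij}}$ making the scalars match); hook-disconnectedness enters there precisely to exclude the case $b=i$, where the commutator would land on a main condition (the obstruction discussed in Remark \ref{6.9}). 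You instead establish the intertwining identity $x[A]=[A]\Upsilon(x)$ first, by a clean induction that only invokes (\ref{6.3}) factor by factor against the fixed idempotent $[A]$, and then transport the algebra structure: multiplicativity follows from $[A]\Upsilon(xy)=[A]\Upsilon(x)\Upsilon(y)$ together with the injectivity of $z\mapsto [A]z$ on $\epsilon_{_A}^\perp\C U_{\hat\cR}$, which you correctly reduce via $\Upsilon^{-1}$ to the faithfulness of left multiplication of $E^r=\epsilon_{_A}\C U_{\hat\cL}$ on the cyclic generator $[A]$ (Theorem \ref{4.5}); one could equally note that the basis (\ref{6.5}) is carried by $z\mapsto[A]z$ to nonzero multiples of the distinct idempotents in $\O_A^l\cap\O_A^r$. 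What the paper's argument buys is an explicit, relation-level explanation of where hook-disconnectedness is essential and what must be modified in the connected case; what your argument buys is brevity and conceptual clarity (transport of structure along a faithful cyclic action), with no case analysis and no appeal to the commutator formula. Your closing remark correctly identifies the one genuinely delicate bookkeeping point, namely that pushing factors through $[A]$ one at a time preserves the chosen linear order, so the result is literally the basis vector (\ref{6.5}) prescribed by $\perp$.
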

\begin{proof}
Note that $\epsilon_{_A}$ and $\epsilon_{_A}^\perp$ are central in $\C U_{\hat \cL}$ and $\C U_{\hat \cR}$ respectively by \ref{4.6}.  Moreover the restriction $\Upsilon_{ab}$ of $\Upsilon$ to $X_{ab}$ for $ (a,b)\in  \hat  \cL ^{-} \setminus \cL^\circ$, is a group homomorphism by (\ref{6.2}). Thus by Chevalley's commutator formula (\cite{carter}, 5.2.2),  it suffices to check that $\Upsilon$ preserves commutator relations modulo $U_{\cL^\circ}$. More precisely, using \ref{5.4},
we have to show that $\Upsilon$ respects the commutator relation of the form:
\begin{equation}\label{6.8}
[x_{ar}(\alpha), x_{ri}(\beta)]=x_{ai}(\alpha\beta) \text{ modulo } (1-\epsilon_{_A}) \C U_{\hat \cL}, \,\forall\, \alpha, \beta\in \F_q
\end{equation}
for $(a,r), (r,i)\in  \hat  \cL ^{-} $. So let $1\leqslant s<b<r<a\leqslant n$, $1\leqslant j<s<i<r\leqslant n$ such that  $(a,b),(r,s),(i,j)\in \p$ and $\{(r,b)\}=h_{rs}\cap h_{ab}$, $\{(i,s)\}=h_{ij}\cap h_{rs}$, (compare example \ref{4.4}). Then  $(a,r)^\perp=(b,s)$ and $(r,i)^\perp=(s,j)$.
We distinguish two cases:
\medskip

{\bf Case 1:} Suppose $i<b$. Then $(a,i)$ is to the left of main condition $(a,b)$ in row $a$ and hence contained in $\cL^\circ$. So $X_{ai}\subseteq U_{\cL^\circ}$, and $x_{ai}(\alpha\beta)=(1)\mod (1-\epsilon_{_A})\C U_{\hat \cL}$, since $x_{ai}(\alpha \beta)\epsilon_{_A}=\epsilon_{_A}x_{ai}(\alpha\beta)=\epsilon_{_A}$. Now, since $i<b$, the position $(b,j)$ in column $j$ is below $(i,j)$, and hence $X_{bj}\subseteq U_{\cR^\circ}$. We conclude
\[
[\Upsilon(X_{ar}), \Upsilon(X_{ri})]=[X_{bs}, X_{sj}]=X_{bj}=(1) \mod (1-\epsilon_{_A}^\perp) \C U_{\hat \cR}
\]
proving that $\Upsilon$ preserves commutators in this case.
\smallskip

Note that the case $b=i$ cannot occur, since then $(a,i)=(a,b)\in \p$. Since $\p$ is hook disconnected, $ \hat  \cL ^{-} $ is closed and hence $(a, i)\in   \hat  \cL ^{-} =\hat \cL\setminus \p$.
 \smallskip

{\bf Case 2:} $i>b$. Thus $(a,i)$ is in row $a$ strictly to the right of $(a,b)\in \p$, and hence is not contained in $\cL=\cL^\circ \cup \p$. Moreover $X_{ai}$ acting from the left on $[A]$ will change position $(i,b)$ which is the hook intersection of the hooks $h_{ab}$ and $h_{ij}$. On the other hand $X_{bj}$ acting from the right on $[A]$ changes only position $(i,b)$ in $[A]$ and hence $(a,i)^\perp =(b,j)\in \hat \cR\setminus \cR$. By (\ref{6.2}) $\Upsilon (x_{ai}(\gamma))=x_{bj}(\gamma \frac{A_{ab}}{A_{ij}})$ for all $\gamma\in\F_q$. Similarly,
$\Upsilon(x_{ar}(\alpha))=x_{bs}(\alpha \frac{A_{ab}}{A_{rs}})$ and $\Upsilon(x_{ri}(\beta))=x_{sj}(\beta \frac{A_{rs}}{A_{ij}})$
 and hence
\begin{eqnarray*}
\Upsilon[x_{ar}(\alpha), x_{ri}(\beta)]&=& \Upsilon(x_{ai}(\alpha\beta))=x_{bj}(\alpha\beta \frac{A_{ab}}{A_{ij}})\\&=& x_{bj}(\alpha \frac{A_{ab}}{A_{rs}}\beta \frac{A_{rs}}{A_{ij}})=[x_{bs}(\alpha \frac{A_{ab}}{A_{rs}}), x_{sj}(\beta \frac{A_{rs}}{A_{ij}})]\\
&=& [\Upsilon(x_{ar}(\alpha)), \Upsilon(x_{ri}(\beta))]
\end{eqnarray*}
as desired.
\end{proof}

\begin{Remark}\label{6.9}
For hook connected main conditions $\p$, it can be shown that a similar $\C$-algebra isomorphism $\Upsilon: \epsilon_{_A} \C U_{\hat \cL}\rightarrow\epsilon_{_A}^\perp \C U_{\hat \cR} $ exists. It acts on root subgroups $X_{ab}, (a,b)\in \hat \cL\setminus\cL^\circ$ in the same fashion. However the case $b=i$ in the proof of proposition \ref{6.7} can occur which forces for sequences of hook connected conditions a non trivial action on root subgroups $X_{ij}$ with $(i,j)\in \p$ as well, permuting those, to take care of the case that a commutator subgroup meets root subgroups at main conditions. To keep the character $\theta_A$ stable one needs to adjust the entry $\alpha$ for $x_{ij}(\alpha)$, $(i,j)\in \p$, by factors derived from quotients of entries at main conditions. This extension to the general case however will not be needed in this paper.
\end{Remark}

Let $\p\subseteq \Phi^+$ be again hook disconnected. Recall from \ref{5.4} that the endomorphism rings $E^r, E^l$ of $\C \O_A^r$ and $\C \O_A^l$ of \ref{4.5} and \ref{4.5l} respectively can be identified with the group algebras $\C H$ and $\C H^\perp$, setting $H=H_\p=U_{ \hat  \cL ^{-} }/U_{\cL^\circ}\cong U_{\hat \cL}/U_{\cL}$ and $H^\perp=U_{ \hat  \cR ^{-} }/U_{\cR^\circ}\cong U_{\hat \cR}/U_{\cR}$, where again $ \hat  \cL ^{-} =\hat \cL\setminus \p,  \hat  \cR ^{-} =\hat \cR\setminus \p$ . Obviously the algebra isomorphism $\Upsilon$ of \ref{6.7} induces a group isomorphism $H\rightarrow H^\perp$. For $x\in \C H$ we denote now $\Upsilon (x)\in \C H^\perp$ by $x^\perp$.\\

If $S$ is an irreducible $\C H^\perp$-module, we can extend the $H^\perp$-action on $S$ by \ref{5.4} to $U_{\hat \cR}/U_{\cR^\circ}=H^\perp\times U_{\cR}/U_{\cR^\circ}$ by $\theta_A: U_{\cR}/U_{\cR^\circ}=${\huge $\times$}$_{(a,b)\in\p} X_{ab}\rightarrow \C^*$ defined in \ref{3.6}, and then lift the resulting $U_{\hat \cR}/U_{\cR^\circ}$-module to $U_{\hat \cR}$ by letting $U_{\cR ^\circ}$ act  trivially on it. The corresponding $\C U_{\hat \cR}$-module is now denoted by $\hat S_A=\hat S$. Let $\Irr(\C H^\perp)$ be a complete set of  non isomorphic irreducible $\C H^\perp$-modules.

\medskip

Recall that $|\C \O_A^r|=q^a$, where $a=|\Phi^+\setminus \cR|$, since $U_{\cR}=\Pstab[A]$ by \ref{3.6}. Since $| \hat  \cR ^{-} \setminus \cR^\circ|=|\hat \cR\setminus \cR|=b$ is the number of hook intersections in $\Phi^+$ of hooks centered in $\p$, we have $|\Phi^+\setminus\hat \cR|=a-b$ and hence $[U:U_{\hat \cR}]=q^{a-b}$. Similarly $[U:U_{\hat \cL}]=q^{a-b}$.

\begin{Remark}
In general $\Upsilon$ defined in \ref{6.9} shifts the action on the verge idempotents from left to right. As a consequence all  irreducible constituents of $\C \O_A^r$ are induced from precisely the  irreducible modules of $U_{\hat \cR}$ in the sum of Wedderburn components attached to the central idempotent $\epsilon^\perp_A$. This result has been observed as well by Tung Le in \cite{le} but was shown there by different methods.  In the special case of hook disconnected main conditions it follows immediately from proposition \ref{6.7}:
\end{Remark}

\begin{Cor}\label{6.10}
Let $[A]\in\E$ be a verge such that $\main[A]=\p\subseteq  \Phi^+$ is hook disconnected. Let $\cR^\circ\subseteq \cR\subseteq \hat \cR\subseteq \Phi^+$ be defined as above and $ \hat  \cR ^{-} =\hat \cR\setminus \p, H^\perp=U_{ \hat  \cR ^{-} }/U_{\cR^\circ}\cong U_{\hat \cR}/U_{\cR^\circ}$. If $S$ is an irreducible $\C H^\perp$-module, the lift $\hat S$ of $S$ to $U_{\hat \cR}$ is defined as above. Then we have:
\begin{enumerate}
\item [1)] $\hat S\cong [A]S$ and $[A]S\C U$ is an irreducible constituent of $\C \O_A^r$ of dimension $q^{a-b+m}$ setting $q^m=\dim_\C S$.
\item[2)] $[A] S \C U\cong \Ind_{U_{\hat \cR}}^U [A]  \hat S\cong \Ind_{U_{\hat \cR}}^U \hat S$.
\item[3)] $\C \O_A^r=\sum_{S\in \Irr(\C H^\perp)} (\dim_\C S) [A] S \C U=\bigoplus_{S\in \Irr(\C H^\perp)}(\dim_{\C} S) \Ind_{U_{\hat \cR}}^U \hat S$
is the decomposition of $\C \O_A^r$ into a direct sum of irreducible $\C U$-modules.
\end{enumerate}
\end{Cor}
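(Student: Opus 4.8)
The plan is to transport both the module $\C\O_A^r$ and its endomorphism ring to the right-hand side via Proposition~\ref{6.7}, apply the double-centralizer theorem, and then read everything off from the group-algebra structure $\epsilon_{_A}^\perp\C U_{\hat\cR}\cong\C H^\perp$ provided by Proposition~\ref{5.4}; the dimension count in Theorem~\ref{3.10} will then force the various natural surjections in sight to be isomorphisms. Hook-disconnectedness is used exactly once, in Proposition~\ref{5.4}.

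First I would set up the transport. Recall $\C\O_A^r=[A]\C U$ is a semisimple right $\C U$-module and $E^r=\End_{\C U}(\C\O_A^r)$ acts from the left; by Proposition~\ref{6.7}, $\Upsilon$ is a $\C$-algebra isomorphism of $E^r$ onto the subalgebra $\epsilon_{_A}^\perp\C U_{\hat\cR}$ of $\C U$, with $x[A]=[A]\Upsilon(x)$. The key elementary observation is that right multiplication by $[A]$ on $\epsilon_{_A}^\perp\C U_{\hat\cR}$ is injective: if $[A]x=0$ then $\Upsilon^{-1}(x)$ kills $[A]$, hence kills all of $[A]\C U=\C\O_A^r$, so $\Upsilon^{-1}(x)=0$ and $x=0$. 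Next, by Proposition~\ref{5.4} one has $U_{\hat\cR}/U_{\cR^\circ}=H^\perp\times U_\cR/U_{\cR^\circ}$, and since $\epsilon_{_A}^\perp$ is the central idempotent of $\C U_\cR$ with trivial $U_{\cR^\circ}$-action and $\theta_A$ on $U_\cR/U_{\cR^\circ}$, the algebra $\epsilon_{_A}^\perp\C U_{\hat\cR}$ is isomorphic to $\C H^\perp$, and as a right $\C U_{\hat\cR}$-module it equals $\bigoplus_{S\in\Irr(\C H^\perp)}(\dim_\C S)\,\hat S$, the lift of the regular $\C H^\perp$-module. Hence for each $S$ a primitive idempotent $e_S$ of this algebra lying in the block of $S$ satisfies $e_S\C U_{\hat\cR}\cong\hat S$, and by the injectivity just noted $[A]S:=[A]e_S\C U_{\hat\cR}\cong\hat S$ as $\C U_{\hat\cR}$-modules; this is the first half of 1).

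For the rest I would combine the double-centralizer theorem with an induction identity. Put $f_S=\Upsilon^{-1}(e_S)$, a primitive idempotent of $E^r$. Then $M_S:=f_S\C\O_A^r=f_S[A]\C U=[A]e_S\C U$ is an irreducible $\C U$-module, the assignment $S\mapsto M_S$ is a bijection onto the isomorphism classes of irreducible constituents of $\C\O_A^r$, and $M_S$ occurs in $\C\O_A^r$ with multiplicity $\dim_\C(E^r f_S)=\dim_\C(\C H^\perp e_S)=\dim_\C S=:q^m$; therefore Theorem~\ref{3.10} forces $\dim_\C M_S=q^{a-b+m}$, completing 1). Next, $M_S=[A]e_S\C U=\big([A]e_S\C U_{\hat\cR}\big)\C U$, and the multiplication map $\big([A]e_S\C U_{\hat\cR}\big)\otimes_{\C U_{\hat\cR}}\C U\to M_S$ is surjective between spaces of equal dimension $q^m\cdot[U:U_{\hat\cR}]=q^m q^{a-b}=q^{a-b+m}$ (using freeness of $\C U$ over $\C U_{\hat\cR}$), hence an isomorphism; combined with $[A]e_S\C U_{\hat\cR}\cong\hat S$ this gives $M_S\cong\Ind_{U_{\hat\cR}}^U[A]\hat S\cong\Ind_{U_{\hat\cR}}^U\hat S$, which is 2). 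Summing $\C\O_A^r=\bigoplus_S M_S^{\oplus\dim_\C S}$ then yields $\C\O_A^r=\bigoplus_{S\in\Irr(\C H^\perp)}(\dim_\C S)\,\Ind_{U_{\hat\cR}}^U\hat S$, i.e.\ 3). (One could avoid choosing the $f_S$ by instead proving directly $\C\O_A^r\cong\Ind_{U_{\hat\cR}}^U([A]\C U_{\hat\cR})$ and $[A]\C U_{\hat\cR}\cong\epsilon_{_A}^\perp\C U_{\hat\cR}\cong\bigoplus_S(\dim_\C S)\hat S$ by dimension counts, and then deducing irreducibility and pairwise non-isomorphism of the $\Ind_{U_{\hat\cR}}^U\hat S$ from $\dim_\C\End_{\C U}(\C\O_A^r)=|\O_A^l\cap\O_A^r|=q^b=\sum_S(\dim_\C S)^2$, using Proposition~\ref{3.1}.)

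The only real work is the bookkeeping: keeping straight, through $\Upsilon$, that the left $E^r$-action on $\C\O_A^r$ is implemented by right multiplication inside $\epsilon_{_A}^\perp\C U_{\hat\cR}$, and matching up the three dimensions $q^b=\dim_\C E^r=|\O_A^l\cap\O_A^r|$, $q^{a-b}=[U:U_{\hat\cR}]$ and $q^a=|\O_A^r|$ so that every natural surjection is forced to be an isomorphism. All the substantive inputs — Propositions~\ref{6.7} and \ref{5.4}, Theorem~\ref{3.10}, Proposition~\ref{3.1}, and the double-centralizer theorem — are already in place.
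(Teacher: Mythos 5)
Your proposal is correct and follows essentially the same route as the paper's proof: both transport the endomorphism ring to $\epsilon_{_A}^\perp\C U_{\hat\cR}\cong\C H^\perp$ via Proposition \ref{6.7}, identify $[A]S$ with $\hat S$ through the (injective) right multiplication by $[A]$ on $\C H^\perp$, obtain irreducibility and multiplicity from a primitive idempotent of $E^r$, invoke Theorem \ref{3.10} for the dimension, and get 2) and 3) from the index $[U:U_{\hat\cR}]=q^{a-b}$. The only cosmetic difference is that you spell out the injectivity of $x\mapsto[A]x$ and the tensor-product form of induction explicitly, where the paper appeals to the identification $[A]\C H^\perp\cong\C H^\perp_{\C H^\perp}$ and a direct dimension count.
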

\begin{proof}
Since $U_{\cR^\circ}$ acts trivially on $[A]$ and $U_{\cR}=\Pstab_U^r[A]$, we conclude that $[A]\C U_{ \hat  \cR ^{-} }=[A]\C H^\perp$ is isomorphic to the regular representation $\C H^\perp_{\C H^\perp}$ of $\C H^\perp$. Since $U_{\cR}/U_{\cR^\circ}$ acts on $[A]$ by the linear character $\theta_A$ we conclude $[A]S\cong [A]\hat S\cong \hat S$. The preimage $\Upsilon^{-1}$ of $S$ under $\Upsilon: \C H\rightarrow \C H^\perp$ is an irreducible $\C H$-module generated by some primitive idempotent $\epsilon_{_S}\in \C H$ say. Thus $\epsilon_{_S} \C H[A]=[A] S$ and hence $[A]S \C U=\epsilon_{_S} [A]\C U=\epsilon_{_S} \C \O_A^r$ is an irreducible constituent of $\C \O_A^r$. Now by Isaacs' theorem (\cite{Isaacsq}, Theorem A) $\dim_\C S$ is a power of $q$, say $q^m$. Thus $\dim_\C  \epsilon_{_S}\C H=q^m$ too, and $\dim_\C \epsilon_{_S} \C \O_A^r=\dim_\C [A] S\C U=q^{a-b+m}$, since $\epsilon_{_S} \C H$ has multiplicity $q^m$ in $\C H_{\C H}$, which may be considered as endomorphism algebra of $\C \O_A^r$. Now 1) follows from \ref{3.10} and \ref{3.10n}. Thus multiplying  up the $\C U_{\hat\cR}$-submodule $[A]S=[A]\hat S$ of $\C \O_A^r$ to $\C U$ produces an increase in dimension by a factor $q^{a-b}$ which incidentally is the index of $U_{\hat \cR}$ in $U$. This implies 2) and hence 3) holds as well.
\end{proof}

In  \ref{6.10} let the irreducible $\C H^\perp$-module be one dimensional and hence afford a linear character $\lambda_S: H\rightarrow \C^*$ which extends to a linear character $\lambda_S\cdot \theta_A: U_{\hat\cR}\setminus U_{\cR^\circ}\rightarrow \C^*$ and hence to a linear character $\hat \lambda_S: U_{\hat \cR}\rightarrow \C^*$. Then we have precisely in the situation of theorem \ref{5.7} and hence have constructed the monomial sources of the irreducible $\C U$-modules which have minimal dimension in their verge orbit module:

\begin{Cor}\label{6.11}
Let $\mathcal S\leqslant \C \O_A^r$ be of minimal dimension. Then $[A]\in \E$ has hook disconnected main conditions $\p\subseteq \Phi^+$.
Moreover there exists a linear character $\lambda: U_{\hat \cR}\rightarrow \C^*$ having $U_{\cR^\circ}$ in its kernel, such that the restriction of $\lambda$ to $U_{\cR}$ is $\theta_A$ and    $(U_{\hat \cR}, \lambda)$ is a monomial source of $\mathcal S$.
\end{Cor}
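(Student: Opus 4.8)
The plan is to read both assertions off the structure theory of $\C\O_A^r$ built up above, with Corollary~\ref{6.10} carrying most of the load.

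\emph{The hook disconnected claim.} If $\mathcal S\leqslant\C\O_A^r$ has minimal dimension $q^{a-b}$, then $\C\O_A^r$ possesses an irreducible constituent of minimal dimension, and by Theorem~\ref{5.7} this forces $\p=\main[A]$ to be hook disconnected. Consequently all of \ref{5.4}--\ref{6.10} applies; in particular, via the central idempotent $\epsilon_{_A}$ and the algebra isomorphism $\Upsilon$ of \ref{6.7}, the endomorphism ring $E^r=\End_{\C U}(\C\O_A^r)$ is identified with the subalgebra $\epsilon_{_A}^\perp\C U_{\hat\cR}$ of $\C U$, which in turn is isomorphic to $\C H^\perp$ for $H^\perp=U_{\hat\cR^{-}}/U_{\cR^\circ}$, where $\hat\cR^{-}=\hat\cR\setminus\p$.

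\emph{Identifying the source.} Next I would invoke Corollary~\ref{6.10}. By part 3) there, $\C\O_A^r=\bigoplus_{S\in\Irr(\C H^\perp)}(\dim_\C S)\,\Ind_{U_{\hat\cR}}^U\hat S$, so $\mathcal S\cong\Ind_{U_{\hat\cR}}^U\hat S$ for some $S\in\Irr(\C H^\perp)$; by part 1), $\dim_\C\mathcal S=q^{a-b+m}$ with $q^m=\dim_\C S$, and $\dim_\C S$ is a power of $q$ by Isaacs' theorem (as used already in \ref{6.10}). Minimality of $\dim_\C\mathcal S$ hence forces $m=0$, so $S$ is one dimensional, given by a linear character $\lambda_S\colon H^\perp\to\C^*$. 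Its lift $\hat S$ to $U_{\hat\cR}$ --- obtained by extending $\lambda_S$ across $U_{\hat\cR}/U_{\cR^\circ}\cong H^\perp\times U_{\cR}/U_{\cR^\circ}$ (using \ref{5.4}) by the character $\theta_A$ of \ref{3.6} on the second factor, and then inflating with trivial $U_{\cR^\circ}$-action --- is again one dimensional, say $\hat S=\C_\lambda$, where $\lambda\colon U_{\hat\cR}\to\C^*$ has $U_{\cR^\circ}$ in its kernel, restricts on $U_{\cR}$ to $\theta_A$, and restricts on $H^\perp$ to $\lambda_S$. Then $\mathcal S\cong\Ind_{U_{\hat\cR}}^U\C_\lambda$, so $(U_{\hat\cR},\lambda)$ is a monomial source of $\mathcal S$; since $\hat\cR$ is a closed subset of $\Phi^+$ by \ref{4.5l}, $U_{\hat\cR}$ is a pattern subgroup, which also yields that $\mathcal S$ is well-induced.

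\emph{The main obstacle.} The genuine work is already contained in the cited results --- the construction of $\Upsilon$ in \ref{6.7} and the decomposition \ref{6.10}. What remains here is routine: one must extract $m=0$ from the minimality of $\dim_\C\mathcal S$ (for which one needs $\dim_\C S$ to be a power of $q$), and one must check that the lift of a one-dimensional $\C H^\perp$-module stays one dimensional, so that $(U_{\hat\cR},\lambda)$ is genuinely monomial and $\lambda$ has the asserted kernel and restriction to $U_{\cR}$. In short, the corollary repackages \ref{6.10} in the special case $m=0$.
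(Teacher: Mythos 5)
Your proposal is correct and follows essentially the same route as the paper: Theorem~\ref{5.7} gives the hook disconnectedness, and specializing Corollary~\ref{6.10} to a one-dimensional $S\in\Irr(\C H^\perp)$ (forced by minimality, since $\dim_\C\mathcal S=q^{a-b}\dim_\C S$) yields the linear character $\lambda=\hat\lambda_S$ of $U_{\hat\cR}$ with the stated kernel and restriction, exactly as in the paragraph preceding the corollary in the paper.
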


 This corollary shows in particular that irreducible characters of $U$ of minimal degree in their supercharacters are induced from linear characters of pattern subgroups. Recall that  by \cite{halasi} every irreducible character of $U$ is introduced from a linear character of some $\F_q$-algebra subgroup of $U$. However, as shown by Evseev in \cite{Evseev} not every irreducible character of $U$ needs to be induced from a linear character of some pattern subgroup of $U$. Those, which are, are called ``well-induced'' by Evseev and our result shows that irreducible characters of minimal degree in their supercharacters are well-induced.


\providecommand{\bysame}{\leavevmode ---\ }
\providecommand{\og}{``} \providecommand{\fg}{''}
\providecommand{\smfandname}{and}
\providecommand{\smfedsname}{\'eds.}
\providecommand{\smfedname}{\'ed.}
\providecommand{\smfmastersthesisname}{M\'emoire}
\providecommand{\smfphdthesisname}{Th\`ese}


\begin{thebibliography}{LYLE}
\addcontentsline{toc}{chapter}{Bibliography}

\bibitem{andre1}
{\scshape C. A. M. Andr\'{e}}, {\og Basic characters of the
unitriangular group\fg},  \emph{J. Algebra},  \textbf{175},
(1995), 287-319.


 \bibitem{carter}
 {\scshape R. W. Carter}, {\og Simple groups of Lie type \fg}, John Wiley \& sons, \emph{London, New York, Sydney, Toronto}, (1972).

\bibitem{curtis}
{\scshape C. W. Curtis and I. Reiner}, {\og Methods of representation theory - with applications to finite groups and orders\fg}, \emph{Wiley Classics Library Edition, New York}, \textbf{vol. 2}, (1994).


\bibitem{super}
{\scshape P. Diaconis and I. M. Issacs}, {\og Supercharacters and superclasses for algebra groups\fg}, \emph{Trans. Amer. Math. Soc.},  \textbf{360(5)}, 2359--2392, (2008).


\bibitem{Evseev}
{\scshape A. Evseev}, {\og Reduction for characters of finite
algebra groups\fg},  \emph{J. Algebra},  \textbf{325}, (2011),
321-351.


\bibitem{guo}
{\scshape Q. Guo}, {\og On the $U$-module structure of the unipotent Specht modules of finite general linear groups},  preprint,  arXiv:1304.4370v2.

\bibitem{halasi}
{\scshape Z. Halasi}, {\og On the characters and commutators
of finite algebra groups\fg},  \emph{J. algebra},
\textbf{275}, (2004), 481-487.

\bibitem{higman}
{\scshape G. Higman}, {\og Enumerating $p$-groups I\fg},
\emph{Proc. London Math. Soc. },  \textbf{3}, (1960), 24-30.

\bibitem{huppert}
{\scshape B. Huppert}, {\og A remark on the character-degrees
of some p-groups \fg},  \emph{Arch. Math.},  \textbf{59},
(1992), 313-318.

\bibitem{Isaacsbook}
{\scshape I. M. Isaacs}, {\og Character theory of finite groups\fg},  \emph{Academic Press},  
(1976)

\bibitem{Isaacsq}
{\scshape I. M. Isaacs}, {\og Characters of groups associated with finite algebras\fg} , \emph{ J. Algebra},  \textbf{177}, (1995), 708-730.


\bibitem{Isaacs2}
{\scshape I. M. Isaacs}, {\og Counting characters of upper
triangular groups\fg},  \emph{J. Algebra},  \textbf{315},
(2007), 698-719.


\bibitem{markus}
{\scshape M. Jedlitschky}, {\og Decomposing Andr\'{e}-Neto supercharacters for
Sylow $p$-subgroups of type $D$ \fg}, \emph{PhD. Thesis, Univerit\"{a}t Stuttgart}, 2013


\bibitem{le}
{\scshape Tung Le}, {\og Supercharacters and pattern subgroups in the upper triangular groups\fg},  arXiv:0912.0385v2, (Aug. 6th,  2013).


\bibitem{lehrer}
{\scshape G. Lehrer}, {\og Discrete series and the unipotent
subgroup\fg},  \emph{Comp. Mathematical },  \textbf{28(1)},
(1974), 9-19.


\bibitem{yan}
{\scshape N. Yan}, {\og Representations of finite unipotent linear groups by the method of Clusters\fg}, arXiv:1004.2674v1, (2010)

\end{thebibliography}
\end{document}